\titleformat*{\subsection}{\bfseries\itshape}
\providecommand{\keywords}[1]
{
\noindent{\small
	\textbf{Keywords:} #1}
}
\providecommand{\msc}[1]
{
\noindent{\small
	\textbf{2010 MSC:} #1}
}
\newtheorem{theorem}{Theorem}[section]
\newtheorem{lemma}[theorem]{Lemma}
\newtheorem{corollary}[theorem]{Corollary}
\newtheorem{proposition}[theorem]{Proposition}
\newtheorem*{thm}{Theorem}
\newtheorem*{prop}{Proposition}
\theoremstyle{definition}
\newenvironment{remark}
{\pushQED{\qed}\examplex}
	{\popQED\endexamplex}
\newcommand{\R}{{\mathbb R}}
\newcommand{\N}{{\mathbb N}}
\newcommand{\C}{{\mathbb C}}
\newcommand{\K}{{\mathbb K}}
\newcommand{\cA}{\mathcal{A}}
\newcommand{\cB}{\mathcal{B}}
\newcommand{\cU}{\mathcal{U}}
\newcommand{\cD}{\mathcal{D}}
\newcommand{\cF}{\mathcal{F}}
\newcommand{\cJ}{\mathcal{J}}
\newcommand{\cV}{\mathcal{V}}
\newcommand{\bO}{{\mathcal{O}}}
\DeclareMathOperator{\ai}{Ai}
\DeclareMathOperator{\bi}{Bi}
\DeclareMathOperator{\ch}{ch}
\DeclareMathOperator*{\re}{Re}
\DeclareMathOperator*{\im}{Im}
\DeclareMathOperator{\dom}{\cD}
\newcommand{\abs}[1]{\left\lvert #1 \right\rvert}
\newcommand{\abss}[1]{\bigl\lvert #1 \bigr\rvert}
\newcommand{\norm}[1]{\left\lVert #1 \right\rVert}
\newcommand{\inner}[2]{\left\langle#1,#2\right\rangle}
\newcommand{\cc}[1]{\overline{#1}}
\newcommand{\mb}[1]{\heavysymbol{#1}}
\newcommand{\defeq}{\mathrel{\mathop:}=}
\newcommand{\sA}{\cA}
\newcommand{\bsA}{\mb{\mathfrak{A}}}
\newcommand{\om}{\omega}
\newcommand{\bom}{\underline{\mb{\omega}}}
\title {\bf Asymptotics of the spectral data of perturbed Stark operators on the half-line
		with mixed boundary conditions}
\author[1]{Julio H. Toloza%
		\thanks{\faEnvelopeO\, {julio.toloza@uns.edu.ar}}%
			}
\author[2]{Alfredo Uribe%
		\thanks{\faEnvelopeO\, {alfredo.uribe@uacm.edu.mx}}%
			}
\affil[1]{Instituto de Matemática (INMABB)\\
		 Departamento de Matemática\\
		 Universidad Nacional del Sur (UNS) - CONICET\\
		 Bahía Blanca\\
		 Argentina}
\affil[2]{Academia de Matemáticas\\
		 Universidad Autónoma de la Ciudad de México\\
		 Calz. Ermita Iztapalapa S/N\\
		 Col. Lomas de Zaragoza 09620, Ciudad de México\\
		 Mexico}
\date{}
\begin{document}

\maketitle

\begin{abstract}
We obtain sharp asymptotic formulas for the eigenvalues and norming constants of
Sturm-Liouville operators associated with the differential expression
\[
-\frac{d^2}{dx^2} + x + q(x),
\quad
x\in [0,\infty),
\]
together with the boundary condition $\varphi'(0) - b\varphi(0) =0$, $b\in\R$, where
\[
q\in \left\{ p\in L^2_\R(\R_+,(1+x)^r dx) : p'\in L^2_\R(\R_+,(1+x)^r dx)\right\}
\]
and $r>1$.
\end{abstract}

\bigskip
\keywords{Stark operators, spectral theory, asymptotic analysis, Airy functions}

\msc{
34E10,	
34L15,	
81Q05,	
81Q10	
}


\section{Introduction and main results}
\label{sec:introduction}

Throughout this paper, we use the standard notation $\varphi' \defeq \partial_x \varphi$ and
$\dot{\varphi} \defeq \partial_z \varphi$.
By $\log$ we always mean the principal branch of the logarithm; consequently, $u^{q} \defeq
e^{q\log u}$ so, in particular, $u^{1/2} = \sqrt{u}$ whenever $u>0$. The inner product and norm in
$L^2(\R_+)$ are denoted $\inner{\cdot}{\cdot}_2$ and $\norm{\cdot}_2$, respectively.
Finally, in an order relation of the form $f(n) = \bO(g(n))$ we always assume $n\in\N$
and the limit $n\to\infty$.

Given $r>1$, let us define the Sobolev-type, real Hilbert space
\begin{equation*}
\bsA_r \defeq \left\{ q\in\sA_r\cap\text{AC}_\text{loc}([0,\infty)) : q'\in\sA_r\right\},
\quad \norm{q}_{\bsA_r}^2 \defeq \norm{q}_{\sA_r}^2 + \norm{q'}_{\sA_r}^2,
\end{equation*}
where
\begin{equation*}
\sA_r \defeq L^2_\R(\R_+,(1+x)^r dx),\quad
	\norm{q}_{\sA_r} \defeq \norm{q}_{L^2(\R_+,(1+x)^r dx)}.
\end{equation*}
This paper is concerned with the spectral analysis of self-adjoint operators associated
with the differential expression
\[
\tau_q \defeq -\frac{d^2}{dx^2} + x + q(x),
\quad
x\in [0,\infty),
\]
where $q\in\bsA_r$, $r>1$.
By standard theory (see e.g. \cite[Ch.~6]{weidmann}), $\tau_q$ is in the
limit-circle case at $0$ and in the limit-point case at $\infty$. Hence (the
closure of) the minimal operator $H_q'$ defined by $\tau_q$ is symmetric and has
deficiency indices $(1,1)$. The self-adjoint extensions of $H_q'$ are defined by imposing
the usual boundary condition at $x=0$. Namely, given $b\in\R\cup\{\infty\}$,
\begin{equation*}
\dom(H_{q,b})
	\defeq  \left\{\begin{gathered}
			\varphi\in L^2(\R_+) : \varphi,\varphi'\in\text{AC}_\text{loc}([0,\infty)),\
			\tau_q\varphi\in L^2(\R_+),
			\\
			\varphi'(0) - b\varphi(0) = 0\text{ if } b\in\R,\
			\varphi(0)=0 \text{ if } b=\infty
			\end{gathered}\right\},
\quad
H_{q,b}\varphi \defeq \tau_q\varphi.
\end{equation*}
Also, $H_{q,b}$ is semi-bounded from below. Moreover, it has
only simple, discrete spectrum, with a finite number of negative eigenvalues (if any).
As a side note, we observe that $\sA_r\subset L^1(R_+)$ whenever $r>1$ and
$\norm{q}_1\le (r-1)^{-1/2}\norm{q}_{\sA_r}$ \cite[Sec.~2]{toluri-2022}.

In what follows, we shall consider only the case $b\in\R$ since
the Dirichlet boundary condition ($b=\infty$) has been discussed elsewhere \cite{toluri-2022}.

Let $\psi(q,z,x)$ be the unique (up to a constant multiple) square-integrable solution to the
eigenvalue problem $\tau_q\varphi = z\varphi$, $z\in\C$. Let us define
\begin{equation*}
w(q,b,z) \defeq \psi'(q,z,0) - b\psi(q,z,0).
\end{equation*}
According to the Borg--Marchenko uniqueness theorem \cite{gesztesy},
$H_{q,b}$ can be uniquely determined from the spectral data consisting of the set of eigenvalues
\begin{equation*}
\left\{\lambda_n(q,b)\right\}_{n=1}^\infty
	= \left\{\lambda\in\R : w(q,b,\lambda) = 0\right\},
\end{equation*}
along with the set of (logarithmic) norming constants $\left\{\kappa_n(q,b)\right\}_{n=1}^\infty$
given by
\begin{equation}
\label{eq:norming-constants}
e^{\kappa_n(q,b)}
	\defeq
	  \frac{\abs{\psi(q,\lambda_n(q,b),0)}^2}{\norm{\psi(q,\lambda_n(q,b),\cdot)}^{2}_2}
	= \frac{\psi(q,\lambda_n(q,b),0)}{\dot{w}(q,b,\lambda_n(q,b))},
\end{equation}
where
the last expression in \eqref{eq:norming-constants} follows after applying the identity
\begin{equation}
\label{eq:the-identity}
\partial_x(\psi\,\dot{\psi}' - \psi'\dot{\psi}) = -\psi^2.
\end{equation}

Let us take a look at the case $q\equiv 0$.
The square-integrable solution to the equation $\tau_0\varphi=z\varphi$ is given by the Airy
function of the first kind $\ai$. Namely,
\begin{equation*}
\psi_0(z,x) = \sqrt{\pi}\ai(x-z),
\end{equation*}
where the inclusion of the constant $\sqrt{\pi}$ is merely for convenience.
The eigenvalues of $H_{b}\defeq H_{0,b}$ are therefore the solutions to the equation
$\ai'(-\lambda) - b \ai(-\lambda) = 0$. In Section~\ref{sec:prelim} we obtain asymptotic
formulas for the spectral
data of $H_b$ in terms of the zeros of $\ai'$ (the derivative of the function $\ai$) and $b\in\R$.
Recall that the standard notation for the zeros of $\ai'$ is $a_n'$ (here the prime is part of the notation
and does not denote a derivative), where where $a_{n+1}'<a_n'<0$ for every $n\in\N$ and
\begin{equation}
\label{eq:zeros-of-ai-prime}
- a_n' = \left(\tfrac32\pi(n-\tfrac34)\right)^{2/3} + \bO(n^{-4/3})
\end{equation}
(see e.g. \cite[\S 9.9(iv)]{nist}).
Thus, we have (see Prop.~\ref{prop:spectral-data-unperturbed-mixed-bc}):
\begin{prop}
Let $a_n'$ be the $n$-th zero of the function $\ai'$. Assume $b\in\R$. Then,
\begin{equation*}
\lambda_n(b)
		= - a_n' - \frac{b}{a_n'} + \bO(n^{-4/3})
\quad\text{and}\quad
\kappa_n(b)
	= - \log(-a_n') + \frac{b^2}{a_n'} + \bO(n^{-4/3}),
\end{equation*}
where the error terms are uniform on bounded subsets of $\R$.
\end{prop}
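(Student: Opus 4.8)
The plan is to treat the $q\equiv 0$ problem directly via the known asymptotics of the Airy function and its derivative. The eigenvalues $\lambda_n(b)$ are by definition the zeros of $w(0,b,\lambda)=\psi_0'(\lambda,0)-b\psi_0(\lambda,0)=\sqrt\pi\bigl(-\ai'(-\lambda)-b\ai(-\lambda)\bigr)$, i.e. the solutions of $\ai'(-\lambda)+b\,\ai(-\lambda)=0$. First I would recall the classical connection formula between $\ai$ and $\ai'$ near a zero of $\ai'$: since $\ai$ satisfies $\ai''(t)=t\,\ai(t)$, at $t=a_n'$ we have $\ai'(a_n')=0$ and $\ai''(a_n')=a_n'\,\ai(a_n')$, so locally $\ai'(a_n'+\epsilon)=a_n'\ai(a_n')\,\epsilon+\bO(\epsilon^2)$ while $\ai(a_n'+\epsilon)=\ai(a_n')+\bO(\epsilon^2)$. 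Writing $-\lambda_n(b)=a_n'+\epsilon_n$ and substituting into $\ai'(-\lambda_n)+b\,\ai(-\lambda_n)=0$ gives $a_n'\ai(a_n')\,\epsilon_n+b\,\ai(a_n')+\bO(\epsilon_n^2)=0$, hence $\epsilon_n=-b/a_n'+\bO(n^{-4/3})$ once we know $\epsilon_n=\bO(n^{-2/3})$ and $|a_n'|\sim (\tfrac32\pi n)^{2/3}$. This yields $\lambda_n(b)=-a_n'-b/a_n'+\bO(n^{-4/3})$. The quantitative control of the $\bO(\epsilon_n^2)$ Taylor remainder, uniformly in $n$, requires bounds on $\ai,\ai',\ai''$ on the interval between $a_n'$ and $-\lambda_n$; these follow from the standard asymptotic expansions of Airy functions (e.g. \cite[\S 9.7]{nist}), and the fact that $b$ ranges over a bounded set keeps $\epsilon_n$ of order $n^{-2/3}$ uniformly.

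For the norming constants, I would start from the second expression in \eqref{eq:norming-constants}, namely $e^{\kappa_n(b)}=\psi_0(\lambda_n(b),0)/\dot w(0,b,\lambda_n(b))$. Here $\psi_0(\lambda_n,0)=\sqrt\pi\,\ai(-\lambda_n)$ and $\dot w(0,b,\lambda)=\partial_z\bigl[\sqrt\pi(-\ai'(x-z)-b\,\ai(x-z))\bigr]\big|_{x=0}=\sqrt\pi\bigl(\ai''(-\lambda)+b\,\ai'(-\lambda)\bigr)=\sqrt\pi\bigl((-\lambda)\ai(-\lambda)+b\,\ai'(-\lambda)\bigr)$, using $\ai''(t)=t\,\ai(t)$. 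Therefore
\begin{equation*}
e^{\kappa_n(b)}=\frac{\ai(-\lambda_n)}{(-\lambda_n)\ai(-\lambda_n)+b\,\ai'(-\lambda_n)}
=\frac{1}{(-\lambda_n)+b\,\ai'(-\lambda_n)/\ai(-\lambda_n)}.
\end{equation*}
Now I substitute $-\lambda_n=a_n'+\epsilon_n$ with $\epsilon_n=-b/a_n'+\bO(n^{-4/3})$, and use the local expansions $\ai'(-\lambda_n)=a_n'\ai(a_n')\epsilon_n+\bO(\epsilon_n^2)$ and $\ai(-\lambda_n)=\ai(a_n')+\bO(\epsilon_n^2)$, which give $\ai'(-\lambda_n)/\ai(-\lambda_n)=a_n'\epsilon_n+\bO(\epsilon_n^2)=-b+\bO(n^{-2/3})$. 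Hence the denominator equals $a_n'+\epsilon_n-b^2+\bO(n^{-2/3}\cdot\epsilon_n)=a_n'-b^2+\bO(n^{-2/3})$, so $e^{\kappa_n(b)}=\bigl(a_n'-b^2+\bO(n^{-2/3})\bigr)^{-1}$. Taking logarithms, $\kappa_n(b)=-\log\bigl(-a_n'+b^2+\bO(n^{-2/3})\bigr)=-\log(-a_n')-\log\bigl(1-b^2/a_n'+\bO(n^{-4/3})\bigr)=-\log(-a_n')+b^2/a_n'+\bO(n^{-4/3})$, where in the last step I expanded the logarithm and used $1/a_n'=\bO(n^{-2/3})$ together with $\log(-a_n')=\bO(\log n)$, so that the product of the $\bO(n^{-2/3})$ error inside the log with $1/a_n'$ is absorbed into $\bO(n^{-4/3})$.

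The main obstacle is bookkeeping rather than conceptual: I must make all the Taylor remainders and the $\ai'/\ai$ ratio estimates \emph{uniform} in $n$ and in $b$ over bounded sets. Concretely this means (i) establishing a priori that $\epsilon_n=\bO(n^{-2/3})$ uniformly — which I would get from a Rouch\'e-type or monotonicity argument on $\ai'(-\lambda)+b\,\ai(-\lambda)$ near $a_n'$, exploiting that $\ai'$ has a simple zero there and that $\ai(a_n')$ is bounded away from $0$ with $|\ai(a_n')|\sim c\,|a_n'|^{-1/4}$; and (ii) controlling $\ai''$ (equivalently $t\,\ai(t)$) on the relevant intervals so the quadratic remainder is genuinely $\bO(\epsilon_n^2|a_n'|)=\bO(n^{-2/3})$ after division. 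All of these are immediate from the uniform asymptotic expansions of $\ai,\ai',\ai''$ for large negative argument in \cite[\S\S 9.7, 9.9]{nist}; once they are in place the displayed manipulations above go through, and combining them with \eqref{eq:zeros-of-ai-prime} completes the proof.
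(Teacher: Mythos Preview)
Your approach is correct and genuinely different from the paper's. The paper proceeds by differentiating in $b$: it shows via the Implicit Function Theorem that $\lambda_n'(b)=1/(\lambda_n(b)+b^2)$, integrates this from $0$ to $b$ to get a crude $\bO(n^{-2/3})$ estimate, then bootstraps once more through the same integral to reach $\bO(n^{-4/3})$. For $\kappa_n$ the paper observes the \emph{exact} identity $e^{\kappa_n(b)}=1/(\lambda_n(b)+b^2)$ (obtained by inserting the eigenvalue equation $\ai'(-\lambda_n)=b\,\ai(-\lambda_n)$ into the ratio $\psi_0/\dot w$) and simply substitutes the eigenvalue asymptotic. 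Your direct Taylor expansion at $a_n'$ is arguably more elementary for $\lambda_n$ and avoids the IFT machinery, but for $\kappa_n$ it is more work than necessary: you re-estimate $\ai'(-\lambda_n)/\ai(-\lambda_n)$ via Taylor expansion, whereas the eigenvalue equation gives that ratio \emph{equal to} $b$, and the clean formula $e^{\kappa_n}=1/(\lambda_n+b^2)$ follows with no further asymptotics.

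Two cautions. First, there are sign slips: since $\psi_0(z,x)=\sqrt\pi\,\ai(x-z)$ one has $\psi_0'(\lambda,0)=\sqrt\pi\,\ai'(-\lambda)$ (not $-\sqrt\pi\,\ai'(-\lambda)$), so the eigenvalue equation is $\ai'(-\lambda)-b\,\ai(-\lambda)=0$ as in \eqref{eq:root-of-things}, and likewise $\dot w(0,b,\lambda)=\sqrt\pi\bigl[\lambda\,\ai(-\lambda)+b\,\ai'(-\lambda)\bigr]$. Your errors happen to cancel, but intermediate expressions (e.g.\ $e^{\kappa_n}=(a_n'-b^2+\cdots)^{-1}$, which is negative) are incorrect as written. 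Second, in your Taylor remainder ``$\bO(\epsilon_n^2)$'' for $\ai'(a_n'+\epsilon)$, the implicit constant is $\sup|\ai'''|$ on the relevant interval; the global bound $|\ai'''(t)|\lesssim|t|^{5/4}$ would only yield $\epsilon_n=b/a_n'+\bO(n^{-1})$. To reach $\bO(n^{-4/3})$ you must exploit that on an interval of length $\bO(n^{-2/3})$ around $a_n'$ one has $|\ai'(t)|=\bO(n^{-1/6})$ (since $\ai'(a_n')=0$), whence $|\ai'''(t)|=|\ai(t)+t\,\ai'(t)|=\bO(n^{1/2})$ there rather than $\bO(n^{5/6})$; this sharper bound is what makes the remainder $\bO(n^{-4/3})$ after division by $a_n'\ai(a_n')\sim n^{1/2}$.
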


Next, we summarize the main results of this paper (Thm.~\ref{thm:eigenvalues} and
Thm.~\ref{thm:norming-constants}). They involve the auxiliary function
\begin{equation}
\label{eq:omega_r}
\omega_r(n) \defeq	\begin{cases}
					n^{-1/3}\log^{1/2}n & \text{if } r\in(1,2),
					\\
					n^{-1/3}			& \text{if } r\in[2,\infty).
					\end{cases}
\end{equation}
\begin{thm}
Assume that $q\in\bsA_r$ and $b\in\R$. Then,
\begin{equation*}
\lambda_n(q,b)
	= - a_n' + \pi \frac{\int_0^\infty \ai^2(x+a_n')q(x)dx}{(-a_n')^{1/2}}
		- \frac{b}{a_n'}
		+ \bO(n^{-1/3}\omega_r^2(n)),
\end{equation*}
where the error term is uniform for $(q,b)$ on bounded subsets of $\bsA_r\times\R$.
Also,
\begin{equation*}
\kappa_n(q,b)
	= - \log(-a_n')
		- 2\pi \frac{\int_0^\infty \ai(x+a_n')\ai'(x+a_n')q(x)dx}{(-a_n')^{1/2}}
		+ \frac{q(0) + b^2}{a_n'}
		+ \bO(n^{-1/6}\omega_r^2(n)),
\end{equation*}
where the error term is uniform for $(q,q(0),b)$ on bounded subsets of $\bsA_r\times\R\times\R$.
\end{thm}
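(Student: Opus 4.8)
The plan is to build the asymptotics of $\lambda_n(q,b)$ and $\kappa_n(q,b)$ on top of the unperturbed case (Prop.~\ref{prop:spectral-data-unperturbed-mixed-bc}) by controlling the perturbation $q$ through the Volterra integral equation satisfied by $\psi(q,z,\cdot)$. First I would set up the variation-of-parameters representation for the square-integrable solution $\psi(q,z,x)$ in terms of the Airy functions $\ai$ and $\bi$ (the ``unperturbed'' basis at energy $z$), namely writing $\psi(q,z,x)=\psi_0(z,x)+\int_x^\infty G(z,x,t)q(t)\psi(q,z,t)\,dt$ with the Green kernel $G$ built from $\ai(x-z)$ and $\bi(x-z)$. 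The relevant quantitative input is uniform control, for $z$ in a neighborhood of the real axis near $-a_n'$, of the Airy asymptotics: the bounds $\abs{\ai(x-z)}$, $\abs{\ai'(x-z)}$ and the turning-point behavior. I would then estimate $\psi(q,z,x)$, $\psi'(q,z,x)$ and their $z$-derivatives $\dot\psi$, $\dot\psi'$ by iterating the Volterra equation and using that $q\in\bsA_r\subset L^1(\R_+)$, with the weight $(1+x)^r$ providing the decay that converts various integrals into the factors $\omega_r(n)$ appearing in \eqref{eq:omega_r}. This is essentially the same machinery used in \cite{toluri-2022} for the Dirichlet case, so I expect to be able to quote or lightly adapt those solution estimates; the new ingredient is only the mixed boundary functional $w(q,b,z)=\psi'(q,z,0)-b\psi(q,z,0)$.

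Next I would derive the eigenvalue asymptotics. Plugging $x=0$ into the Volterra representation and its $x$-derivative gives expansions
\begin{equation*}
\psi(q,z,0)=\psi_0(z,0)+\int_0^\infty G(z,0,t)q(t)\psi_0(z,t)\,dt+(\text{quadratic in }q),
\end{equation*}
and similarly for $\psi'(q,z,0)$, hence for $w(q,b,z)$. The leading zero of $z\mapsto w(0,b,z)$ is $\lambda_n(b)=-a_n'-b/a_n'+\bO(n^{-4/3})$ from the Proposition; I would then locate the zero of the perturbed $w$ by a Newton-type (implicit function) argument, dividing by $\dot w(0,b,\lambda_n(b))$ and showing this derivative is bounded below of the right order (it behaves like the unperturbed norming-constant denominator, $\asymp(-a_n')^{-1/2}\cdot\text{const}$ up to the explicit Wronskian normalization). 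The first-order correction term is $w(q,b,\lambda_n(b))/\dot w(0,b,\lambda_n(b))$, and the computation that the linear-in-$q$ part of $w$ evaluated at the unperturbed eigenvalue equals (up to normalization) $\int_0^\infty \ai^2(t+a_n')q(t)\,dt$ is where the Wronskian identity \eqref{eq:the-identity} and the fact that $\psi_0(\lambda_n,\cdot)$ is the eigenfunction come in; the factor $\pi/(-a_n')^{1/2}$ then matches the claimed formula. The remainder is of order (quadratic-in-$q$ terms) $+$ (error from replacing $\psi(q,z,\cdot)$ by $\psi_0$ inside the integral) $+$ (error from the implicit function linearization), each of which I would bound by $\bO(n^{-1/3}\omega_r^2(n))$ using the solution estimates; uniformity over bounded $(q,b)$ is automatic since all bounds are in terms of $\norm{q}_{\bsA_r}$ and $\abs b$.

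For the norming constants I would start from the second expression in \eqref{eq:norming-constants}, $e^{\kappa_n(q,b)}=\psi(q,\lambda_n(q,b),0)/\dot w(q,b,\lambda_n(q,b))$, take logarithms, and expand both factors to first order in $q$ and in the eigenvalue shift $\delta_n\defeq\lambda_n(q,b)-\lambda_n(b)$ (already known from the first part). The $-\log(-a_n')$ and the $b^2/a_n'$ come from the unperturbed Proposition; the linear-in-$q$ contribution splits into a piece from $\log\psi(q,\lambda_n,0)$, a piece from $\log\dot w$, and a piece from the chain-rule term $\dot\kappa_n\cdot\delta_n$ — after simplification (again using \eqref{eq:the-identity} to rewrite $\dot w$ and the derivative of $\psi$ at a zero of $w$) these combine into $-2\pi\int_0^\infty\ai\,\ai'(t+a_n')q(t)\,dt/(-a_n')^{1/2}$, and the $q(0)/a_n'$ term arises from an integration by parts that moves a derivative onto $q$, producing a boundary term at $x=0$ weighted by $\ai^2(a_n')\asymp 1/(-a_n')^{1/2}\cdot(\text{explicit})$; this is the one place where $q'\in\sA_r$ (not just $q\in\sA_r$) is genuinely needed, and it accounts for the slightly worse error exponent $n^{-1/6}$ versus $n^{-1/3}$. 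The main obstacle throughout is bookkeeping the turning-point region: near $z\approx x$ the Airy functions are not oscillatory and the naive $L^1$ bound on $q$ loses a power of $n$, so one must split the integral at a scale like $n^{-1/3}$ (or use the uniform Airy bounds valid across the turning point) and carefully feed in the weight $(1+x)^r$ — this is exactly the mechanism that forces the $\log^{1/2}n$ factor in $\omega_r$ when $r\in(1,2)$, and getting those exponents sharp rather than just $o(1)$ is the delicate part of the argument.
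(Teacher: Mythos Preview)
Your approach is viable but genuinely different from the paper's. The paper does \emph{not} run a Newton/implicit-function iteration on $w(q,b,\cdot)$; instead it establishes Fr\'echet real-analyticity of $\lambda_n(\cdot,b)$ and $\kappa_n(\cdot,b)$ on $\sA_r$, computes their gradients explicitly (for the eigenvalue, $\partial\lambda_n/\partial q(x)=\eta_n^2(q,b,x)(1+x)^{-r}$ with $\eta_n$ the $L^2$-normalized eigenfunction), and then writes the difference as a path integral $\lambda_n(q,b)-\lambda_n(0,b)=\int_0^1\inner{\eta_n^2(tq,b,\cdot)}{q}_2\,dt$. The integrand is expanded via $\psi=\psi_0+\varXi$ together with a separate lemma giving $\norm{\psi(q,\lambda_n,\cdot)}_2^2=(\tfrac32\pi n)^{1/3}[1+\bO(\omega_r(n))]$, and a bootstrap (first crude $\bO(n^{-1/3}\omega_r)$, then refined) replaces $\lambda_n(tq,b)$ by $-a_n'$ inside $\psi_0^2$. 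The norming constant is handled the same way, with the gradient split as $A_n+B_n\,\partial\lambda_n/\partial q$ and five technical lemmas tracking the many cross terms; the $q(0)$ boundary contribution indeed appears through an integration by parts, as you anticipate.

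What each route buys: the paper's gradient formula makes the leading term $\int\psi_0^2 q$ emerge structurally (no need to verify by hand that $\phi_0(b,\lambda_n(b),\cdot)=\psi_0/\psi_0(\cdot,0)$ at the unperturbed eigenvalue), and the path integral over $t\in[0,1]$ handles the nonlinear-in-$q$ remainder uniformly without a separate Newton-remainder estimate. Your approach is more elementary---no Fr\'echet calculus---but requires two ingredients you gloss over: an a~priori localization $\lambda_n(q,b)=-a_n'+o(1)$ before Newton can start (the paper supplies this via a Rouch\'e argument on explicit contours, which also rules out extra zeros), and, for $\kappa_n$, substantially more bookkeeping than your sketch suggests: the paper needs second-order expansions of $\varXi$, $\varXi'$, $\dot\varXi$, $\dot\varXi'$ and their second $z$-derivatives (via Cauchy estimates on shrinking circles), and several delicate cancellations among $\sim 15$ terms, to reach the sharp $\bO(n^{-1/6}\omega_r^2(n))$ error. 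Your outline is correct in spirit, but the norming-constant half is considerably heavier than you indicate.
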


The methods used in this work are based on those introduced
by Pöschel and Trubowitz in \cite{poeschel} in their treatment of the inverse Dirichlet problem
in a finite interval (see also \cite{dahlber,isaacson-2,isaacson-1}). Closely related are the results of
\cite{chelkak1,chelkak2,chelkak2.5,chelkak3}, where the inverse problem for the perturbed
harmonic oscillator is investigated (in the real line as well as in the half-line).
We refer to \cite{toluri-2022} for a detailed recount of
results concerning the spectral theory of one-dimensional Stark operators.

As to the organization of this paper: Section~\ref{sec:prelim} discusses the spectral data of the
unperturbed operator $H_{0,b}$, the behavior of certain sets of fundamental solutions to the
eigenvalue problem $(\tau_q-z)\varphi=0$, and their analytic properties in the sense of Fréchet.
The main statements about the eigenvalues are worked out in Section~\ref{sec:eigenvalues}. Finally,
the norming constants are treated in
Section~\ref{sec:norming-constants}. The Appendix contains some auxiliary
results mostly used in Section~\ref{sec:norming-constants}.


\section{Preliminaries}
\label{sec:prelim}

\subsection[The unperturbed problem]{The unperturbed problem}

The eigenvalue equation
\begin{equation*}
\tau_0\varphi = z\varphi \quad (z\in\C)
\end{equation*}
has two sets of linearly independent solutions that are relevant to this work.

One pair of solutions is
\begin{equation*}
\psi_0(z,x) \defeq \sqrt{\pi}\ai(x-z)
\quad\text{and}\quad
\theta_0(z,x) \defeq \sqrt{\pi}\bi(x-z),
\end{equation*}
where $\ai$ and $\bi$ denote the Airy functions of the first and of second kind, respectively;
we refer to \cite[Sec. 9]{nist} for a summary of their properties.
We note that $\psi_0(z,\cdot)\in L^2(\R_+)$ for every $z\in\C$. Moreover,
\[
W(\psi_0(z),\theta_0(z)) \defeq \psi_0(z,x)\theta_0'(z,x)-\psi_0'(z,x)\theta_0(z,x) \equiv 1.
\]
Also, we have the identities
\begin{equation}
\label{eq:more-bs}
\dot{\psi_0}(z,x) = - \psi_0'(z,x)
\quad\text{and}\quad
\dot{\theta_0}(z,x) = - \theta_0'(z,x).
\end{equation}
Furthermore, in terms of the auxiliary functions
\begin{equation*}
\sigma(w)\defeq 1 + \abs{w}^{1/4},
\quad
g_A(w) \defeq \exp(-\tfrac23\re w^{3/2})
\quad\text{and}\quad
g_B(w) \defeq 1/g_A(w),
\end{equation*}
one has the inequalities
\begin{gather}
\abs{\psi_0(z,x)}
	\le C_0\frac{g_A(x-z)}{\sigma{(x-z)}},
\quad
\abs{\psi_0'(z,x)}
	\le C_0 \sigma{(x-z)}g_A(x-z),\label{eq:vbe-psi0-prime}
\\[1mm]
\abs{\theta_0(z,x)}
	\le 2C_0\frac{g_B(x-z)}{\sigma{(x-z)}}\quad\text{and}\quad
\abs{\theta_0'(z,x)}
	\le 2C_0 \sigma{(x-z)}g_B(x-z),\nonumber
\end{gather}
where $C_0$ is a positive constant \cite[Lemma A.1]{toluri-2022}. Concerning the function $g_A$,
the following assertion holds true; see \cite[Lemma~2.2]{toluri-2022} for a proof.

\begin{lemma}
If $z\in\C\setminus\R$, then $g_A(x-z)$ is a decreasing function of $x\in\R_+$ and $g_A(x-z)\to 0$ as
$x\to\infty$. If $\lambda\in\R$, then $g_A(x-\lambda)=1$ if $x\in[0,\lambda]$ and monotonically
decreases to 0 if $x\in(\lambda,\infty)$.
\end{lemma}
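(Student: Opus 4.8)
The plan is to reduce both assertions to the elementary identity $\partial_x(x-z)^{3/2}=\tfrac32(x-z)^{1/2}$, valid whenever $x-z$ avoids the branch cut $(-\infty,0]$, combined with the fact that the principal square root always has strictly positive real part off that cut.

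First I would differentiate $g_A$. Since $\re$ commutes with $\partial_x$ and $g_A(w)=\exp(-\tfrac23\re w^{3/2})$, at every $x$ with $x-z\notin(-\infty,0]$ (so that $t\mapsto(t-z)^{3/2}$ is holomorphic near $x$) one gets
\[
\partial_x\, g_A(x-z) = -\tfrac23\,g_A(x-z)\,\partial_x\,\re\big[(x-z)^{3/2}\big] = -\,g_A(x-z)\,\re\big[(x-z)^{1/2}\big].
\]
Writing $x-z=\rho e^{i\vartheta}$ with $\rho>0$ and $\vartheta\in(-\pi,\pi)$ the principal argument, one has $(x-z)^{1/2}=\sqrt{\rho}\,e^{i\vartheta/2}$ with $\vartheta/2\in(-\pi/2,\pi/2)$, hence $\re[(x-z)^{1/2}]=\sqrt{\rho}\cos(\vartheta/2)>0$. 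Therefore $\partial_x g_A(x-z)<0$ at every such $x$.

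For $z\in\C\setminus\R$ this condition holds for all $x\in\R$, so $g_A(x-z)$ is strictly decreasing on $\R_+$. For the limit, expanding $(x-z)^{3/2}=x^{3/2}(1-z/x)^{3/2}=x^{3/2}-\tfrac32 z\,x^{1/2}+\bO(x^{-1/2})$ as $x\to\infty$ yields $\re[(x-z)^{3/2}]=x^{3/2}-\tfrac32(\re z)x^{1/2}+\bO(x^{-1/2})\to\infty$, whence $g_A(x-z)\to 0$. For $z=\lambda\in\R$ I would split at $x=\lambda$. If $\lambda>0$ and $x\in[0,\lambda]$ then $x-\lambda\le 0$; for a nonpositive real $w$ the principal branch gives $w^{3/2}=|w|^{3/2}e^{3i\pi/2}=-i|w|^{3/2}$ (and $0$ at $w=0$), so $\re[(x-\lambda)^{3/2}]=0$ and $g_A(x-\lambda)=1$ throughout $[0,\lambda]$. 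For $x>\lambda$ one has $x-\lambda>0$ and thus $g_A(x-\lambda)=\exp(-\tfrac23(x-\lambda)^{3/2})$, which decreases strictly from $1$ to $0$; this also disposes of the case $\lambda\le 0$, where $[0,\lambda]$ degenerates.

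The computation is short; the only point demanding care is the legitimacy of the derivative formula near the branch cut, which is exactly why the nonreal and real cases must be handled separately, together with the bookkeeping of the principal argument of $x-z$ needed to conclude $\re(x-z)^{1/2}>0$. I do not expect any substantive obstacle beyond this.
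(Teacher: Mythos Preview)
Your proof is correct. The paper does not supply its own argument here; it simply cites \cite[Lemma~2.2]{toluri-2022} for a proof. Your approach---differentiating $g_A(x-z)$ to obtain $\partial_x g_A(x-z)=-g_A(x-z)\,\re[(x-z)^{1/2}]$ off the branch cut and then reading off the sign from $\vartheta/2\in(-\pi/2,\pi/2)$---is the natural direct computation and is consistent with the paper's convention $\arg w\in(-\pi,\pi]$ (made explicit in the Appendix), which is what legitimizes your evaluation $(x-\lambda)^{3/2}=-i|x-\lambda|^{3/2}$ on the cut. There is nothing further to compare.
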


Another pair of solutions is
\begin{equation*}
s_0(z,x) \defeq -\theta_0(z,0)\psi_0(z,x) + \psi_0(z,0)\theta_0(z,x),
\quad
c_0(z,x) \defeq \theta_0'(z,0)\psi_0(z,x) - \psi_0'(z,0)\theta_0(z,x).
\end{equation*}
Clearly,
\begin{equation*}
s_0(z,0) = c_0'(z,0) = 0,\quad s_0'(z,0) = c_0(z,0) = 1
\end{equation*}
so again $W(c_0(z),s_0(z))\equiv 1$. They obey the identities
\begin{gather}
\dot{s}_0(z,x) = c_0(z,x) - s_0'(z,x),
\quad
\dot{c}_0(z,x) = -z s_0(z,x) - c_0'(z,x),\label{eq:s-dot-c-dot}
\intertext{from which it follows that}
\dot{s}_0'(z,x) = c_0'(z,x) - (x-z)s_0(z,x),
\quad
\dot{c}_0'(z,x) = -z s_0'(z,x) - (x-z)c_0(z,x).\nonumber
\end{gather}
Also,
\begin{gather}
\label{eq:very-basic-estimates-2}
\abs{s_0(z,x)}
	\le 2C_0^2\frac{\ch(z,x)}{\sigma(z)\sigma(x-z)},
\quad
\abs{s_0'(z,x)}
	\le 2C_0^2\frac{\sigma(x-z)}{\sigma(z)}\ch(z,x),
\\
\abs{c_0(z,x)}
	\le 2C_0^2 \frac{\sigma(z)}{\sigma(x-z)}\ch(z,x),
\quad
\abs{c_0'(z,x)}
	\le 2C_0^2 \sigma(z)\sigma(x-z)\ch(z,x),\nonumber
\end{gather}
where
\begin{equation}
\label{eq:ch}
\ch(z,x)\defeq g_B(-z)g_A(x-z) + g_A(-z)g_B(x-z).
\end{equation}

The spectrum of $H_b\defeq H_{0,b}$ is the sequence of real numbers
$\{\lambda_n(b)\}_{n=1}^\infty$, arranged according to increasing values, whose elements solve
\begin{equation}
\label{eq:root-of-things}
\ai'(-\lambda) - b\ai(-\lambda) = 0,
\end{equation}
while their associated norming constants $\{\kappa_n(b)\}_{n=1}^\infty$, in view of
\eqref{eq:norming-constants}, are given by
\begin{equation*}
e^{\kappa_n(b)}
	= \frac{\ai(-\lambda_n(b))}
		{\lambda_n(b)\ai(-\lambda_n(b)) + b \ai'(-\lambda_n(b))}
	= \frac{1}{\lambda_n(b) + b^2}.
\end{equation*}
Notice that the last expression above is always strictly positive.

\begin{proposition}
\label{prop:spectral-data-unperturbed-mixed-bc}
Let $a_n'$ be the $n$-th zero of the function $\ai'$. Assume $b\in\R$. Then,
\begin{equation*}
\lambda_n(b)
		= - a_n' - \frac{b}{a_n'} + \bO(n^{-4/3})
\quad\text{and}\quad
\kappa_n(b)
	= - \log(-a_n') + \frac{b^2}{a_n'} + \bO(n^{-4/3}),
\end{equation*}
where the error terms are uniform on bounded subsets of $\R$.
\end{proposition}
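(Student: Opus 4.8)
The plan is to derive the eigenvalue asymptotics directly from the transcendental equation \eqref{eq:root-of-things} by a Newton-type argument, and then to obtain the norming constants from the closed formula $e^{\kappa_n(b)}=(\lambda_n(b)+b^2)^{-1}$ recorded above. Write $G_b(t)\defeq\ai'(t)-b\ai(t)$, so that $-\lambda_n(b)$ is a zero of $G_b$; recall $a_n'<0$ and $\ai'(a_n')=0$, and note that all zeros of $\ai'$ are simple and $\ai(a_n')\neq 0$ (both follow at once from $\ai''(t)=t\ai(t)$ together with uniqueness for the Airy equation). Taylor-expanding $G_b$ about $t=a_n'$ and using $\ai''=t\ai$, $\ai'''=\ai+t\ai'$ gives $G_b(a_n')=-b\ai(a_n')$, $G_b'(a_n')=\ai''(a_n')-b\ai'(a_n')=a_n'\ai(a_n')$ (independent of $b$), and $G_b''(t)=(1-bt)\ai(t)+t\ai'(t)$. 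Combining this with \eqref{eq:zeros-of-ai-prime}, the value $\ai(a_n')=(-1)^{n-1}\pi^{-1/2}|a_n'|^{-1/4}(1+\bO(n^{-2}))$ from \cite[\S 9.9(iv)]{nist}, and the elementary bound $|\ai'(t)|\lesssim n^{1/2}|t-a_n'|$ on a shrinking neighborhood of $a_n'$ (integrate $\ai''=t\ai$), one obtains on $I_n\defeq[a_n'-Cn^{-2/3},a_n'+Cn^{-2/3}]$ the estimates $|G_b(a_n')|\asymp|b|\,n^{-1/6}$, $|G_b'(a_n')|\asymp n^{1/2}$ and $\sup_{I_n}|G_b''|\lesssim n^{1/2}$, with all implicit constants depending only on a bound $b_0\geq|b|$ and with $C=C(b_0)$.

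With $C=C(b_0)$ taken large enough, the linear term $|G_b'(a_n')|\,Cn^{-2/3}\asymp Cn^{-1/6}$ dominates both $|G_b(a_n')|\asymp|b|\,n^{-1/6}$ and the quadratic remainder $\tfrac12(\sup_{I_n}|G_b''|)(Cn^{-2/3})^2=\bO(n^{-5/6})$, so $G_b$ changes sign across $I_n$; since moreover $G_b'$ does not vanish on $I_n$ for $n$ large, $G_b$ has there a unique zero $t_n$, with $|t_n-a_n'|\leq Cn^{-2/3}$. Writing $0=G_b(t_n)=G_b(a_n')+G_b'(a_n')(t_n-a_n')+\tfrac12 G_b''(\xi)(t_n-a_n')^2$ with $\xi\in I_n$ and solving for $t_n-a_n'$ yields $t_n-a_n'=-G_b(a_n')/G_b'(a_n')+\bO(n^{-4/3})=b/a_n'+\bO(n^{-4/3})$, uniformly for $|b|\leq b_0$, since $|G_b''(\xi)/G_b'(a_n')|\lesssim 1$ and $(t_n-a_n')^2=\bO(n^{-4/3})$. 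Finally one must check that $t_n$ is precisely $-\lambda_n(b)$: the intervals $I_n$ are pairwise disjoint, and on the complement of $\bigcup_n I_n$ the function $G_b$ does not vanish for $n$ large (by the oscillatory behaviour of $\ai'$ between consecutive zeros, where $|\ai'(t)|$ dominates $|b\ai(t)|$; equivalently, invoke the interlacing of the spectra of $H_{0,b}$ and $H_{0,0}$), so a standard counting argument shows that for $n\geq N_0(b_0)$ the zeros of $G_b$ on $(-\infty,\sup I_{N_0})$ are exactly the $t_n$, enumerated by $n$. Hence $-\lambda_n(b)=t_n=a_n'+b/a_n'+\bO(n^{-4/3})$, which is the claimed formula.

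For the norming constants, substitute the eigenvalue asymptotics into $\kappa_n(b)=-\log(\lambda_n(b)+b^2)$, which is legitimate because $\lambda_n(b)+b^2>0$. Setting $\mu_n\defeq-a_n'\to\infty$, one has $\lambda_n(b)+b^2=\mu_n\bigl(1+b^2/\mu_n+\bO(n^{-4/3})\bigr)$ (the term $-b/a_n'$ contributes $b/\mu_n^2=\bO(n^{-4/3})$), so expanding the logarithm and using $1/\mu_n\asymp n^{-2/3}$ gives $\log(\lambda_n(b)+b^2)=\log\mu_n+b^2/\mu_n+\bO(n^{-4/3})$; therefore $\kappa_n(b)=-\log(-a_n')+b^2/a_n'+\bO(n^{-4/3})$, uniformly for $b$ in bounded subsets of $\R$.

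I expect the only genuinely delicate point to be the bookkeeping that keeps the Newton step uniform in $b$ on bounded sets — that the constant $C$ and the threshold $N_0$ can be chosen depending only on $b_0$ — together with the counting step that identifies the local root $t_n$ as the $n$-th eigenvalue; the Taylor expansions and the passage to the norming constants are routine once the Airy asymptotics of \cite{nist} are available.
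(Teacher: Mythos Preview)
Your proof is correct and takes a genuinely different route from the paper's. The paper works in the parameter $b$: it uses the interlacing of spectra together with the Implicit Function Theorem to show that $\lambda_n(\cdot)$ is $C^1$ with $\lambda_n'(b)=(\lambda_n(b)+b^2)^{-1}$, then writes $\lambda_n(b)-\lambda_n(0)=b\int_0^1(\lambda_n(tb)+(tb)^2)^{-1}\,dt$ and bootstraps (first $\lambda_n(b)=-a_n'+\bO(n^{-2/3})$, then substitute back to obtain the $\bO(n^{-4/3})$ remainder). You instead Taylor-expand the characteristic function $G_b(t)=\ai'(t)-b\ai(t)$ directly about $t=a_n'$ and run a Newton step. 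Your approach is more hands-on and avoids the Implicit Function Theorem altogether, at the price of needing the explicit Airy asymptotics and a separate counting/identification argument; the paper's approach gets uniformity in $b$ and the labeling of the eigenvalue for free from interlacing, without ever touching the size of $\ai(a_n')$ or $\ai''(a_n')$. Both routes handle the norming constants identically, by substituting into $\kappa_n(b)=-\log(\lambda_n(b)+b^2)$. Your own caveat is apt: the only places that need care are choosing $C=C(b_0)$ and $N_0=N_0(b_0)$ uniformly, and the identification $t_n=-\lambda_n(b)$; invoking interlacing (as you suggest as an alternative) is the cleanest way to settle the latter.
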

\begin{proof}
Let $\{a_n\}_{n=1}^\infty$ be the set of zeros of the function $\ai$ (all of them being negative
as we can see in e.g. \cite[\S 9.9(iv)]{nist})
and define $a_0\defeq\infty$; observe that $\sigma(H_\infty)=\{-a_n\}_{n=1}^\infty$.
Also, let us define
\begin{equation*}
w(b,\lambda)\defeq \ai'(-\lambda) - b\ai(-\lambda).
\end{equation*}
Due to the interlacing property of the spectra of $H_b$ for
different values of $b$, we have
\[
\bigcup_{b\in\R\cup\{\infty\}}\{\lambda_n(b)\}_{n=1}^\infty=\R,
\quad
-a_{n-1} < \lambda_{n}(b) < -a_{n}
\quad\text{and}\quad
b\ne b' \implies \lambda_n(b)\ne \lambda_n(b').
\]
Therefore, for every $n\in\N$, we have a unique bijective function
$\lambda_{n}:\R\to(-a_{n-1},-a_{n})$ that satisfies $w(b,\lambda_{n}(b)) = 0$.
On the other hand,
\begin{equation*}
\left.\frac{\partial w}{\partial\lambda}\right|_{(b,\lambda_n(b))}
	= (\lambda_n(b) + b^2)\ai(-\lambda_n(b))\ne 0
\end{equation*}
so, by the Implicit Function Theorem, for every $b_0\in\R$ there exist a neighborhood $I_{b_0}$ and
a continuously differentiable function $\mu$ such that $\mu(b_0)=\lambda_{n}(b_0)$ and $w(b,\mu(b)) = 0$
for $b\in I_{b_0}$. Due to the uniqueness,
\begin{equation*}
\mu(b) = \lambda_n(b),\quad b\in I_{b_0},
\end{equation*}
which in turn implies that $\lambda_n$ is indeed continuously differentiable across its domain.
Moreover,
\[
\lambda_n'(b) = \frac{1}{\lambda_n(b) + b^2}.
\]
This fact allows us to write
\begin{equation}
\label{eq:merry-go}
\lambda_n(b) - \lambda_n(0)
	= \int_0^1\frac{d}{dt}\lambda_n(tb) dt
	= b\int_0^1 \frac{dt}{\lambda_n(tb) + (tb)^2}.
\end{equation}
Thus, assuming $n\ge 2$,
\begin{equation*}
\abs{\lambda_n(b) - \lambda_n(0)}
	\le \abs{b} \int_0^1 \frac{dt}{-a_{n-1} + (tb)^2}
	\le -\frac{\abs{b}}{a_{n-1}},
\end{equation*}
so far implying that
\[
\lambda_n(b) = -a_n' + \bO(n^{-2/3}),
\]
where the implicit constant in the error term is uniform whenever $b$ lies in bounded subsets of
$\R$. Resorting to \eqref{eq:merry-go} again, we obtain
\begin{equation*}
\lambda_n(b) - \lambda_n(0)
	= b\int_0^1 \frac{dt}{-a_n' + (tb)^2 + \bO(n^{-2/3})}
	= -\frac{b}{-a_n'}\int_0^1 \frac{dt}{1 + \bO(n^{-2/3})},
\end{equation*}
where the implicit constant in the error term is (again) uniform if $tb$ belongs to any bounded
subset of $\R$. The first assertion now follows immediately. Finally, the asymptotic formula for
$\lambda_n(b)$ implies
\begin{equation*}
\kappa_n(b) = - \log(-a_n') - \log\left(1 - \frac{b^2}{a_n'} + \bO(n^{-4/3})\right),
\end{equation*}
from which the second assertion follows.
\end{proof}

\subsection[Introducing a perturbation]{Introducing a perturbation}

From now on,
the complexification of the real Hilbert spaces $\sA_r$ and $\bsA_r$ are denoted
$\sA_r^\C$ and $\bsA_r^\C$, respectively.

\begin{remark}
\label{rem:omegas}
Given $(q,z)\in\sA_r^\C\times\C$, let us define
\begin{equation*}
\om(q,z) \defeq \int_0^\infty\frac{\abs{q(x)}}{\sqrt{1+\abs{x-z}}}\,dx.
\end{equation*}
As shown in \cite[Lemma~2.1]{toluri-2022}, this function obeys the inequality
\begin{equation*}
	\om(q,z) \le C_r\norm{q}_{\sA_r}\Omega_r(z),\quad 
	\text{where}\quad
\Omega_r(z)
	\defeq	\begin{dcases}
			\left(\frac{\log(2+\abs{z})}{2+\abs{z}}\right)^{1/2},& r\in(1,2),
			\\
			\left(2+\abs{z}\right)^{-1/2},& r\in[2,\infty),
			\end{dcases}
\end{equation*}
and $C_r$ is a positive constant.
For $q\in\bsA_r^\C$, we also define $\bom(q,z)\defeq \om(q,z) + \om(q',z)$.
\end{remark}

Provided that $q\in\sA_r^\C$, the eigenvalue equation
\begin{equation}
\label{eq:stark-equation}
\tau_q\varphi = z\varphi\quad (z\in\C)
\end{equation}
has linearly independent solutions $\psi(q,z,x)$ and
$\theta(q,z,x)$ such that $\psi(q,z,\cdot)\in L^2(\R_+)$,
$W(\psi(q,z),\theta(q,z))\equiv 1$,
\begin{equation}
\label{eq:about-Xi}
\psi(q,z,x) = \psi_0(z,x) + \varXi(q,z,x),
\quad
\abs{\varXi(q,z,x)}
	\le C\om(q,z)e^{C\om(q,z)}\frac{g_A(x-z)}{\sigma(x-z)},
\end{equation}
and
\begin{equation*}
\theta(q,z,x) = \theta_0(z,x) + \varGamma(q,z,x),
\quad
\abs{\varGamma(q,z,x)}
	\le C\om(q,z)e^{C\om(q,z)}\frac{g_B(x-z)}{\sigma(x-z)}.
\end{equation*}
Moreover,
\begin{equation}
\label{eq:about-Xi-prime}
\psi'(q,z,x) = \psi_0'(z,x) + \varXi'(q,z,x),
\quad
\abs{\varXi'(q,z,x)}
	\le C\om(q,z)e^{C\om(q,z)}\sigma(x-z) g_A(x-z),
\end{equation}
and
\begin{equation*}
	\theta'(q,z,x) = \theta_0'(z,x) + \varGamma'(q,z,x),
\quad
\abs{\varGamma'(q,z,x)}
	\le C\om(q,z)e^{C\om(q,z)}\sigma(x-z) g_B(x-z).
\end{equation*}
Also, $\psi(q,\cdot,x)$, $\psi'(q,\cdot,x)$, $\theta(q,\cdot,x)$ and $\theta'(q,\cdot,x)$
are entire functions for every $(q,x)\in\sA_r^\C\times\R_+$ (real entire if $q$ is
restricted to $\sA_r$). Furthermore, assuming $q\in\bsA_r^\C$, we have
\begin{equation}
\label{eq:about-Xi-dot}
\dot{\psi}(q,z,x) = - \psi_0'(z,x) + \dot{\varXi}(q,z,x),
\quad
\abs{\dot{\varXi}(q,z,x)}
	\le C\bom(q,z)e^{C\bom(q,z)} \sigma(x-z) g_A(x-z).
\end{equation}
Proofs of these assertions are found in \cite{toluri-2022}.

Another pair of linearly independent solutions is
\begin{align*}
s(q,z,x) &\defeq -\theta(q,z,0)\psi(q,z,x) + \psi(q,z,0)\theta(q,z,x),
	\\
c(q,z,x) &\defeq  \theta'(q,z,0)\psi(q,z,x) - \psi'(q,z,0)\theta(q,z,x).
\end{align*}
They clearly obey the boundary conditions
\begin{equation*}
	s(q,z,0) = c'(q,z,0) = 0,\quad s'(q,z,0) = c(q,z,0) = 1.
\end{equation*}
Characterizations of these solutions and their partial derivatives are provided next.

\begin{lemma}
\label{lem:about-c}
Assume $q\in\sA_r^\C$. Then:
\begin{enumerate}[label={(\roman*)}]
\item\label{it:c}
	The solution $c(q,z,x)$ admits the decomposition
	\begin{equation*}
	c(q,z,x) = c_0(z,x) + \varUpsilon_c(q,z,x),
	\end{equation*}
	where
	\begin{equation*}
	\abs{\varUpsilon_c(q,z,x)}
		\le C\om(q,z)e^{C\om(q,z)}
			\frac{\sigma(z)}{\sigma(x-z)}\ch(z,x).
	\end{equation*}

\item\label{it:c-prime}
	Also,
	\begin{equation*}
		c'(q,z,x) = c_0'(z,x) + \varUpsilon_c'(q,z,x),
	\end{equation*}
	where
	\begin{equation*}
		\abs{\varUpsilon_c'(q,z,x)}
			\le C\om(q,z)e^{C\om(q,z)}
				\sigma(z)\sigma(x-z)\ch(z,x).
	\end{equation*}
\end{enumerate}
Moreover, $c(q,\cdot,x)$ and $c'(q,\cdot,x)$ are real entire functions for every
$(q,x)\in\sA_r\times\R_+$.
\begin{enumerate}[resume*]
\item\label{it:c-dot}
	Furthermore, if $q\in\bsA_r^\C$, we have
	\begin{equation*}
		\dot{c}(q,z,x) = \left(q(0)-z\right) s_0(z,x) - c_0'(z,x) + \dot{\varUpsilon}_c(q,z,x),
	\end{equation*}
	where
\begin{equation*}
\abs{\dot{\varUpsilon}_c(q,z,x)}
	\le C\bom(q,z)e^{C\bom(q,z)}
	\left(\frac{\abs{q(0)}+\abs{z}}{\sigma(z)\sigma(x-z)}
		+ \sigma(z)\sigma(x-z)\right)\ch(z,x).
\end{equation*}
\end{enumerate}
\end{lemma}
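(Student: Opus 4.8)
The plan is to derive the decomposition of $c(q,z,x)$ directly from the definition $c(q,z,x) = \theta'(q,z,0)\psi(q,z,x) - \psi'(q,z,0)\theta(q,z,x)$, substituting the already-established decompositions $\psi = \psi_0 + \varXi$, $\theta = \theta_0 + \varGamma$ and their $x$-derivatives, and similarly at $x=0$. Expanding the product, the main term $\theta_0'(z,0)\psi_0(z,x) - \psi_0'(z,0)\theta_0(z,x)$ is exactly $c_0(z,x)$, and everything else is collected into $\varUpsilon_c$. For part \ref{it:c}, I would bound the remaining terms using the estimates \eqref{eq:vbe-psi0-prime} and its companions for $\psi_0,\theta_0$ at $x=0$ (which produce factors $\sigma(z)$, $\sigma(-z)=\sigma(z)$, $g_A(-z)$, $g_B(-z)$), the perturbation bounds \eqref{eq:about-Xi}--\eqref{eq:about-Xi-prime} for $\varXi,\varGamma$ at both $0$ and $x$, and the product structure $g_A(-z)g_B(x-z) + g_B(-z)g_A(x-z)$, which is exactly $\ch(z,x)$ as defined in \eqref{eq:ch}. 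Each cross term contributes at least one factor of $C\om(q,z)e^{C\om(q,z)}$ (possibly two, but since $\om e^{C\om}$ can be absorbed into $C\om e^{C'\om}$ after adjusting constants, this is harmless), and the $\sigma$-factors combine to give $\sigma(z)/\sigma(x-z)$, matching the claimed bound. Part \ref{it:c-prime} is identical in structure, differentiating in $x$ so that $\psi_0(z,x)\mapsto\psi_0'(z,x)$ etc., which swaps $1/\sigma(x-z)$ for $\sigma(x-z)$ throughout.

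For the analyticity claim, I would note that $\psi(q,\cdot,x)$, $\psi'(q,\cdot,x)$, $\theta(q,\cdot,x)$, $\theta'(q,\cdot,x)$ are real entire (stated in the excerpt), hence so are their evaluations at $x=0$; since $c$ and $c'$ are finite linear combinations of products of these, they are real entire as well — this is immediate and needs only a sentence.

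The substantive part is \ref{it:c-dot}. Here I would differentiate the defining identity in $z$: $\dot c = \dot\theta'(q,z,0)\psi + \theta'(q,z,0)\dot\psi - \dot\psi'(q,z,0)\theta - \psi'(q,z,0)\dot\theta$, all solutions evaluated appropriately. For the $q\equiv 0$ piece I expect to recover $\dot c_0(z,x) = -z s_0(z,x) - c_0'(z,x)$ from \eqref{eq:s-dot-c-dot}; the extra $q(0)s_0(z,x)$ term must come from the fact that $\dot\psi(q,z,0)$ and $\dot\theta(q,z,0)$ do not simply equal $-\psi_0'(z,0)$ and $-\theta_0'(z,0)$ but pick up the correction $\dot\varXi$, and more importantly that $\dot\varXi$, $\dot\varGamma$ (and their $x$-derivatives, and their values at $0$) satisfy the $\bom$-type bounds in \eqref{eq:about-Xi-dot} — note the analogous estimates for $\dot\varXi'$, $\dot\varGamma$, $\dot\varGamma'$ are presumably also available from \cite{toluri-2022} in the same vein. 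The mechanism producing $q(0)$ explicitly: I anticipate that the relevant correction term, when one tracks the leading behaviour of $\dot\psi - (-\psi_0')$ near $x=0$ against $\theta$ (and the symmetric combination), isolates the boundary value $q(0)$ via the structure of the Volterra integral equation defining $\psi$ — essentially the derivative with respect to $z$ of the identity $\partial_x(\psi\dot\psi' - \psi'\dot\psi) = -\psi^2$ type relation, or more directly the known expansion $\dot\psi(q,z,x) = -\psi_0'(z,x) + \dot\varXi(q,z,x)$ combined with a companion expansion for $\dot\psi'$ that carries a $q(0)$-term. I would then assemble $\dot c$ as $(\text{main term involving } s_0, c_0') + (\text{the } q(0)s_0 \text{ term}) + \dot\varUpsilon_c$, and bound $\dot\varUpsilon_c$ by collecting: terms with one factor $\bom(q,z)e^{C\bom(q,z)}$ times $\sigma(z)\sigma(x-z)\ch(z,x)$ (from $\dot\varXi'\cdot\theta_0$ type products, using $\sigma(z)\cdot\sigma(x-z)$ pairing), and terms where the $z$-derivative falls on a factor that is merely $\sigma$-bounded while multiplying $s_0$-type quantities, which carry the weight $(\abs{q(0)}+\abs{z})/(\sigma(z)\sigma(x-z))$ coming from the $-z s_0$ and $q(0)s_0$ pieces being perturbed. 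The two $\sigma$-weight patterns are exactly the two summands inside the parenthesis in the claimed bound.

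The main obstacle will be bookkeeping in \ref{it:c-dot}: one must be careful that the "honest" $z$-derivative of $c_0$ (namely $-zs_0 - c_0'$) is separated cleanly from the genuinely $q$-dependent correction $q(0)s_0$, and that no cross term between $z$-large factors and $\om$-small factors is dropped or double-counted. In particular the term $\dot\psi'(q,z,0)$ must be handled via its own decomposition (with a $q(0)$-contribution), and one must verify that multiplying the $O(\abs z)$ growth of $s_0$-type quantities by the remainder does not exceed $\bom e^{C\bom}(\abs{q(0)}+\abs z)/(\sigma(z)\sigma(x-z))\cdot\ch(z,x)$ — i.e. that the $\abs z$ inside is not squared. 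Keeping the $\sigma(z)$ versus $1/\sigma(z)$ placement straight (boundary evaluations give $\sigma(z)$, and the $\ch(z,x)$ factor already absorbs the exponential growth/decay) is where a sign or exponent slip is most likely, so I would organize the proof by writing $c = \theta'(q,z,0)\psi - \psi'(q,z,0)\theta$, differentiating termwise, and tabulating each of the (at most) eight resulting products against its $\sigma$- and $g$-weight before summing.
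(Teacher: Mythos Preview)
Your approach is genuinely different from the paper's. The paper does \emph{not} expand the algebraic definition $c=\theta'(q,z,0)\psi-\psi'(q,z,0)\theta$; instead it works directly with the Volterra equation
\[
c(z,x)=c_0(z,x)+\int_0^x J_0(z,x,y)c(z,y)q(y)\,dy,
\]
iterates it, and bounds the series term by term. For \ref{it:c} and \ref{it:c-prime} your algebraic expansion is perfectly valid and arguably shorter, since it recycles the already-established bounds on $\varXi,\varGamma,\varXi',\varGamma'$; the paper's iteration is more self-contained (it never touches $\theta$ or $\varGamma$) but re-derives estimates you would simply quote.

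The substantive divergence is in \ref{it:c-dot}. In the paper's approach one differentiates the Volterra equation in $z$, observes that $\dot J_0=-\partial_xJ_0-\partial_yJ_0$, and integrates the $\partial_y$-piece by parts; the boundary term at $y=0$ is exactly $J_0(z,x,0)c(z,0)q(0)=s_0(z,x)q(0)$, and one is left with a Volterra equation for $\dot c+c'$ with inhomogeneity $(q(0)-z)s_0+\int J_0 c\,q'$, which is then iterated. So $q(0)$ appears in one line, for a structural reason. In your route, by contrast, $q(0)$ has to be excavated from $\dot\psi'(q,z,0)$ and $\dot\theta'(q,z,0)$ separately: you need the refined decomposition $\dot\varXi'(q,z,0)=-\psi_0(z,0)q(0)+z\psi_1(q,z,0)+\cdots$ (this is Remark~\ref{rem:refinement} in the Appendix, not available in the preliminaries) \emph{and} its analogue for $\dot\varGamma'$, together with a bound on $\dot\varGamma$ that the paper never states. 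These ingredients exist or can be derived by the same methods as in \cite{toluri-2022}, so your approach can be completed, but your description of ``a companion expansion for $\dot\psi'$ that carries a $q(0)$-term'' is the crux and is left as an anticipation rather than a derivation. The paper's Volterra route sidesteps all of this: no $\theta$-side $z$-derivatives are needed, and the $(\abs{q(0)}+\abs{z})/(\sigma(z)\sigma(x-z))$ weight in the remainder bound falls out directly from the inhomogeneity $(q(0)-z)s_0$ propagating through the iteration.
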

\begin{proof}
\ref{it:c} Let us write $c(z,x)$ to denote $c(q,z,x)$. The starting point is the Volterra integral equation
\begin{equation}
\label{eq:c-int-eqn}
c(z,x) = c_0(z,x) + \int_0^x J_0(z,x,y)c(z,y)q(y) dy\quad (z\in\C),
\end{equation}
where  $J_0(z,x,y)\defeq \theta_0(z,x)\phi_0(z,y) - \phi_0(z,x)\theta_0(z,y)$.
Let $c_n(z,x)$ ($n\in\N$) be defined recursively by means of the equation
\begin{equation*}
	c_n(z,x) \defeq \int_0^x J_0(z,x,y)c_{n-1}(z,y) q(y)dy.
\end{equation*}
Given $x\in\R_+$, $c_0(\cdot,x)$ is an entire function, which in turn implies that
$c_n(\cdot,x)$ is an entire function for every $n\in\N$.
By applying \eqref{eq:vbe-psi0-prime} and \eqref{eq:very-basic-estimates-2} we obtain
\begin{equation*}
\label{eq:bound-c}
\abs{c_n(z,x)}
		\le \frac{3^n2^{n+1}}{n!}C_0^{2(n+1)}\frac{\sigma(z)}{\sigma(x-z)}\ch(z,x)
			\left(\int_0^x\frac{\abs{q(y)}}{\sigma(y-z)^2}dy\right)^n
				\quad (n\in\N\cup\{0\}).
\end{equation*}
However,
\begin{equation*}
\int_0^x\frac{\abs{q(y)}}{\sigma(y-z)^2}dy \le \om(q,z),
\end{equation*}
hence
\begin{equation*}
	c(z,x) = \sum_{n=0}^\infty c_n(z,x)
\end{equation*}
converges uniformly on bounded subsets of $\sA_r^\C\times\C\times\R_+$ and
$c(z,x)$ solves \eqref{eq:c-int-eqn}, therefore it is solution to the equation
$\tau_q\varphi = z\varphi$ with the stated boundary conditions.
Clearly, $c(q,\cdot,x)$ is an entire function that becomes real entire if $q\in\sA_r$.

The proof of \ref{it:c-prime} follows the same scheme. In this case
\begin{equation*}
	c'(z,x) = \sum_{n=0}^\infty c_n'(z,x), \quad
		\text{where} \quad c_n'(z,x) =
			\int_0^x \partial_x J_0(z,x,y)c_{n-1}(z,y) q(y)dy
\end{equation*}
and
\begin{equation*}
	\abs{c_n'(z,x)}
		\le \frac{3^n 2^{n+1}}{n!}C_0^{2(n+1)} \sigma(z) \sigma(x-z) \ch(z,x)
			\left(\int_0^x \frac{\abs{q(y)}}{\sigma(y-z)^2}dy\right)^n
				\quad (n\in\N\cup\{0\}).
\end{equation*}
Again, the convergence is uniform on bounded subsets of
$\sA_r^\C\times\C\times\R_+$.

Regarding \ref{it:c-dot}, note that $\dot{c}(z,x)$ satisfies
\begin{align*}
	\dot{c}(z,x) = \dot{c}_0(z,x)
		& - \int_0^x \partial_x J_0(z,x,y) c(z,y) q(y) dy
		\\
		& - \int_0^x \partial_y J_0(z,x,y) c(z,y) q(y) dy
				+ \int_0^x J_0(z,x,y) \dot{c}(z,y) q(y) dy.
\end{align*}
Integrating by parts, the last equation becomes
\begin{align*}
	\dot{c}(z,x)+c'(z,x) = \left( q(0)-z \right) s_0(z,x)
		& + \int_0^x J_0(z,x,y) c(z,y) q'(y) dy
		\\
		& + \int_0^x J_0(z,x,y) \left( \dot{c}(z,y)+c'(z,y) \right) q(y) dy.
\end{align*}
Define
\begin{equation*}
	\gamma_n(z,x) \defeq \int_0^x J_0(z,x,y) c_{n-1}(z,y) q'(y) dy
			 + \int_0^x J_0(z,x,y) \gamma_{n-1}(z,y) q(y) dy,
\end{equation*}
where
\begin{equation*}
	\gamma_0(z,x) \defeq \left( q(0)-z \right) s_0(z,x).
\end{equation*}
A recursive argument shows that
\begin{equation*}
\abs{\gamma_n(z,x)}
	\le \frac{3^n 2^{n+1}}{n!}C_0^{2(n+1)}
		\left(\frac{\abs{q(0)}+\abs{z}}{\sigma(z)} + n\sigma(z) \right)
		\frac{\ch(z,x)}{\sigma(x-z)}
		\left(\int_0^x \frac{\abs{q(y)}+\abs{q'(y)}}{\sigma(y-z)^2}dy\right)^n,
\end{equation*}
for all $n\in\N\cup\{0\}$. Note that
\begin{equation*}
\int_0^x \frac{\abs{q(y)}+\abs{q'(y)}}{\sigma(y-z)^2}dy
	\le \bom(q,z).
\end{equation*}
In this way,
\begin{equation*}
	\dot{c}(z,x) + c'(z,x) = \sum_{n=0}^\infty \gamma_n(z,x),
\end{equation*}
the convergence being uniform on bounded subsets of $\bsA_r^\C\times\C\times\R_+$.
Now define
\begin{equation*}
\dot{\varUpsilon}_c(q,z,x)
	\defeq \sum_{n=1}^\infty \gamma_n(z,x) - \varUpsilon_c'(q,z,x).
\end{equation*}
Then,
\begin{equation*}
\dot{c}(z,x)
	= \left(q(0) - z\right)s_0(z,x) - c_0'(z,x) + \dot{\varUpsilon}_c(q,z,x)
\end{equation*}
Clearly
\begin{equation*}
\sum_{n=1}^\infty \abs{\gamma_n(z,x)}
	\le C\bom(q,z)e^{C\bom(q,z)}
		\left(\frac{\abs{q(0)}+\abs{z}}{\sigma(z)} + \sigma(z)\right)\frac{\ch(z,x)}{\sigma(x-z)}.
\end{equation*}
The assertion follows after taking into account \ref{it:c-prime} and the fact that $\sigma(w)\ge 1$.
\end{proof}

\begin{lemma}
\label{lem:about-s}
Assume $q\in\sA_r^\C$. Then:
\begin{enumerate}[label={(\roman*)}]
\item\label{it:s}
	The solution $s(q,z,x)$ admits the decomposition
	\begin{equation*}
	s(q,z,x) = s_0(z,x) + \varUpsilon_s(q,z,x),
	\end{equation*}
	where
	\begin{equation*}
	\abs{\varUpsilon_s(q,z,x)}
		\le C\om(q,z)e^{C\om(q,z)}
			\frac{\ch(z,x)}{\sigma(z)\sigma(x-z)}.
	\end{equation*}

\item\label{it:s-prime}
	Also,
	\begin{equation*}
	s'(q,z,x) = s_0'(z,x) + \varUpsilon_s'(q,z,x),
	\end{equation*}
	where
	\begin{equation*}
	\abs{\varUpsilon_s'(q,z,x)}
		\le C\om(q,z)e^{C\om(q,z)}
			\frac{\sigma(x-z)}{\sigma(z)}\ch(z,x).
	\end{equation*}
\end{enumerate}
Moreover, $s(q,\cdot,x)$ and $s'(q,\cdot,x)$ are real entire functions for every
$(q,x)\in\sA_r\times\R_+$.
\begin{enumerate}[resume*]
\item\label{it:s-dot}
	Furthermore, if $q\in\bsA_r^\C$, we have
	\begin{equation*}
	\dot{s}(q,z,x) = c_0(z,x) - s_0'(z,x) + \dot{\varUpsilon}_s(q,z,x),
	\end{equation*}
	where
	\begin{equation*}
	\abs{\dot{\varUpsilon}_s(q,z,x)}
		\le C\bom(q,z)e^{C\bom(q,z)}
			\left(\frac{\sigma(x-z)}{\sigma(z)}+\frac{\sigma(z)}{\sigma(x-z)}\right)\ch(z,x).
	\end{equation*}
\end{enumerate}
\end{lemma}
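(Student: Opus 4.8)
The plan is to follow the proof of Lemma~\ref{lem:about-c} almost verbatim, since $s(q,z,x)$ satisfies a Volterra equation with the very same kernel $J_0(z,x,y)$; the only structural change is that the inhomogeneous term is $s_0$ rather than $c_0$. For \ref{it:s} I would start from
\[
s(z,x) = s_0(z,x) + \int_0^x J_0(z,x,y)\,s(z,y)\,q(y)\,dy,
\]
set $s_n(z,x)\defeq\int_0^x J_0(z,x,y)\,s_{n-1}(z,y)\,q(y)\,dy$ for $n\ge1$, and prove by induction — using \eqref{eq:vbe-psi0-prime}, \eqref{eq:very-basic-estimates-2}, the bound $\abs{J_0(z,x,y)}\le 3C_0^2\,\ch(z,x)/(\sigma(x-z)\sigma(y-z))$ valid for $0\le y\le x$ (the same product estimates underlying \cite[Lemma~A.1]{toluri-2022}), and the elementary inequality $\int_0^x f(y)\bigl(\int_0^y g\bigr)^{n-1}dy\le\tfrac1n\bigl(\int_0^x(f+g)\bigr)^n$ — that
\[
\abs{s_n(z,x)}\le\frac{3^n2^{n+1}}{n!}C_0^{2(n+1)}\,\frac{\ch(z,x)}{\sigma(z)\sigma(x-z)}\left(\int_0^x\frac{\abs{q(y)}}{\sigma(y-z)^2}\,dy\right)^n.
\]
Since $\int_0^x\abs{q}\sigma(y-z)^{-2}dy\le\om(q,z)$ (Remark~\ref{rem:omegas}), the Neumann series $s=\sum_{n\ge0}s_n$ converges uniformly on bounded subsets of $\sA_r^\C\times\C\times\R_+$, so $s$ solves the integral equation (hence $\tau_q s=zs$, $s(q,z,0)=0$, $s'(q,z,0)=1$), is a uniform limit of entire functions of $z$ (real entire for $q\in\sA_r$), and $\varUpsilon_s\defeq\sum_{n\ge1}s_n$ obeys the stated bound after summing an exponential series. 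Part~\ref{it:s-prime} is the same computation after differentiating the integral equation in $x$: the endpoint term at $y=x$ drops since $J_0(z,x,x)=0$, leaving $s'(z,x)=s_0'(z,x)+\int_0^x\partial_x J_0\,s\,q\,dy$, and the recursion now uses $\abs{\partial_x J_0(z,x,y)}\le 3C_0^2\,\sigma(x-z)\,\ch(z,x)/\sigma(y-z)$, which turns the prefactor $1/(\sigma(z)\sigma(x-z))$ into $\sigma(x-z)/\sigma(z)$.

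Part~\ref{it:s-dot} is where the real work lies. I would differentiate the integral equation in $z$, use \eqref{eq:more-bs} to write $\partial_z J_0=-(\partial_x+\partial_y)J_0$, integrate by parts the $\partial_y J_0$-integral (both endpoint terms vanish — at $y=x$ because $J_0(z,x,x)=0$, at $y=0$ because $s(z,0)=0$; this last point is exactly what makes the $s$-case $q(0)$-free, in contrast with $c(z,0)=1$ in Lemma~\ref{lem:about-c}\ref{it:c-dot}), recognize $\int_0^x\partial_x J_0\,s\,q\,dy=s'-s_0'$ through part~\ref{it:s-prime}, and use $\dot s_0=c_0-s_0'$ from \eqref{eq:s-dot-c-dot}. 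This recasts everything as a Volterra equation for $F\defeq\dot s+s'$:
\[
F(z,x)=c_0(z,x)+\int_0^x J_0(z,x,y)\,s(z,y)\,q'(y)\,dy+\int_0^x J_0(z,x,y)\,F(z,y)\,q(y)\,dy,
\]
which I solve by $F=\sum_{n\ge0}\gamma_n$ with $\gamma_0=c_0$ and $\gamma_n\defeq\int_0^x J_0\,s_{n-1}\,q'\,dy+\int_0^x J_0\,\gamma_{n-1}\,q\,dy$. A recursion parallel to the one in Lemma~\ref{lem:about-c}\ref{it:c-dot} gives
\[
\abs{\gamma_n(z,x)}\le\frac{3^n2^{n+1}}{n!}C_0^{2(n+1)}\left(\sigma(z)+\frac{n}{\sigma(z)}\right)\frac{\ch(z,x)}{\sigma(x-z)}\left(\int_0^x\frac{\abs{q(y)}+\abs{q'(y)}}{\sigma(y-z)^2}\,dy\right)^n,
\]
the $\sigma(z)$ coming from $\gamma_0=c_0$ and the $n/\sigma(z)$ accumulating from the $q'$-source, which is built from the $s_{n-1}$'s carrying prefactor $1/\sigma(z)$. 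Since that last integral is $\le\bom(q,z)$, the series converges uniformly on bounded subsets of $\bsA_r^\C\times\C\times\R_+$ and, using $\sum_{n\ge1}nt^n/n!=te^t$, one gets $\sum_{n\ge1}\abs{\gamma_n}\le C\bom(q,z)e^{C\bom(q,z)}\,\sigma(z)\,\ch(z,x)/\sigma(x-z)$. Putting $\dot\varUpsilon_s\defeq\sum_{n\ge1}\gamma_n-\varUpsilon_s'$ yields $\dot s=F-s'=\dot s_0+\dot\varUpsilon_s=c_0-s_0'+\dot\varUpsilon_s$, and adding the two prefactors (with $\om\le\bom$) produces the symmetric bound $\abs{\dot\varUpsilon_s(q,z,x)}\le C\bom(q,z)e^{C\bom(q,z)}\bigl(\sigma(z)/\sigma(x-z)+\sigma(x-z)/\sigma(z)\bigr)\ch(z,x)$, as claimed.

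The hard part will be part~\ref{it:s-dot}: carrying out the integration by parts exactly right (so that the absence of a $q(0)$-term is visible and justified), and tracking the $\sigma(z)$-powers through the two coupled recursions carefully enough that the $n$-dependent prefactor is swallowed by the factorials and then recombines with the $\varUpsilon_s'$ contribution into the stated two-term form. One also needs the routine justification that the Neumann series may be differentiated termwise, which follows from the locally uniform convergence established in \ref{it:s} and \ref{it:s-prime}.
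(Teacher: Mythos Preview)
Your proposal is correct and is exactly the approach the paper takes: the paper's proof of this lemma reads in its entirety ``It is analogous to the proof of the previous result, hence omitted,'' and what you have written is precisely that analogue, including the key observation that the boundary term at $y=0$ in the integration by parts vanishes because $s(q,z,0)=0$ (which is why no $q(0)$-term appears, in contrast to Lemma~\ref{lem:about-c}\ref{it:c-dot}).
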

\begin{proof}
It is analogous to the proof of the previous result, hence omitted.
\end{proof}

There is another solution of interest, namely,
\begin{equation}
\label{eq:the-other-function}
\phi(q,b,z,x) \defeq c(q,z,x) + b s(q,z,x),
\end{equation}
where $b\in\C$. Clearly, $\phi(q,b,z,0) = 1$ and $\phi'(q,b,z,0) = b$. It admits the
decomposition
\begin{equation*}
\phi(q,b,z,x) = \phi_0(b,z,x) + \varPhi(q,b,z,x),\quad
\varPhi(q,b,z,x) \defeq \varUpsilon_c(q,z,x) + b \varUpsilon_s(q,z,x),
\end{equation*}
where $\phi_0(b,z,x)\defeq c_0(z,x) + b s_0(z,x)$. Using to the estimates already discussed,
we have
\begin{equation*}
\abs{\phi_0(b,z,x)}
	\le 2 C_0^2(1+\abs{b})\left(\sigma(z) + \frac{1}{\sigma(z)}\right)\frac{\ch(z,x)}{\sigma(x-z)}
\end{equation*}
and
\begin{equation*}
\abs{\varPhi(q,b,z,x)}
	\le C(1+\abs{b})\om(q,z)e^{C\om(q,z)}
		\left(\sigma(z) + \frac{1}{\sigma(z)}\right)\frac{\ch(z,x)}{\sigma(x-z)}.
\end{equation*}
Also, for $\phi'_0$ and $\dot{\phi}_0 = -z s_0 - c'_0 + b c_0 - b s_0'$, we have
\begin{equation*}
\label{eq:estimate-prime-phi-0}
\abs{\phi'_0(b,z,x)}
	\le 2 C_0^2(1+\abs{b})\left(\sigma(z) + \frac{1}{\sigma(z)}\right)\sigma(x-z)\ch(z,x)
\end{equation*}
and
\begin{equation}
\label{eq:estimate-dot-phi-0}
\abs{\dot{\phi}_0(b,z,x)}
	\le 2 C_0^2 (1+\abs{b})
		 \left(\sigma(z)\sigma(x-z) + \frac{\abs{z}}{\sigma(z)\sigma(x-z)}\right) \ch(\lambda,x).
\end{equation}
Later, in Remark~\ref{rem:the-end}, we derive more refined expressions for $\phi$ and its partial
derivatives under de assumption $q\in\bsA_r^\C$.

\subsection{Analyticity}

Let $\cU$ be an open subset of a Hilbert space $\cB$ over a field $\K$. A map $f:\cU\to\K$
is Fréchet differentiable at $q\in\cU$ if there exists a linear functional $d_qf:\cB\to\K$
such that
\[
\lim_{v\to 0}\frac{\abs{f(q+v) - f(q) - d_qf(v)}}{\norm{v}_\cB} = 0.
\]
The map $f$ is continuously differentiable on $\cU$ if it is differentiable at every point
in $\cU$ and the resulting map $df:\cU\to L(\cB,\K)$ is continuous. If $\cB$ is a
Hilbert space over $\C$, then $f$ is analytic on an open subset $\cU$ of $\cB$ if it is
continuously differentiable there.
Now, let $\cB^\C$ be the complexification of a real Hilbert space $\cB$ and assume
$f:\cV\to\C$ differentiable at $q\in\cV$ (an open subset of $\cB^\C$). Then,
the gradient of $f$ at $q$ is the (unique) element
$\partial f/\partial q\in\cB^\C$ such that
\[
d_qf(v) = \inner{\cc{\frac{\partial f}{\partial q}}}{v}_{\cB}
\]
for all $v\in\cB^\C$; here $\inner{\cdot}{\cdot}_\cB$ denotes the inner product in $\cB^\C$.
Finally, consider a real Hilbert space $\cB$ and let
$\cU\subset\cB$ be open. We say that $f:\cU\to\R$ is real analytic on $\cU$ if for every $q\in\cU$
there exists $\cV_q\subset\cB^\C$ open and an analytic map $h_q:\cV_q\to\C$ such that
$f(v) = h_q(v)$ for all $v\in\cU\cap\cV_q$.

\begin{lemma}
$\psi(\cdot,z,x)$, $\psi'(\cdot,z,x)$, $\dot{\psi}(\cdot,z,x)$ and
$\dot{\psi}'(\cdot,z,x)$ are analytic maps from $\sA_r^\C$ to $\C$. Their
corresponding gradients are given by
\begin{align*}
\frac{\partial\psi}{\partial q(y)}(q,z,x)
	&= - \cc{J(q,z,x,y)\psi(q,z,y)}\chi_{[x,\infty)}(y)(1+y)^{-r},
	\\
\frac{\partial\psi'}{\partial q(y)}(q,z,x)
	&= - \cc{\partial_x J(q,z,x,y)\psi(q,z,y)}\chi_{[x,\infty)}(y)(1+y)^{-r},
	\\
\frac{\partial\dot{\psi}}{\partial q(y)}(q,z,x)
	&= - \cc{\left(\partial_z J(q,z,x,y)\psi(q,z,y)
		+ J(q,z,x,y)\dot{\psi}(q,z,y)\right)}\chi_{[x,\infty)}(y)(1+y)^{-r}
\intertext{and}
\frac{\partial\dot{\psi}'}{\partial q(y)}(q,z,x)
	&= - \cc{\left(\partial_z\partial_x J(q,z,x,y)\psi(q,z,y)
		+ \partial_x J(q,z,x,y)\dot{\psi}(q,z,y)\right)}\chi_{[x,\infty)}(y)(1+y)^{-r},
\end{align*}
where $\chi_\cJ$ stands for the characteristic function associated with a set $\cJ$.
\end{lemma}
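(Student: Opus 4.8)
The plan is to establish the formula for the gradient of $\psi$ first and then obtain the other three by differentiating the Volterra representation with respect to $x$ and $z$. I would begin from the integral equation satisfied by $\psi(q,z,\cdot)$; just as $c(q,z,x)$ solves \eqref{eq:c-int-eqn}, the function $\psi$ is the unique $L^2$ solution of a Volterra equation of the form $\psi(q,z,x) = \psi_0(z,x) + \int_x^\infty J(q,z,x,y)\psi(q,z,y)q(y)\,dy$, where $J(q,z,x,y) = \theta_0(z,x)\psi_0(z,y) - \psi_0(z,x)\theta_0(z,y)$ is the Green kernel built from the unperturbed fundamental system (note the integration runs to $\infty$ because $\psi$ is the recessive solution). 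Replacing $q$ by $q+v$ and subtracting, one gets an identity for $\psi(q+v,z,x) - \psi(q,z,x)$; formally differentiating at $v=0$ yields that the candidate gradient direction $D(y) \defeq d_q\psi(v)$ satisfies the \emph{linearized} Volterra equation $D(x) = \int_x^\infty J(q,z,x,y)\big(D(y)q(y) + \psi(q,z,y)v(y)\big)\,dy$. This is again a Volterra equation with the same kernel, so it has a unique solution, and by linearity in $v$ it is given by an integral operator acting on $v$; unwinding the Neumann series (or, more cleanly, using the resolvent kernel of the Volterra equation, which reproduces $J$ with the perturbed solution $\psi$ in place of $\psi_0$) shows that $D(x) = -\int_x^\infty J(q,z,x,y)\psi(q,z,y)v(y)\,dy$. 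Comparing with the definition $d_q\psi(v) = \inner{\cc{\partial\psi/\partial q}}{v}_{\sA_r} = \int_0^\infty \cc{\cc{\partial\psi/\partial q(y)}}\,v(y)\,(1+y)^r\,dy$ and reading off the coefficient of $v(y)$ gives the stated formula, the factor $(1+y)^{-r}$ and the characteristic function $\chi_{[x,\infty)}(y)$ appearing precisely to account for the weight in $\sA_r$ and the range of integration.

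Next I would make the formal computation rigorous. The two points requiring care are: (i) showing that the difference quotient $[\psi(q+v,z,x) - \psi(q,z,x)]/\norm{v}_{\sA_r}$ converges to the claimed linear functional as $\norm{v}_{\sA_r}\to 0$ — this follows by iterating the Volterra equation for the difference and estimating the remainder via the bounds \eqref{eq:about-Xi}, \eqref{eq:vbe-psi0-prime}, \eqref{eq:very-basic-estimates-2} and the inequality $\int_0^\infty \abs{q(y)}\sigma(y-z)^{-2}\,dy \le \om(q,z) \le C_r\norm{q}_{\sA_r}\Omega_r(z)$ from Remark~\ref{rem:omegas}, which control the Neumann series uniformly and give a remainder of order $\om(v,z)^2 = \bO(\norm{v}_{\sA_r}^2)$; and (ii) verifying continuity of the map $q\mapsto d_q\psi$ into $L(\sA_r^\C,\C)$, which amounts to continuity of $q\mapsto\psi(q,z,\cdot)$ in the relevant topology, again a consequence of the uniform convergence of the Neumann series on bounded subsets of $\sA_r^\C\times\C\times\R_+$ already recorded in the excerpt.

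For $\psi'$, $\dot\psi$ and $\dot\psi'$ I would \emph{not} repeat the argument from scratch but instead justify differentiating the gradient identity for $\psi$ under the integral sign. Since $\psi'(q,z,x) = \psi_0'(z,x) + \int_x^\infty \partial_x J(q,z,x,y)\psi(q,z,y)q(y)\,dy$ (the boundary term from differentiating the limit $x$ cancels because $J(q,z,x,x)=0$), the same linearization gives $\partial\psi'/\partial q(y) = -\cc{\partial_x J(q,z,x,y)\psi(q,z,y)}\chi_{[x,\infty)}(y)(1+y)^{-r}$; the $z$-derivatives are handled identically, except that differentiating $J(q,z,x,y)\psi(q,z,y)$ in $z$ produces the two-term structure $\partial_z J\cdot\psi + J\cdot\dot\psi$ displayed in the statement. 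The interchange of $\partial/\partial q$, $\partial_x$ and $\partial_z$ is legitimate because all the series involved — for $\psi$, $\psi'$, $\dot\psi$, $\dot\psi'$ and their gradients — converge locally uniformly in $(q,z,x)$, with $q$-derivatives and $z$-derivatives dominated using \eqref{eq:about-Xi-prime}, \eqref{eq:about-Xi-dot} and the analyticity of $\psi(q,\cdot,x)$, $\psi'(q,\cdot,x)$ etc.\ already established.

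The main obstacle I anticipate is the bookkeeping in step (i): controlling the difference $\psi(q+v,z,\cdot) - \psi(q,z,\cdot) - D(\cdot)$ uniformly in $x\in\R_+$ requires tracking how the Airy-type weights $g_A$, $\sigma$ and $\ch$ propagate through the iterated Volterra kernel, and in particular confirming that the remainder is genuinely $o(\norm{v}_{\sA_r})$ with a constant locally uniform in $(q,z)$ — but this is the same machinery used to prove \eqref{eq:about-Xi} in \cite{toluri-2022}, so it should go through without surprises. Everything else (uniqueness of the Volterra solution, identification of the resolvent kernel, differentiation under the integral) is routine once the estimates are in place.
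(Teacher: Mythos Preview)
Your approach is correct and is essentially the Pöschel--Trubowitz method that the paper invokes; note that the paper's own proof is simply a citation to Lemmas~4.1 and~4.2 of \cite{toluri-2022}, where exactly this Volterra-equation-plus-linearization argument is carried out. One notational slip to fix: in the Volterra equation for $\psi$ the kernel is the \emph{unperturbed} $J_0(z,x,y)=\theta_0(z,x)\psi_0(z,y)-\psi_0(z,x)\theta_0(z,y)$, whereas the $J(q,z,x,y)$ appearing in the gradient formula is the \emph{perturbed} kernel $\theta(q,z,x)\psi(q,z,y)-\psi(q,z,x)\theta(q,z,y)$ (both $\psi_0\to\psi$ and $\theta_0\to\theta$, not just the former) --- you conflate these at first but correctly identify the perturbed kernel as the resolvent in the end, so the argument goes through.
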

\begin{proof}
A proof concerning $\psi(\cdot,z,x)$ and $\psi'(\cdot,z,x)$ is given in Lemma 4.1 of
\cite{toluri-2022}. The analyticity of $\dot{\psi}(\cdot,z,x)$ is shown Lemma 4.2 of \cite{toluri-2022},
whose proof can easily be modified to accommodate the assertion about $\dot{\psi}'(\cdot,z,x)$.
\end{proof}

\begin{remark}
\label{rem:everything-has-gradient}
Later in the next section we assume $(q,\lambda)\in\sA_r\times\R$, in which case
\begin{align*}
\frac{\partial\psi}{\partial q(y)}(q,\lambda,0)
	&=  s(q,\lambda,y)\psi(q,\lambda,y)(1+y)^{-r},
\\
\frac{\partial\psi'}{\partial q(y)}(q,\lambda,0)
	&= - c(q,\lambda,y)\psi(q,\lambda,y)(1+y)^{-r},
\\
\frac{\partial\dot{\psi}}{\partial q(y)}(q,\lambda,0)
	&= \left[\dot{s}(q,\lambda,y)\psi(q,\lambda,y)
		+ s(q,\lambda,y)\dot{\psi}(q,\lambda,y)\right](1+y)^{-r}
\intertext{and}
\frac{\partial\dot{\psi}'}{\partial q(y)}(q,\lambda,0)
	&= - \left[\dot{c}(q,\lambda,y)\psi(q,\lambda,y)
		+ c(q,\lambda,y)\dot{\psi}(q,\lambda,y)\right](1+y)^{-r}.\qedhere
\end{align*}
\end{remark}

Let us extend our definition of
\begin{equation}
\label{eq:w}
w(q,b,z) = \psi'(q,z,0) - b\psi(q,z,0)
\end{equation}
by allowing $b\in\C$.

\begin{corollary}
$w(\cdot,\cdot,z)$ and $\dot{w}(\cdot,\cdot,z)$ are analytic maps from $\sA_r^\C\times\C$ to $\C$.
Moreover,
\begin{align*}
\frac{\partial w}{\partial q(y)}(q,b,\lambda)
	&= - \phi(q,b,\lambda,y)\psi(q,\lambda,y)(1+y)^{-r}
\intertext{and}
\frac{\partial\dot{w}}{\partial q(y)}(q,b,\lambda)
	&= - \left[\dot{\phi}(q,b,\lambda,y)\psi(q,\lambda,y) +
				\phi(q,b,\lambda,y)\dot{\psi}(q,\lambda,y)\right](1+y)^{-r},
\end{align*}
whenever $(q,b,\lambda)\in\sA_r\times\R\times\R$; here $\phi$ is the function defined in
\eqref{eq:the-other-function}. Also,
\begin{equation*}
\frac{\partial w}{\partial b}(q,b,\lambda)
	= -\psi(q,\lambda,0)
\quad\text{and}\quad
\frac{\partial \dot{w}}{\partial b}(q,b,\lambda)
	= -\dot{\psi}(q,\lambda,0).
\end{equation*}
\end{corollary}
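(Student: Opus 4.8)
The plan is to reduce everything to the previously established analyticity of $\psi(\cdot,z,x)$ and $\psi'(\cdot,z,x)$ (and their $z$-derivatives) as maps from $\sA_r^\C\times\C$ to $\C$, which is the content of the preceding lemma, and then simply propagate the linear operations. Indeed, from \eqref{eq:w} we have $w(q,b,z) = \psi'(q,z,0) - b\psi(q,z,0)$. For fixed $z$, the map $(q,b)\mapsto \psi'(q,z,0)$ is analytic in $q$ and independent of $b$, hence jointly analytic; the map $(q,b)\mapsto b\,\psi(q,z,0)$ is a product of the entire function $b$ and the $q$-analytic function $\psi(q,z,0)$, hence jointly analytic (analyticity in the Fréchet sense is preserved under finite linear combinations and products of scalar-valued analytic maps). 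The same argument applied to $\dot{w}(q,b,z) = \dot{\psi}'(q,z,0) - b\dot{\psi}(q,z,0)$, using the analyticity of $\dot{\psi}(\cdot,z,x)$ and $\dot{\psi}'(\cdot,z,x)$ from the lemma, gives analyticity of $\dot{w}(\cdot,\cdot,z)$.

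Next I would compute the gradients. For the $q$-gradient, specialize to $(q,b,\lambda)\in\sA_r\times\R\times\R$ and use linearity of the Fréchet derivative:
\begin{equation*}
\frac{\partial w}{\partial q(y)}(q,b,\lambda)
 = \frac{\partial\psi'}{\partial q(y)}(q,\lambda,0) - b\,\frac{\partial\psi}{\partial q(y)}(q,\lambda,0).
\end{equation*}
Substituting the expressions from Remark~\ref{rem:everything-has-gradient}, namely $\partial\psi'/\partial q(y)(q,\lambda,0) = -c(q,\lambda,y)\psi(q,\lambda,y)(1+y)^{-r}$ and $\partial\psi/\partial q(y)(q,\lambda,0) = s(q,\lambda,y)\psi(q,\lambda,y)(1+y)^{-r}$, one gets $-\bigl(c(q,\lambda,y)+b\,s(q,\lambda,y)\bigr)\psi(q,\lambda,y)(1+y)^{-r}$, which is exactly $-\phi(q,b,\lambda,y)\psi(q,\lambda,y)(1+y)^{-r}$ by the definition \eqref{eq:the-other-function} of $\phi$. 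The same bookkeeping with the $\dot{\psi}$ and $\dot{\psi}'$ gradients from Remark~\ref{rem:everything-has-gradient}, again combining via $\phi = c + bs$ and hence $\dot{\phi} = \dot{c} + b\dot{s}$, yields the stated formula for $\partial\dot{w}/\partial q(y)$; here one uses the product rule on $\dot\phi\psi$-type terms, which is already baked into the Remark's expressions.

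Finally, for the $b$-derivatives one differentiates $w(q,b,\lambda) = \psi'(q,\lambda,0) - b\psi(q,\lambda,0)$ with respect to the real scalar $b$: the first term is $b$-independent and the second is linear in $b$, so $\partial w/\partial b = -\psi(q,\lambda,0)$, and likewise $\partial\dot{w}/\partial b = -\dot{\psi}(q,\lambda,0)$ from the analogous expression for $\dot w$. I do not anticipate a genuine obstacle here; the only point requiring a word of care is the joint analyticity in the pair $(q,b)$ rather than separately, which follows because the dependence on $b$ enters only through multiplication by the entire scalar function $b$, so continuity of the total differential $d_{(q,b)}w$ on $\sA_r^\C\times\C$ is immediate from continuity of $dw$ in $q$ together with the obvious continuity in $b$. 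One should also note in passing that the gradient identities are asserted only for real $(q,b,\lambda)$, where the conjugations appearing in the preceding lemma drop out and the functions $\psi,\psi',c,s$ are real entire, which is why the clean real formulas in Remark~\ref{rem:everything-has-gradient} apply verbatim.
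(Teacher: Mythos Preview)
Your argument is correct and is precisely the derivation the paper has in mind: the corollary is stated without proof because it follows immediately from the preceding lemma and Remark~\ref{rem:everything-has-gradient} by linearity of $w=\psi'-b\psi$ and the definition $\phi=c+bs$. Your handling of the joint $(q,b)$-analyticity and of the reason the conjugations disappear for real $(q,b,\lambda)$ is also the right point to make.
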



\section{The eigenvalues}
\label{sec:eigenvalues}

For $m,n\in\N$ and $m\ge 2$, let us define the contours
\begin{equation}
\label{eq:loops}
\cF^m = \left\{z\in\C: \abs{\xi} = \bigl(m-\tfrac54\bigr)\pi\right\},
\quad
\cF_n = \left\{z\in\C: \abs{\xi - \bigl(n-\tfrac34\bigr)\pi}=\tfrac{\pi}{2}\right\},
\end{equation}
where $\xi\defeq\frac23 z^{3/2}$.
It is easy to verify that $\cF_n$ encloses exactly one zero of $\ai'(-z)$, namely $-a_n'$,
for sufficiently large values of $n$.

\begin{lemma}
\label{lem:bound-for-gA-Neumann}
There exist $m_0, n_0\in\N$ such that, for every $m\ge m_0$ and $n\ge n_0$, the
following statement holds true:
\begin{equation*}
\sigma(z) g_A(-z) < 16\sqrt{\pi} \abs{\text{\rm Ai}'(-z)},
\end{equation*}
whenever $z\in\mathcal{F}^m$ or $z\in\mathcal{F}_n$.
\end{lemma}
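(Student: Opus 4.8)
The plan is to exploit the known asymptotics of $\ai'(-z)$ on the contours $\cF^m$ and $\cF_n$, which are by construction sitting at ``safe'' distances from the zeros $a_n'$, and to compare this with the growth of $\sigma(z)g_A(-z)$. First I would recall the classical asymptotic expansion of the Airy function and its derivative for $\abs{\arg w}<\pi$: one has $\ai'(w) = -\tfrac{1}{2\sqrt\pi}w^{1/4}e^{-\zeta}(1+\bO(\zeta^{-1}))$ with $\zeta=\tfrac23 w^{3/2}$ (see \cite[\S 9.7]{nist}), so that after the substitution $w = -z$, $\xi = \tfrac23 z^{3/2}$, we get
\begin{equation*}
\ai'(-z) = -\frac{(-z)^{1/4}}{2\sqrt\pi}\,e^{-\frac23(-z)^{3/2}}\bigl(1+\bO(\abs{\xi}^{-1})\bigr).
\end{equation*}
Now $\abs{(-z)^{1/4}}$ is comparable to $\abs{z}^{1/4}\asymp\sigma(z)$ (up to the additive $1$, which is harmless for large $\abs z$), and $\bigl|e^{-\frac23(-z)^{3/2}}\bigr| = \exp(-\tfrac23\re(-z)^{3/2})$. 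Comparing with $g_A(-z)=\exp(-\tfrac23\re(-z)^{3/2})$ — here I must be careful about which branch/sign convention makes $g_A$ and the exponential in the Airy asymptotics agree, but on these contours $z$ lies in a sector where $(-z)^{3/2}$ is unambiguous — we see that $\sigma(z)g_A(-z)$ and $\sqrt\pi\abs{\ai'(-z)}$ differ only by the bounded factor $\bigl(1+\bO(\abs\xi^{-1})\bigr)$ and by the comparison constants between $\sigma(z)$ and $\abs{(-z)^{1/4}}$. Choosing $m_0,n_0$ large enough that $\abs\xi\ge(m_0-\tfrac54)\pi$ resp.\ $\abs{\xi-(n_0-\tfrac34)\pi}=\tfrac\pi2$ forces these error terms below a fixed threshold, and then the numerical constant $16$ is comfortably large enough to absorb everything.

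The key steps, in order, are: (1) parametrize $\cF^m$ and $\cF_n$ in the $\xi$-variable and check that on both contours $\abs{\xi}\to\infty$ and $\arg z$ stays in a compact subinterval of $(-\tfrac{2\pi}{3},\tfrac{2\pi}{3})$ — in fact $z$ ranges over a sector around $\R_+$ — so the Airy asymptotic expansion applies uniformly; (2) write $\sqrt\pi\,\ai'(-z) = -\tfrac12(-z)^{1/4}g_A(-z)e^{i\,\mathrm{Im}(\cdots)}(1+\bO(\abs\xi^{-1}))$, take absolute values, and obtain $\sqrt\pi\abs{\ai'(-z)} = \tfrac12\abs{(-z)^{1/4}}g_A(-z)(1+o(1))$; (3) bound $\sigma(z) = 1+\abs z^{1/4} \le 3\abs{(-z)^{1/4}}$ for $\abs z$ large; (4) combine: $\sigma(z)g_A(-z)\le 3\abs{(-z)^{1/4}}g_A(-z) \le 3\cdot\frac{2\sqrt\pi\abs{\ai'(-z)}}{1-o(1)} < 16\sqrt\pi\abs{\ai'(-z)}$ once $m,n$ are large enough that $\tfrac{6}{1-o(1)}<16$.

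The main obstacle I anticipate is step (1)–(2): making sure the Airy asymptotic expansion is genuinely \emph{uniform} in the relevant sector and that the implied $\bO(\abs\xi^{-1})$ constant is controlled near the edges of the contours, together with pinning down the branch conventions so that the exponential appearing in the standard asymptotics of $\ai'$ is exactly $g_A(-z)$ (no stray phase contributing to the modulus). On $\cF_n$ this is slightly delicate because those small circles of radius $\tfrac\pi2$ in the $\xi$-plane translate into $z$-regions that, while far from the zeros, still sweep a little off the positive real axis; but since the distance from $\cF_n$ to the nearest zero $a_k'$ of $\ai'(-z)$ is bounded below (the zeros correspond in $\xi$ to the points $(k-\tfrac34)\pi+\bO(k^{-1})$, so the circle of radius $\tfrac\pi2$ centred at $(n-\tfrac34)\pi$ stays a fixed distance away for $n$ large), the factor $1+\bO(\abs\xi^{-1})$ never degenerates and the estimate goes through. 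A clean way to package this is to invoke the uniform asymptotics of $\ai'$ away from its zeros directly from \cite[\S 9.7]{nist} and phrase the whole argument as: $\sigma(z)g_A(-z)/\bigl(\sqrt\pi\abs{\ai'(-z)}\bigr)\to 0$ — indeed it tends to a constant strictly below $16$ — uniformly on $\cF^m\cup\cF_n$ as $m,n\to\infty$, which gives the claimed $m_0,n_0$.
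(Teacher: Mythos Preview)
Your step~(2) contains a genuine error. The exponential asymptotic
\[
\ai'(w) = -\tfrac{w^{1/4}}{2\sqrt\pi}\,e^{-\frac23 w^{3/2}}\bigl(1+\bO(\abs{\zeta}^{-1})\bigr)
\]
is valid only for $|\arg w| \le \pi - \delta$. With $w=-z$ this requires $|\arg(-z)|\le\pi-\delta$, but on $\cF_n$ (and on the part of $\cF^m$ where $\xi$ is near $\R_+$) the point $z$ lies close to the positive real axis, so $\arg(-z)$ is close to $\pm\pi$ --- precisely where this expansion breaks down. Indeed $\ai'(-z)$ has zeros there (the very $a_n'$ the contours are designed to avoid), while your right-hand side never vanishes; the discrepancy is \emph{not} an $\bO(\abs\xi^{-1})$ correction, and your identity $\sqrt\pi\,|\ai'(-z)| = \tfrac12|(-z)^{1/4}|\,g_A(-z)(1+o(1))$ is simply false on these contours. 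Your step~(1) correctly observes that $\arg z$ stays in $(-\tfrac{2\pi}{3},\tfrac{2\pi}{3}]$, but that is the hypothesis for the \emph{oscillatory} asymptotic of $\ai'(-z)$, not the exponential one for $\ai'(w)$ with $w=-z$.

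The relevant formula in this sector is \cite[9.7.10]{nist},
\[
\sqrt\pi\,\ai'(-z) = z^{1/4}\Bigl[\cos\bigl(\xi+\tfrac\pi4\bigr)+\bO(\abs{\xi}^{-1})\Bigr],\qquad \xi=\tfrac23 z^{3/2},
\]
and since $g_A(-z)=e^{|\im\xi|}$ in this sector, the lemma reduces to the lower bound $16\,|\cos(\xi+\tfrac\pi4)| > e^{|\im\xi|}$ on the contours (up to the harmless factor $\sigma(z)/|z^{1/4}|\to 1$). On $\cF_n$ one has $\xi+\tfrac\pi4=(n-\tfrac12)\pi+\tfrac\pi2 e^{it}$, so $|\cos(\xi+\tfrac\pi4)|=|\sin(\tfrac\pi2 e^{it})|$ and the task is the elementary inequality $e^{|\im w|}<16\,|\sin w|$ on the circle $|w|=\tfrac\pi2$; on $\cF^m$ a similar computation is needed. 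This lower bound on the oscillatory factor --- which is where the design of the contours, keeping $\xi$ a fixed distance from the zeros of $\cos(\xi+\tfrac\pi4)$, actually enters --- is the real content of the lemma and is entirely absent from your argument. (The paper omits the proof and refers to the analogous Lemma~A.2 of \cite{toluri-2022}, but that is the route.)
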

\begin{proof}
It is analogous to the proof of Lemma~A.2 of \cite{toluri-2022}, hence omitted.
\end{proof}

In what follows, we use the abbreviation
\begin{equation*}
w(z) \defeq w(q,b,z),\quad
\psi_0(z) \defeq \psi_0(z,0),\quad
\psi(z) \defeq \psi(q,z,0),\quad
\om(z) \defeq \om(q,z).
\end{equation*}

\begin{lemma}
\label{lem:crude-asymp-eigenvalues}
Assume $(q,b)\in\sA_r\times\R$. Then, given $\epsilon>0$ arbitrarily small,
the eigenvalues of $H_{q,b}$ satisfy
\begin{equation*}
\lambda_n(q,b) = - a_n' + \bO(n^{-2/3+\epsilon}),
\end{equation*}
uniformly on bounded subsets of $\sA_r\times\R$.
\end{lemma}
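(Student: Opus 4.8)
The plan is to locate the eigenvalues as zeros of $w(q,b,z)$ via Rouché's theorem, comparing $w$ with the unperturbed counterpart $w_0(b,z) = \sqrt{\pi}(\ai'(-z) - b\,\ai(-z))$ on the contours $\cF^m$ and $\cF_n$ from \eqref{eq:loops}. First I would write $w(z) = w_0(b,z) + W(z)$, where $W(z) = \varXi'(q,z,0) - b\,\varXi(q,z,0)$ collects the perturbation; by \eqref{eq:about-Xi} and \eqref{eq:about-Xi-prime} (evaluated at $x=0$) one has $\abs{W(z)} \le C(1+\abs{b})\,\om(q,z)\,e^{C\om(q,z)}\,\sigma(z)\,g_A(-z)$, since $g_A(-z)/\sigma(z) \le \sigma(z) g_A(-z)$. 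On the other hand, on $\cF^m$ and $\cF_n$ Lemma~\ref{lem:bound-for-gA-Neumann} gives $\sigma(z) g_A(-z) < 16\sqrt{\pi}\,\abs{\ai'(-z)}$; I would also need a lower bound on $\abs{\ai'(-z) - b\,\ai(-z)}$ in terms of $\abs{\ai'(-z)}$ on these contours — this should follow because $\ai(-z)$ is comparable to (a constant times) $g_A(-z)/\sigma^2(z)$ there up to the oscillating factor, so $\abs{b\,\ai(-z)}$ is lower order relative to $\sigma(z) g_A(-z)$, exactly as in the Dirichlet analysis of \cite{toluri-2022}.

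Next, the decay of $\om(q,z)$ is the quantitative engine: by Remark~\ref{rem:omegas}, $\om(q,z) \le C_r\norm{q}_{\sA_r}\Omega_r(z)$ with $\Omega_r(z) = \bO(\abs{z}^{-1/2+\epsilon'})$ for any $\epsilon'>0$ (absorbing the logarithm when $r\in(1,2)$). On the contours $\cF^m$, $\cF_n$ one has $\abs{z} \asymp m^{2/3}$, resp.\ $n^{2/3}$, so $\om(q,z) = \bO(n^{-1/3+\epsilon})$, which in particular tends to $0$; hence $e^{C\om(q,z)}$ is bounded and $\abs{W(z)} < \abs{w_0(b,z)}$ on these contours once $m,n$ are large enough, uniformly for $(q,b)$ in a bounded set of $\sA_r\times\R$. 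Rouché then yields that $w$ has exactly one zero inside $\cF_n$ for all large $n$, and exactly $n$ zeros inside $\cF^m$ (matching the count of zeros of $\ai'(-z)$, which are the $-a_n'$), so $\lambda_n(q,b)$ is the unique zero of $w$ in $\cF_n$; by \eqref{eq:zeros-of-ai-prime} the center $(n-3/4)^{2/3}(\tfrac32\pi)^{2/3}$ of $\cF_n$ equals $-a_n' + \bO(n^{-4/3})$ and the radius of $\cF_n$ in the $z$-variable is $\bO(n^{-1/3})$ (since $\abs{d z/d\xi} = \abs{z}^{-1/2} \asymp n^{-1/3}$ and $\cF_n$ has $\xi$-radius $\pi/2$). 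Therefore $\lambda_n(q,b) = -a_n' + \bO(n^{-1/3})$, and a slightly finer estimate — contracting the contour to radius $\bO(n^{-1/3}\om(q,z)) = \bO(n^{-2/3+\epsilon})$ once we know the zero is that close to the true value of $w_0$ — upgrades this to $\bO(n^{-2/3+\epsilon})$.

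To carry out that last contraction I would, for fixed large $n$, note $w_0(b,z)$ has a simple zero at $-\lambda_n(b)$ (recall $\lambda_n(b) = -a_n' + \bO(n^{-2/3})$ from Prop.~\ref{prop:spectral-data-unperturbed-mixed-bc}), so on a circle $C_\rho$ of $z$-radius $\rho$ about $-\lambda_n(b)$ one has $\abs{w_0(b,z)} \ge c\,\rho\,\abs{\dot w_0(b,-\lambda_n(b))}$; comparing with the bound $\abs{W(z)} = \bO(\om(q,z)\,\sigma(z) g_A(-z))$ and dividing through by the common size $\abs{\ai'(-z)} \asymp \abs{\dot w_0}/\abs{z}^{1/2}\cdot(\text{oscillation})$, the Rouché inequality $\abs{W} < \abs{w_0}$ on $C_\rho$ holds as soon as $\rho \gtrsim \om(q,z)\,\abs{z}^{-1/2} = \bO(n^{-2/3+\epsilon})$, placing $\lambda_n(q,b)$ within that radius of $\lambda_n(b)$, hence within $\bO(n^{-2/3+\epsilon})$ of $-a_n'$ — uniformly in bounded $(q,b)$. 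The main obstacle I anticipate is the bookkeeping of the lower bound for $\abs{w_0(b,z)}$: $\ai(-z)$ and $\ai'(-z)$ both vanish at interlaced points, and near a zero of $\ai'$ the term $b\,\ai(-z)$ is of the same order as $\sigma(z)^{-1}g_A(-z)$ while $\ai'(-z)$ dips to zero, so one must argue on $\cF_n$ (away from the zero) that $\abs{\ai'(-z)}$ dominates, using the explicit asymptotics of $\ai,\ai'$ for $-z$ large negative (the $\xi$-parametrization makes $\ai'(-z)$ behave like $\cos(\xi+\pi/4)\cdot\abs{z}^{1/4}/\sqrt\pi$ up to lower order, which is bounded below by a constant times $\abs{z}^{1/4}$ on $\abs{\xi-(n-3/4)\pi}=\pi/2$); this is the same estimate underlying Lemma~\ref{lem:bound-for-gA-Neumann}, so invoking that lemma together with the classical Airy asymptotics should close the gap without new ideas.
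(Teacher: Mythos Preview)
Your proposal is correct and broadly follows the same Rouché-theoretic strategy as the paper, but with two organizational differences worth noting. First, the paper compares $w$ not with $w_0(b,z)$ but directly with $\psi_0'(z,0)=\sqrt{\pi}\,\ai'(-z)$, absorbing the entire term $b\,\psi(q,z,0)$ into the perturbation; since $\abs{b\,\psi_0(\lambda)}\le C\abs{b}/\sigma(\lambda)$, this adds a contribution of order $n^{-1/6}$, harmless alongside $\om(q,\lambda)\sigma(\lambda)$. This sidesteps the need to separately count the zeros of $w_0(b,\cdot)$ inside $\cF_n$ and allows Lemma~\ref{lem:bound-for-gA-Neumann} to be applied without further manipulation. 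Second, for the sharp localization the paper stays on the real line: setting $\delta_n = 8(\tfrac32\pi n)^{-2/3+\epsilon}$, it bounds $\abs{w(-a_n'\pm\delta_n)-\psi_0'(-a_n'\pm\delta_n)}$ by $4(\tfrac32\pi n)^{-1/6+\epsilon}$ and then uses the Mean Value Theorem (via $\dot\psi_0'(\lambda)=\lambda\psi_0(\lambda)$) to show $\abs{\psi_0'(-a_n'\pm\delta_n)}>4(\tfrac32\pi n)^{-1/6+\epsilon}$; this forces $w$ to change sign on $(-a_n'-\delta_n,-a_n'+\delta_n)$, producing a zero there. Rouché on the fixed contours $\cF^N$, $\cF_n$ is then invoked only to rule out extra zeros, not to refine the location. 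Your two-stage Rouché argument (first on $\cF_n$, then on contracted circles about $\lambda_n(b)$) reaches the same conclusion and is sound, but carries a bit more complex-analytic bookkeeping---you must control $g_A(-z)$ off the real axis on the small circles and check that $\dot w_0$ does not vary too much there---whereas the paper's real-variable sign-change argument avoids this entirely.
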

\begin{proof}
Assume $\lambda\in\R$. Recalling \eqref{eq:vbe-psi0-prime} and \eqref{eq:about-Xi},
and taking into account that $\omega(\lambda)\to 0$ as $\lambda\to\infty$, we obtain
\begin{equation*}
\abs{w(\lambda) - \psi_0'(\lambda)}
	\le \abs{\psi'(\lambda) - \psi_0'(\lambda)} + \abs{b}\abs{\psi_0(\lambda)}
	\le C_1 \left(\om(\lambda)\sigma(\lambda) + \frac{\abs{b}}{\sigma(\lambda)}\right).
\end{equation*}
for some constant $C_1>0$. Given $\epsilon\in(0,1/6)$, set $\delta_n = 8(\frac32\pi n)^{-2/3+\epsilon}$.
Let $\cU\times \cJ$ be a bounded subset of $\sA_r\times\R$.
Clearly,
\begin{align*}
\om(\lambda)\sigma(\lambda)
	&\le C\norm{q}_{\sA_r}
		\left(\frac{\log(2+\abs{\lambda})}{(2+\abs{\lambda})^{3\epsilon}}\right)^{1/2}
		\frac{1 + \abs{\lambda}^{1/4}}{(1 + \abs{\lambda})^{1/2 -3\epsilon/2}}.
\end{align*}
Thus, in view of \eqref{eq:zeros-of-ai-prime}, there exists $n_1\in\N$ such that
\begin{equation*}
C_1\om(-a_n' \pm \delta_n)\sigma(-a_n' \pm \delta_n)
	\le \abs{-a_n' \pm \delta_n}^{-1/4+3\epsilon/2}
	\le 2(\tfrac32\pi n)^{-1/6+\epsilon},
\end{equation*}
for all $n\ge n_1$ and $(q,b)\in\cU\times\cJ$. Also, there exists $n_2\ge n_1$ such that
\begin{equation*}
\frac{C_1\abs{b}}{\sigma(-a_n' \pm \delta_n)}
	\le 2(\tfrac32\pi n)^{-1/6+\epsilon},
\end{equation*}
for all $n\ge n_2$ and $b\in\cJ$. Therefore,
\begin{equation*}
\abs{w(-a_n' \pm \delta_n) - \psi_0'(-a_n' \pm \delta_n)}
	\le 4(\tfrac32\pi n)^{-1/6+\epsilon}
\end{equation*}
for $n\ge n_2$ and $(q,b)\in\cU\times \cJ$. The proof will be mostly
complete once we prove that
\begin{equation*}
\abss{\psi_0'(-a_n' \pm \delta_n)} > 4 (\tfrac32\pi n)^{-1/6+\epsilon},
\end{equation*}
since the inequality
\begin{equation}
\label{eq:target}
\abs{w(-a_n' \pm \delta_n) - \psi_0'(-a_n' \pm \delta_n)}
	< \abs{\psi_0'(-a_n' \pm \delta_n)}
\end{equation}
will imply that $w(z)$ has a zero on each interval $(-a_n' - \delta_n,-a_n' + \delta_n)$
for sufficiently large $n$.

Let us note that, due to \eqref{eq:more-bs}, $\dot{\psi}_0'(\lambda) = \lambda \psi_0(\lambda)$.
Also, we recall the well-known formula (see e.g. \cite[\S 9.7]{nist})
\begin{equation*}
\ai(-\lambda)
	= \frac{1}{\sqrt{\pi}\lambda^{1/4}}
		\left[\cos\left(\tfrac23\lambda^{3/2}-\tfrac14\pi\right) + \bO(\lambda^{-3/2})\right],
	\quad \lambda\to\infty.
\end{equation*}
Now, consider a sequence $\{c_n\}_{n=1}^\infty\subset\R$ such that $\abs{c_n}\le\delta_n$. Then,
\begin{align*}
(-a_n' + c_n)\psi_0(-a_n' + c_n)
	&= (-a_n' + c_n)^{3/4}\left[\cos\left(\tfrac23(-a_n' + c_n)^{3/2}-\tfrac14\pi\right)
			+ \bO(n^{-1})\right]
	\\
	&=(-1)^{n+1}(\tfrac32\pi n)^{1/2}
			\left[1 + \bO(n^{-2/3+2\epsilon})\right].
\end{align*}
Therefore, there exists $n_3\ge n_2$ such that
\begin{equation*}
\abss{\dot{\psi}_0'(-a_n' + c_n)} > \tfrac12 (\tfrac32\pi n)^{1/2},
\end{equation*}
as long as $n\ge n_3$. As a consequence, due to the Mean Value Theorem,
\begin{equation*}
\abs{\psi_0'(-a_n' \pm \delta_n)}
	= \abss{\dot{\psi}_0'(-a_n' + c_n^\pm)}\abs{\delta_n}
	> 4 (\tfrac32\pi n)^{-1/6+\epsilon},
\end{equation*}
where $c^{+}_n\in (0,\delta_n)$ and $c^{-}_n\in (-\delta_n,0)$. Thus,
\eqref{eq:target} holds for all  $n\ge n_3$ and $(q,b)\in\cU\times \cJ$. As already mentioned,
this implies $w(z)$ has a zero on each interval $(-a_n' - \delta_n,-a_n' + \delta_n)$.

To eliminate the possibility of having more than one zero near every $-a_n'$, we apply Rouché's Theorem
combined with Lemma~\ref{lem:bound-for-gA-Neumann} on the contours $\mathcal{F}^N$ and $\mathcal{F}_n$
introduced in \eqref{eq:loops}, for $N$ sufficiently large and every $n>N$.
The specifics are rather straightforward, hence omitted (c.f. \cite[Lemma~5.1]{toluri-2022}).
\end{proof}

\begin{remark}
\label{rem:emergence-omega_r}
Lemma~\ref{lem:crude-asymp-eigenvalues} implies
\begin{equation*}
\om(q,\lambda_{n}(q,b)) \le C\omega_r(n)
\quad\text{and}\quad
\bom(q,\lambda_{n}(q,b)) \le C\omega_r(n)
\end{equation*}
uniformly on bounded subsets of $\sA_r\times\R$ and $\bsA_r\times\R$, respectively, where
$\omega_r(n)$ has been defined in \eqref{eq:omega_r}.
\end{remark}

\begin{proposition}
\label{lem:eingenvalue-is-real-analytic}
Given $n\in\N$ and $b\in\R$, $\lambda_n(\cdot,b):\sA_r\to\R$ is a real analytic map. Moreover,
\begin{equation*}
\frac{\partial\lambda_n}{\partial q(x)} = \eta^2_n(q,b,x) (1+x)^{-r},\quad
\text{where}\quad
\eta_n(q,b,x)
	\defeq \frac{\psi(q,\lambda_n(q,b),x)}{\norm{\psi(q,\lambda_n(q,b),\cdot)}_2}.
\end{equation*}
\end{proposition}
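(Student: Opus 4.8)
The plan is to apply the Implicit Function Theorem to the zero set of the analytic map $w(\cdot,\cdot,\cdot):\sA_r^\C\times\C\times\C\to\C$, in the spirit of the argument given for Proposition~\ref{prop:spectral-data-unperturbed-mixed-bc}, but now with $q$ ranging over the infinite-dimensional space $\sA_r$. First I would fix $n$ large enough that Lemma~\ref{lem:crude-asymp-eigenvalues} guarantees $\lambda_n(q,b)$ is a simple, isolated zero of $w(q,b,\cdot)$, and compute the derivative $\dot w$ at that zero. By the identity \eqref{eq:the-identity}, $\dot w(q,b,\lambda_n(q,b)) = -\norm{\psi(q,\lambda_n(q,b),\cdot)}_2^2/\psi(q,\lambda_n(q,b),0)$ up to the relation already recorded in \eqref{eq:norming-constants}; since $\psi(q,\lambda_n,0)\ne 0$ (it is, up to normalization, the value at the endpoint of an $L^2$ eigenfunction, and $w(q,b,\lambda_n)=0$ forces $\psi'(q,\lambda_n,0)=b\psi(q,\lambda_n,0)$, so $\psi(q,\lambda_n,0)=0$ would make $\psi(q,\lambda_n,\cdot)$ vanish identically) this derivative is nonzero. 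Hence the Implicit Function Theorem for analytic maps between complex Banach spaces (applied in a complex neighbourhood of $q\in\sA_r$, using the complexification $\sA_r^\C$) produces an analytic solution $q\mapsto\mu(q)$ of $w(q,b,\mu(q))=0$ near the given $q$; uniqueness of the eigenvalue in a fixed small disc around $-a_n'$ — again from Lemma~\ref{lem:crude-asymp-eigenvalues} together with the Rouché argument already used there — identifies $\mu$ with $\lambda_n(\cdot,b)$ on a real neighbourhood, which is exactly what real analyticity means in the sense defined in the Analyticity subsection.

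Next I would compute the gradient. Differentiating the identity $w(q,b,\lambda_n(q,b))=0$ with respect to $q$ in the Fréchet sense gives
\[
\frac{\partial w}{\partial q(x)}(q,b,\lambda_n(q,b))
	+ \dot w(q,b,\lambda_n(q,b))\,\frac{\partial\lambda_n}{\partial q(x)} = 0,
\]
so that $\partial\lambda_n/\partial q(x) = -\bigl(\partial w/\partial q(x)\bigr)/\dot w$. The Corollary preceding this section gives $\partial w/\partial q(x) = -\phi(q,b,\lambda_n,x)\psi(q,\lambda_n,x)(1+x)^{-r}$, and at $\lambda=\lambda_n(q,b)$ the solution $\phi(q,b,\lambda_n,\cdot)=c(q,\lambda_n,\cdot)+b\,s(q,\lambda_n,\cdot)$ satisfies $\phi(q,b,\lambda_n,0)=1$, $\phi'(q,b,\lambda_n,0)=b$, hence obeys the same boundary condition $\phi'(0)-b\phi(0)=0$ as $\psi(q,\lambda_n,\cdot)$; since that equation has a one-dimensional solution space satisfying this condition at $0$, $\phi(q,b,\lambda_n,\cdot)$ is a scalar multiple of $\psi(q,\lambda_n,\cdot)$, and evaluating at $0$ gives $\phi(q,b,\lambda_n,x) = \psi(q,\lambda_n,x)/\psi(q,\lambda_n,0)$. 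Therefore $\partial w/\partial q(x) = -\psi(q,\lambda_n,x)^2(1+x)^{-r}/\psi(q,\lambda_n,0)$, and dividing by $\dot w = -\norm{\psi(q,\lambda_n,\cdot)}_2^2/\psi(q,\lambda_n,0)$ yields
\[
\frac{\partial\lambda_n}{\partial q(x)}
	= \frac{\psi(q,\lambda_n(q,b),x)^2}{\norm{\psi(q,\lambda_n(q,b),\cdot)}_2^2}\,(1+x)^{-r}
	= \eta_n^2(q,b,x)(1+x)^{-r},
\]
which, noting that $\psi$ is real-valued at real $\lambda$ so the square equals the modulus-square, is the claimed formula. (The factor $(1+x)^{-r}$ is precisely the weight appearing when one converts the Fréchet derivative $d_qw(v)$ into a gradient with respect to the $\sA_r$-inner product, as spelled out in the Analyticity subsection and in Remark~\ref{rem:everything-has-gradient}.)

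The main obstacle is bookkeeping rather than conceptual: one must make sure the Implicit Function Theorem is applied correctly in the complexified setting — i.e. that $w(\cdot,b,\cdot)$, viewed as a map $\sA_r^\C\times\C\to\C$, is analytic (which is the Corollary above), that the partial derivative in the spectral variable is the nonvanishing scalar $\dot w\ne 0$ (which needs the identity \eqref{eq:the-identity} plus the observation that $\psi(q,\lambda_n,\cdot)\not\equiv 0$), and that the locally-defined analytic branch coincides on the reals with $\lambda_n(\cdot,b)$ as originally defined via the ordering of eigenvalues. This last point is where Lemma~\ref{lem:crude-asymp-eigenvalues} does the real work: it confines $\lambda_n(q,b)$ to a shrinking neighbourhood of $-a_n'$ uniformly over bounded sets of $q$, so that for each fixed $q_0$ there is a disc isolating this eigenvalue for all nearby $q$, ensuring the Implicit Function Theorem branch is the only zero there and hence equals $\lambda_n$. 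Everything else — differentiating the defining identity, identifying $\phi$ with a multiple of $\psi$ under the shared boundary condition, and simplifying — is routine.
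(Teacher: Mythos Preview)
Your approach is essentially the paper's: apply the Implicit Function Theorem to the analytic map $w$ on $\sA_r^\C\times\C$, verify $\dot w(q,b,\lambda_n)\ne 0$ via the identity \eqref{eq:the-identity}, and then differentiate $w(q,b,\lambda_n(q,b))=0$ to extract the gradient, using that $\phi(q,b,\lambda_n,\cdot)=\psi(q,\lambda_n,\cdot)/\psi(q,\lambda_n,0)$.

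Two small points. First, the sign you give for $\dot w$ is off: integrating \eqref{eq:the-identity} and using $\psi'(0)=b\,\psi(0)$ yields $\psi(q,\lambda_n,0)\,\dot w(q,b,\lambda_n)=\norm{\psi(q,\lambda_n,\cdot)}_2^2$, with a plus sign (consistent with \eqref{eq:norming-constants}); your final formula is nevertheless correct, so this is a bookkeeping slip rather than a gap. Second, the restriction to ``$n$ large enough'' and the appeal to Lemma~\ref{lem:crude-asymp-eigenvalues} for isolation are unnecessary: every eigenvalue of $H_{q,b}$ is simple and isolated (the spectrum is discrete and simple for every $q\in\sA_r$), so the Implicit Function Theorem applies at each $\lambda_n(q,b)$ directly, and the local uniqueness clause of the theorem already identifies the analytic branch with $\lambda_n(\cdot,b)$ on a real neighbourhood. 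The paper proceeds exactly this way, without invoking the rough asymptotics.
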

\begin{proof}
Given $b\in\R$, define $w:\sA_r^\C\times\C\to\C$ by the rule
\begin{equation*}
w(q,\lambda) \defeq \psi'(q,\lambda,0) - b \psi(q,\lambda,0).
\end{equation*}
Recalling \eqref{eq:the-identity}, we note that
\begin{equation}
\label{eq:this-again}
\norm{\psi(q,\lambda,\cdot)}^2_2
	= \int_0^\infty \psi^2(q,\lambda,x) dx
	= \psi(q,\lambda,0)\dot{\psi}'(q,\lambda,0) - \psi'(q,\lambda,0)\dot{\psi}(q,\lambda,0).
\end{equation}
Consider $q\in\sA_r^\C$ real-valued and $\lambda_q=\lambda_{n}(q,b)$. Then $w(q,\lambda_q) = 0$,
which in turn implies
\begin{equation*}
\psi(q,\lambda_q,0) \partial_\lambda w(q,\lambda_q)
	= \norm{\psi(q,\lambda_n(q,b),\cdot)}^2_2 \ne 0
\end{equation*}
due to \eqref{eq:this-again}.
Since also $\psi(q,\lambda_q,0)\ne 0$, we conclude that $\partial_\lambda w(q,\lambda_q)$ is (rather, can be
identified with) a linear isomorphism from $\C$ to $\C$.
Therefore, by the Implicit Function Theorem (see e.g. \cite[XIV \S2, Thm. 2.1]{lang} or
\cite[Appendix B]{poeschel}), there exists an open neighborhood $\cV\subset\sA_r^\C$ of $q$ and a
unique analytic map $\lambda:\cV\to\C$ such that
\begin{equation*}
w(v,\lambda(v)) = 0 \text{ for all } v\in\cV \text{ and } \lambda(q) = \lambda_{n}(q,b).
\end{equation*}
Consequently, $\lambda_n(\cdot,b):\sA_r\to\R$ is real analytic. Since
\begin{equation*}
\frac{\partial w(q,b,\lambda)}{\partial q(x)}
	= - \phi(q,b,\lambda,x)\psi(q,\lambda,x)(1+x)^{-r}
\end{equation*}
and
\begin{equation*}
\frac{\partial w(q,b,\lambda_n(q,b))}{\partial q(x)}
	+ \dot{w}(q,b,\lambda_n(q,b))\frac{\partial\lambda_n(q,b)}{\partial q(x)}
	= 0,
\end{equation*}
the identity for the gradient is obtained after taking into account that
\begin{equation*}
\phi(q,b,\lambda_n(q,b),x) = \frac{\psi(q,\lambda_n(q,b),x)}{\psi(q,\lambda_n(q,b),0)},
\quad n\in\N,
\end{equation*}
which follows from the identity $\psi(q,z,x) = \psi(q,z,0)c(q,z,x) + \psi'(q,z,0)s(q,z,x)$.
\end{proof}

\begin{lemma}
\label{lem:denominator}
Assume $(q,b)\in\bsA_r\times\R$. Then,
\[
\norm{\psi(q,\lambda_n(q,b),\cdot)}^2_2
	= (\tfrac32\pi n)^{1/3} \left[1 + \bO(\omega_r(n))\right],
\]
uniformly on bounded subsets of $\bsA_r\times\R$.
\end{lemma}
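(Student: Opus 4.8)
The plan is to use formula \eqref{eq:this-again}, which expresses $\norm{\psi(q,\lambda,\cdot)}^2_2$ as the Wronskian-type quantity $\psi(q,\lambda,0)\dot\psi'(q,\lambda,0) - \psi'(q,\lambda,0)\dot\psi(q,\lambda,0)$, and evaluate it at $\lambda = \lambda_n(q,b)$. The starting point is that, at the eigenvalue, $\psi'(q,\lambda_n,0) = b\,\psi(q,\lambda_n,0)$, so the second term becomes $b\,\psi(q,\lambda_n,0)\dot\psi(q,\lambda_n,0)$; since $\psi(q,\lambda_n,0)$ and $\dot\psi(q,\lambda_n,0)$ are both $\bO(\sigma(\lambda_n)^{-1})$ by the estimates in \eqref{eq:vbe-psi0-prime}, \eqref{eq:about-Xi} and \eqref{eq:about-Xi-dot} (using $g_A(-\lambda_n)=1$ since $\lambda_n>0$, via Lemma~2.1), together with $\lambda_n(q,b)= -a_n' + \bO(n^{-2/3+\epsilon})$ from Lemma~\ref{lem:crude-asymp-eigenvalues}, this term is $\bO(n^{-1/6})$ and is absorbed into the error. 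So the whole quantity is, up to negligible terms, $\psi(q,\lambda_n,0)\dot\psi'(q,\lambda_n,0)$.

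Next I would compute each of these two factors asymptotically. For $\psi(q,\lambda_n,0) = \psi_0(\lambda_n,0) + \varXi(q,\lambda_n,0)$: the error $\varXi$ is $\bO(\om(q,\lambda_n)) = \bO(\omega_r(n))$ (using Remark~\ref{rem:emergence-omega_r} and $\sigma(\lambda_n)^{-1} = \bO(n^{-1/6})$), while $\psi_0(\lambda_n,0) = \sqrt\pi\,\ai(-\lambda_n)$, which by the standard asymptotics $\ai(-\lambda) = \pi^{-1/2}\lambda^{-1/4}[\cos(\tfrac23\lambda^{3/2}-\tfrac14\pi)+\bO(\lambda^{-3/2})]$ and $-\lambda_n = a_n' + \bO(n^{-2/3+\epsilon})$ evaluates to $(\tfrac32\pi n)^{-1/6}\cdot[\pm\text{(something)} + \ldots]$; one must track the oscillating cosine here carefully, noting that at $a_n'$ the derivative of $\ai$ vanishes, so one expands $\cos$ about the critical point and picks up the leading value. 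Similarly $\dot\psi'(q,\lambda_n,0) = \dot\psi_0'(\lambda_n,0) + \dot\varXi'(\ldots)$, and by \eqref{eq:more-bs} one has $\dot\psi_0'(\lambda) = \lambda\psi_0(\lambda) = \sqrt\pi\,\lambda\,\ai(-\lambda)$; this was already computed in the proof of Lemma~\ref{lem:crude-asymp-eigenvalues}, where it was shown that $(-a_n'+c_n)\psi_0(-a_n'+c_n) = (-1)^{n+1}(\tfrac32\pi n)^{1/2}[1+\bO(n^{-2/3+2\epsilon})]$. The product of the $\psi(q,\lambda_n,0)$ factor (which is $(-1)^{n+1}(\tfrac32\pi n)^{-1/6}[1 + \bO(\omega_r(n))]$ after the analogous computation, the signs of the two cosines matching so the $(-1)^{n+1}$ cancels in the product) with $\dot\psi_0'(\lambda_n,0) = (-1)^{n+1}(\tfrac32\pi n)^{1/2}[1+\bO(n^{-2/3+2\epsilon})]$ then yields $(\tfrac32\pi n)^{1/3}[1+\bO(\omega_r(n))]$, which is the claim.

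The main obstacle will be bookkeeping the oscillatory terms correctly: both $\ai(-\lambda_n)$ and $\dot\psi_0'(\lambda_n,0)$ carry a cosine evaluated near a zero of $\ai'$, and one needs to confirm that (i) these cosines are, up to the stated relative error, equal to $\pm 1$ times a common $n$-independent constant absorbing into the power of $n$, and (ii) the two $(-1)^{n+1}$ sign factors genuinely cancel in the product so that the result is positive (as it must be, being a squared $L^2$-norm). Concretely, one uses $\tfrac23(-a_n')^{3/2} = \pi(n-\tfrac34) + \bO(n^{-1})$ from \eqref{eq:zeros-of-ai-prime}, so $\cos(\tfrac23(-a_n')^{3/2}-\tfrac14\pi) = \cos(\pi n - \pi) + \bO(n^{-1}) = (-1)^{n+1} + \bO(n^{-1})$, together with the perturbation $c_n = \lambda_n + a_n' = -b/a_n' + \bO(n^{-1/3}\omega_r^2(n))$ contributing only a lower-order phase shift of size $\bO(n^{-1})$ inside the cosine argument (since $\tfrac{d}{d\lambda}\tfrac23\lambda^{3/2} = \lambda^{1/2} \sim n^{1/3}$ and $c_n \sim n^{-2/3}$). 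Once this is pinned down, everything else is routine substitution of the error estimates, and the uniformity on bounded subsets of $\bsA_r\times\R$ follows from the corresponding uniformity in Lemma~\ref{lem:crude-asymp-eigenvalues} and Remark~\ref{rem:emergence-omega_r}.
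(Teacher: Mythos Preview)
Your approach is the paper's: it writes $\norm{\psi}_2^2 = \psi_n\,\dot w_n$ (equivalent to your use of \eqref{eq:this-again} plus the eigenvalue condition $\psi' = b\psi$), expands via \eqref{eq:about-Xi}--\eqref{eq:about-Xi-dot}, and reduces everything to $\lambda_n\psi_0(\lambda_n)^2$. Two repairs are needed, though neither derails the argument.

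First, the claim $\dot\psi(q,\lambda_n,0) = \bO(\sigma(\lambda_n)^{-1})$ is false as stated: \eqref{eq:about-Xi-dot} gives $|\dot\varXi|\le C\bom\,\sigma$ (not $\sigma^{-1}$), and $|\dot\psi_0| = |\psi_0'|\le C_0\sigma$ by \eqref{eq:vbe-psi0-prime}. The crude bound is therefore $\dot\psi = \bO(n^{1/6})$, giving $b\,\psi\,\dot\psi = \bO(1)$; this is still absorbed, since relative to the main term $(\tfrac32\pi n)^{1/3}$ it contributes $\bO(n^{-1/3})\le \bO(\omega_r(n))$. You also glossed over the $\dot\varXi'$ term, whose bound $\bO(n^{1/2}\omega_r(n))$ is not among \eqref{eq:about-Xi}--\eqref{eq:about-Xi-dot} and needs a separate (but routine) Cauchy-type estimate; the paper handles it by the same mechanism.

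Second, you invoke $c_n = -b/a_n' + \bO(n^{-1/3}\omega_r^2(n))$, which is Theorem~\ref{thm:eigenvalues} and hence circular (that theorem uses the present lemma). Stay with Lemma~\ref{lem:crude-asymp-eigenvalues}: its $c_n = \bO(n^{-2/3+\epsilon})$ yields a phase shift $\lambda_n^{1/2}c_n = \bO(n^{-1/3+\epsilon})$ (not $\bO(n^{-1})$, an arithmetic slip in your text), and since the cosine is at an extremum this produces an $\bO(n^{-2/3+2\epsilon})$ error---exactly the paper's estimate.
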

\begin{proof}
In what follows we use the abbreviations $\psi_0(\lambda)\defeq\psi_0(\lambda,0)$ and
$\lambda_n\defeq\lambda_n(q,b)$. Clearly,
\begin{equation*}
\psi_0(\lambda_n)
	= \sqrt{\pi}\ai(-\lambda_n)
	= \lambda_n^{-1/4}
			\left[\cos\left(\tfrac23\lambda_n^{3/2}-\tfrac{\pi}{4}\right)
			+ \bO(\lambda_n^{-3/2})\right],
\end{equation*}
which, in conjunction with Lemma~\ref{lem:crude-asymp-eigenvalues}, implies
\begin{equation*}
\psi_0(\lambda_n)
	= (-1)^{n+1}(\tfrac32\pi n)^{-1/6}\left[1 + \bO(n^{-2/3+2\epsilon})\right]
\end{equation*}
so
\begin{equation*}
\lambda_n\psi_0(\lambda_n)^2
	= (-a_n')^{1/2}\left[1 + \bO(n^{-2/3+2\epsilon})\right]
	= (\tfrac32\pi n)^{1/3}\left[1 + \bO(n^{-2/3+2\epsilon})\right]
\end{equation*}
for arbitrary $\epsilon\in(0,1/6)$. Similarly,
\begin{equation*}
\psi_0'(\lambda_n) = \bO(n^{-1/6+\epsilon}).
\end{equation*}
Recalling that
\begin{equation*}
\norm{\psi(q,\lambda_n,\cdot)}^2_2
	= \dot{w}(q,b,\lambda_n)\psi(q,\lambda_n,0)
\end{equation*}
and then resorting to \eqref{eq:about-Xi}--\eqref{eq:about-Xi-dot} and \eqref{eq:w}, we can write
\begin{align*}
\norm{\psi(q,\lambda_n,\cdot)}^2_2
	=&\, \lambda_n\psi_0(\lambda_n)^2
		\left[1 + b \frac{\psi_0'(\lambda_n)}{\lambda_n\psi_0(\lambda_n)}
		+ \frac{\varXi(q,\lambda_n)}{\psi_0(\lambda_n)}
		+ \frac{\dot{\varXi}'(q,\lambda_n)}{\lambda_n\psi_0(\lambda_n)}
		- b \frac{\dot{\varXi}(q,\lambda_n)}{\lambda_n\psi_0(\lambda_n)}\right.
	\\
	 & + b \left. \frac{\psi_0'(\lambda_n)\varXi'(q,\lambda_n,0)}{\lambda_n\psi_0(\lambda_n)^2}
		+ \frac{\varXi(q,\lambda_n)\dot{\varXi}'(q,\lambda_n)}
					{\lambda_n\psi_0(\lambda_n)^2}
		- b\frac{\varXi(q,\lambda_n)\dot{\varXi}(q,\lambda_n)}
					{\lambda_n\psi_0(\lambda_n)^2}\right],
\end{align*}
where we assume $n$ large enough to avoid division by zero. Clearly,
\begin{equation*}
\frac{\psi_0'(\lambda_n)}{\lambda_n\psi_0(\lambda_n)}
	= \bO(n^{-2/3+\epsilon})
\implies
\frac{\psi_0'(\lambda_n)}{\lambda_n\psi_0(\lambda_n)}
	= \bO(\om_r(n)).
\end{equation*}
Also,
\begin{equation*}
\abs{\varXi(q,\lambda_n)}
	\le C\frac{\om(q,\lambda_n)}{\sigma(\lambda_n)}
\implies
\frac{\varXi(q,\lambda_n)}{\psi_0(\lambda_n)}
	= \bO(\omega_r(n)),
\end{equation*}
and
\begin{equation*}
\abs{\dot{\varXi}'(q,\lambda_n)}
	\le C\left[(1+\abs{\lambda_n}^{3/4})\bom(q,\lambda_n)
		+ \sigma(\lambda_n)\norm{q}_1\right]
\implies
\frac{\dot{\varXi}'(q,\lambda_n)}{\lambda_n\psi_0(\lambda_n)}
	= \bO(\omega_r(n)).
\end{equation*}
The remaining terms can be treated in a similar fashion, thus yielding the desired result.
\end{proof}

\begin{theorem}
\label{thm:eigenvalues}
Assume $(q,b)\in\bsA_r\times\R$. Then,
\begin{equation*}
\label{eq:eigenvalues}
\lambda_n(q,b)
	= - a_n' + \pi \frac{\int_0^\infty \ai^2(x+a_n')q(x)dx}{(-a_n')^{1/2}}
		- \frac{b}{a_n'}
		+ \bO(n^{-1/3}\omega_r^2(n)),
\end{equation*}
uniformly on bounded subsets of $\bsA_r\times\R$.
\end{theorem}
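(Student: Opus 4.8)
Here is the plan I would follow.

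The plan is to use the Pöschel--Trubowitz-type variational identity obtained by integrating the gradient of $\lambda_n(\cdot,b)$ along the ray $t\mapsto tq$, $t\in[0,1]$. By Proposition~\ref{lem:eingenvalue-is-real-analytic} the map $t\mapsto\lambda_n(tq,b)$ is real analytic, and since the $(1+x)^{-r}$ weight in $\partial\lambda_n/\partial q$ cancels against the $\sA_r$ inner product, its derivative is $\int_0^\infty\eta_n^2(tq,b,x)q(x)\,dx$. Hence
\begin{equation}
\label{eq:plan-var}
\lambda_n(q,b)-\lambda_n(0,b)
	= \int_0^1\!\left(\int_0^\infty
		\frac{\psi^2\bigl(tq,\lambda_n(tq,b),x\bigr)}
			{\norm{\psi(tq,\lambda_n(tq,b),\cdot)}_2^2}\,q(x)\,dx\right)\!dt .
\end{equation}
Proposition~\ref{prop:spectral-data-unperturbed-mixed-bc} gives $\lambda_n(0,b)=-a_n'-b/a_n'+\bO(n^{-4/3})$, and $n^{-4/3}=\bO(n^{-1/3}\omega_r^2(n))$, so the theorem reduces to showing that the right-hand side of \eqref{eq:plan-var} equals $\pi(-a_n')^{-1/2}\int_0^\infty\ai^2(x+a_n')q(x)\,dx+\bO(n^{-1/3}\omega_r^2(n))$, uniformly on bounded subsets of $\bsA_r\times\R$.

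The first step --- and, I expect, the main obstacle --- is to sharpen the localization of Lemma~\ref{lem:crude-asymp-eigenvalues}. For fixed $t$, using \eqref{eq:vbe-psi0-prime} and \eqref{eq:about-Xi} (so that $\abs{\psi(tq,z,x)}^2\le C\,g_A^2(x-z)/\sigma^2(x-z)$ once $\om(tq,z)$ is small), Lemma~\ref{lem:denominator}, $g_A\le1$, and the elementary inequality $\sigma(w)^{-2}\le(1+\abs w)^{-1/2}$, the inner integrand in \eqref{eq:plan-var} is bounded in modulus by $C\,n^{-1/3}\om(q,\lambda_n(tq,b))$; by Remark~\ref{rem:omegas} and the crude asymptotics $\lambda_n(tq,b)\sim(\tfrac32\pi n)^{2/3}$ this is $\bO(n^{-1/3}\omega_r(n))$, uniformly on bounded sets. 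Running \eqref{eq:plan-var} with upper limit $t$ instead of $1$ then yields $\abs{\lambda_n(tq,b)-\lambda_n(0,b)}=\bO(n^{-1/3}\omega_r(n))$, whence, again by Proposition~\ref{prop:spectral-data-unperturbed-mixed-bc},
\[
\abs{\lambda_n(tq,b)+a_n'}=\bO(n^{-1/3}\omega_r(n)),\qquad t\in[0,1],
\]
uniformly on bounded sets. This refinement is what makes the argument close: the crude bound $\bO(n^{-2/3+\epsilon})$ would only produce an error of order $n^{-1+\epsilon}$ below, which is not absorbed by $\bO(n^{-1/3}\omega_r^2(n))$.

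Next I would extract the leading term of the inner integral. Writing $\psi(tq,z,\cdot)=\psi_0(z,\cdot)+\varXi(tq,z,\cdot)$, and using $\partial_z\psi_0^2=2\psi_0\dot\psi_0=-2\psi_0\psi_0'$ (from \eqref{eq:more-bs}) together with the mean value theorem for the real map $z\mapsto\psi_0^2(z,x)$ between $z=\lambda_n(tq,b)$ and $z=-a_n'$, one gets $\psi^2(tq,\lambda_n(tq,b),x)=\psi_0^2(-a_n',x)+R_n(t,x)$ with
\[
\left|\int_0^\infty R_n(t,x)q(x)\,dx\right|
	\le C\abs{\lambda_n(tq,b)+a_n'}\,\norm{q}_1
		+C\,\om(tq,\lambda_n(tq,b))\,\om(q,\lambda_n(tq,b)) .
\]
Here the first term uses $\abs{\psi_0\psi_0'}\le C_0^2 g_A^2\le C_0^2$ (valid since $g_A\le1$ on the relevant positive arguments, which occur for large $n$) and the refined bound just proved, while the second uses \eqref{eq:about-Xi}, $\abs{\psi_0}\le C_0 g_A/\sigma$, $g_A\le1$, and Remark~\ref{rem:emergence-omega_r}. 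Since $\omega_r(n)\ge n^{-1/3}$ gives $n^{-1/3}\omega_r(n)\le\omega_r^2(n)$, both terms are $\bO(\omega_r^2(n))$; and $\psi_0^2(-a_n',x)=\pi\ai^2(x+a_n')$, so
\[
\int_0^\infty\psi^2(tq,\lambda_n(tq,b),x)q(x)\,dx
	=\pi\int_0^\infty\ai^2(x+a_n')q(x)\,dx+\bO(\omega_r^2(n)),
\]
uniformly in $t\in[0,1]$.

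Finally I would assemble the pieces. By Lemma~\ref{lem:denominator} and \eqref{eq:zeros-of-ai-prime}, $\norm{\psi(tq,\lambda_n(tq,b),\cdot)}_2^2=(-a_n')^{1/2}\bigl[1+\bO(\omega_r(n))\bigr]$ uniformly in $t$. Dividing, and noting that $\pi\bigl|\int_0^\infty\ai^2(x+a_n')q(x)\,dx\bigr|\le C\,\om(q,-a_n')\le C\omega_r(n)$ --- so the main term is itself of size $\bO(n^{-1/3}\omega_r(n))$, and the relative factor $[1+\bO(\omega_r(n))]^{-1}$ it acquires only contributes a further $\bO(n^{-1/3}\omega_r^2(n))$ --- while the $R_n$-contribution divided by $(-a_n')^{1/2}\sim n^{1/3}$ is also $\bO(n^{-1/3}\omega_r^2(n))$, one finds that the inner integral in \eqref{eq:plan-var} equals $\pi(-a_n')^{-1/2}\int_0^\infty\ai^2(x+a_n')q(x)\,dx+\bO(n^{-1/3}\omega_r^2(n))$, uniformly in $t$. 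Integrating this over $t\in[0,1]$ (the main term being $t$-independent) and combining with the value of $\lambda_n(0,b)$ via \eqref{eq:plan-var} gives the asserted formula with the stated uniformity.
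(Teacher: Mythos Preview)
Your proposal is correct and follows essentially the same approach as the paper's proof: the variational identity \eqref{eq:plan-var}, a first pass using Lemma~\ref{lem:denominator} to obtain the intermediate bound $\lambda_n(tq,b)-\lambda_n(0,b)=\bO(n^{-1/3}\omega_r(n))$, and then a second pass replacing $\psi_0^2(\lambda_n(tq,b),\cdot)$ by $\psi_0^2(-a_n',\cdot)$ via a mean-value/FTC argument. The only (cosmetic) differences are that the paper separates the $\varXi$-remainder from the $\lambda$-shift and splits the latter into two pieces $I_1$ (from $\lambda_n(tq,b)$ to $\lambda_n(0,b)$) and $I_2$ (from $\lambda_n(0,b)$ to $-a_n'$), whereas you bundle all of this into a single remainder $R_n$; your packaging is slightly more economical but the ingredients and error bookkeeping are identical.
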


\begin{proof}
It suffices to prove the assertion on balls $\cV\subset\bsA_r$ centered a $q\equiv0$.
Let $\cU$ be the induced bounded subset in $\sA_r$, and let $\cJ$ be a bounded subset of $\R$.
In order to somewhat unclutter the exposition of this proof, we write
$\inner{\cdot}{\cdot}$ and $\norm{\cdot}$ instead of $\inner{\cdot}{\cdot}_2$ and $\norm{\cdot}_2$.

Given $q\in\cV$, we clearly have $tq\in\cV$, hence $tq\in\cU$, whenever $t\in[0,1]$.
Then, Proposition~\ref{lem:eingenvalue-is-real-analytic} leads to
\begin{equation*}
	\lambda_n(q,b) - \lambda_n(0,b)
	= \int_0^1 \frac{d}{dt}\lambda_n(tq,b)\, dt
	= \int_0^1 \inner{\eta^2_n(tq,b,\cdot)}{q}dt.
\end{equation*}
Resorting to \eqref{eq:about-Xi}, we can write
\begin{align}
	\lambda_n(q,b) - \lambda_n(0,b)
	=& \int_0^1 \frac{1}{\norm{\psi(tq,\lambda_n(tq,b),\cdot)}^2}
			\inner{\psi_0^2(\lambda_n(tq,b),\cdot)}{q} dt
				\nonumber
	\\
	 & + \int_0^1 \frac{1}
			{\norm{\psi(tq,\lambda_n(tq,b),\cdot)}^2}
			\inner{\varPsi_n(tq,\lambda_n(tq,b),\cdot)}{q} dt , \label{eq:decomposition}
\end{align}
where
\begin{equation*}
	\varPsi_n(tq,\lambda_n(tq,b),x) \defeq
		2\psi_0(\lambda_n(tq,b),x)
			\varXi(tq,\lambda_n(tq,b),x)
				+\varXi^2(tq,\lambda_n(tq,b),x).
\end{equation*}
Observe that, because of the first inequality in \eqref{eq:vbe-psi0-prime},
\begin{equation*}
	\abs{\inner{\psi_0^2(\lambda_n(tq,b),\cdot)}{q}}
		\le
			\pi C_0^2 \int_0^\infty \frac{g_A^2(x-\lambda_n(tq,b))}
				{\sigma(x-\lambda_n(tq,b))^2}\abs{q(x)}dx
					\le C\om(q,\lambda_n(tq,b)),
\end{equation*}
which  implies
\begin{equation*}
	\inner{\psi_0^2(\lambda_n(tq,b),\cdot)}{q}
		= \bO( \omega_r(n))
\end{equation*}
uniformly on $\cU\times\cJ$, hence on $\cV\times\cJ$. Then, Lemma \ref{lem:denominator} implies
\begin{equation}
	\label{eq:1st-term}
		\int_0^1 \frac{1}{\norm{\psi\left(tq,\lambda_n\left(tq,b\right),\cdot\right)}^2}
			\inner{\psi_0^2\left(\lambda_n\left(tq,b\right),\cdot\right)}{q} dt =
				\bO( n^{-1/3}\omega_r(n)),
\end{equation}
uniformly on $\cV\times\cJ$. On the other hand, due to \eqref{eq:about-Xi},
\begin{equation*}
	\abs{\varPsi_n(tq,\lambda_n\left(tq,b\right),x)}
		\le C \frac{\om(tq,\lambda_n(tq,b))}
				{( 1+\abs{x-\lambda_n\left(tq,b\right)})^{1/2}},
\end{equation*}
hence
\begin{equation*}
	\inner{\varPsi_n(tq,\lambda_n\left(tq,b\right),\cdot)}{q}
		= \bO(\omega_r^2(n)),
\end{equation*}
uniformly on $\cV\times\cJ$. Consequently, Lemma \ref{lem:denominator} implies,
\begin{equation}
	\label{eq:2nd-term}
		\int_0^1 \frac{1}{\norm{\psi\left(tq,\lambda_n\left(tq,b\right),\cdot\right)}^2}
		\inner{\varPsi_n(tq,\lambda_n\left(tq,b\right),\cdot)}{q} dt
			= \bO( n^{-1/3}\omega_r^2(n))
\end{equation}
uniformly on $\cV\times\cJ$.
Thus, in view of \eqref{eq:decomposition}--\eqref{eq:2nd-term}, so far we obtain
\begin{equation}
\label{eq:uniform-estimate}
	\lambda_n(q,b) - \lambda_n(0,b) = \bO(n^{-1/3}\omega_r(n)),
\end{equation}
uniformly on $\cV\times\cJ$.

Let us go back to \eqref{eq:1st-term} and write
\begin{equation}
\label{eq:decomposition-inner-products}
\inner{\psi_0^2(\lambda_n(tq,b),\cdot)}{q}
	= \inner{\psi_0^2(-a_n',\cdot)}{q} + I_1 + I_2,
\end{equation}
where
\begin{equation*}
I_1 = \inner{\psi_0^2(\lambda_n(tq,b),\cdot)-
		\psi_0^2(\lambda_n(0,b),\cdot)}{q} \quad\text{and}\quad
I_2 = \inner{\psi_0^2(\lambda_n(0,b),\cdot) -
		\psi_0^2(-a_n',\cdot)}{q}.
\end{equation*}
Let us abbreviate $\lambda_1\defeq \lambda_n(0,b)$ and $\lambda_2\defeq \lambda_n(tq,b)$.
We observe that
\begin{equation*}
\ai^2\left(x-\lambda_2\right) - \ai^2\left(x-\lambda_1\right)
	= - 2 \int_{\lambda_1}^{\lambda_2}\ai\left(x- u\right)\ai'\left(x- u\right) du.
\end{equation*}
Thus,
\begin{align*}
\abs{\ai^2\left(x-\lambda_2\right) - \ai^2\left(x-\lambda_1\right)}
	&\le 2 \int_{\lambda_1}^{\lambda_2}\abs{\ai\left(x- u\right)}\abs{\ai'\left(x- u)\right)} du
	 \le 2 C_0^2 \abs{\lambda_2 - \lambda_1},
\end{align*}
where we use \eqref{eq:vbe-psi0-prime} and the fact that $g_A(x)\le 1$ on the real line.
Therefore,
\begin{align*}
\abs{I_1}
	& \le \norm{q}_{\cA_r}
	\left(\int_0^\infty\abs{\psi_0^2(\lambda_n(tq,b),x) - \psi_0^2(\lambda_n(0,b),x)}^2 (1+x)^{-r}
	dx\right)^{1/2}
	\\
	&\le 2 C_0^2\pi \norm{q}_{\cA_r}
	\abs{\lambda_n(tq,b)-\lambda_n(0,b)}
	\left(\int_0^\infty(1+x)^{-r}dx\right)^{1/2},
\end{align*}
which in turn implies
\begin{equation}
\label{eq:1st-inner}
I_1 = \bO( n^{-1/3}\omega_r(n))
\end{equation}
uniformly on $\cV\times\cJ$, as a consequence of \eqref{eq:uniform-estimate}.
Resorting to an analogous reasoning, from Proposition \ref{prop:spectral-data-unperturbed-mixed-bc}
we obtain
\begin{equation}
\label{eq:2nd-inner}
I_2 = \bO(n^{-2/3})
\end{equation}
uniformly on $\cJ\times\cV$.

Summarizing, from \eqref{eq:decomposition-inner-products}--\eqref{eq:2nd-inner} we obtain
\begin{equation*}
\inner{\psi_0^2(\lambda_n(tq,b),\cdot)}{q}
	= \inner{\psi_0^2(-a_n',\cdot)}{q}
	+ \bO( n^{-1/3}\omega_r(n))
\end{equation*}
uniformly on $\cV\times\cJ$. Therefore, Lemma \ref{lem:denominator} yields,
\begin{equation*}
\label{eq:1st-term-refined}
\int_0^1 \frac{1}{\norm{\psi(tq,\lambda_n(tq,b),\cdot)}^2}
	\inner{\psi_0^2(\lambda_n(tq,b),\cdot)}{q}dt
	= \frac{\inner{\psi_0^2(-a_n',\cdot)}{q}}
	{(\frac{3}{2}\pi n)^{1/3}} + \bO( n^{-1/3}\omega^2_r(n)),
\end{equation*}
that is,
\begin{equation*}
\lambda_n(q,b)-\lambda_n(0,b)
	= \frac{\inner{\psi_0^2\left(-a_n',\cdot\right)}{q}}
		{(\frac{3}{2}\pi n)^{1/3}} + \bO( n^{-1/3}\omega^2_r(n)).
\end{equation*}
Then, the stated asymptotic expansion follows after applying \eqref{eq:zeros-of-ai-prime}
and Proposition \ref{prop:spectral-data-unperturbed-mixed-bc}.
\end{proof}


\section{The norming constants}
\label{sec:norming-constants}

For the sake of brevity, in what follows we use the notation
\[
\lambda_n	\defeq \lambda_n(q,b),\quad
\psi_n		\defeq \psi(q,\lambda_n,0),\quad
w_n			\defeq w(q,b,\lambda_n),\quad
\dot{w}_n			\defeq \dot{w}(q,b,\lambda_n)
\]
and so on.

\begin{proposition}
\label{lem:norming-constants-are-analytic}
Given $n\in\N$ and $b\in\R$, let $\kappa_n(\cdot,b):\sA_r\to\R$ be defined by
\eqref{eq:norming-constants}, that is,
\begin{equation*}
\kappa_n(q,b)
	= \log\left(\frac{\psi(q,\lambda_n(q,b),0)}{\dot{w}(q,b,\lambda_n(q,b))}\right).
\end{equation*}
Then, $\kappa_n(\cdot,b)$ is a real analytic map. Moreover,
\begin{equation*}
\frac{\partial\kappa_n}{\partial q(x)}
	= \left(\frac{1}{\psi_n}\frac{\partial\psi_n}{\partial q(x)}
		- \frac{1}{\dot{w}_n}\frac{\partial\dot{w}_n}{\partial q(x)}\right)
		+ \left(\frac{\dot{\psi}_n}{\psi_n} - \frac{\ddot{w}_n}{\dot{w}_n}\right)
		\frac{\partial\lambda_n}{\partial q(x)}.
\end{equation*}
\end{proposition}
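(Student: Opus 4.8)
The plan is to establish real analyticity of $\kappa_n(\cdot,b)$ by expressing it as a composition of maps already known to be analytic, and then to compute the gradient by the chain rule. First I would recall that by Proposition~\ref{lem:eingenvalue-is-real-analytic} the map $q\mapsto\lambda_n(q,b)$ is real analytic on $\sA_r$, and by the Corollary (together with the preceding Lemma on the analyticity of $\psi(\cdot,z,x)$, $\dot\psi(\cdot,z,x)$, etc.) the maps $w(\cdot,\cdot,z)$ and $\dot w(\cdot,\cdot,z)$ are analytic on $\sA_r^\C\times\C$. Since $\psi(\cdot,z,0)$ is likewise analytic, the compositions $q\mapsto \psi(q,\lambda_n(q,b),0)=\psi_n$ and $q\mapsto\dot w(q,b,\lambda_n(q,b))=\dot w_n$ are real analytic as compositions of analytic maps with the analytic map $q\mapsto(q,\lambda_n(q,b))$. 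To pass through the logarithm one needs $\psi_n/\dot w_n$ to stay in a region where the principal branch is analytic; here one uses that, by \eqref{eq:norming-constants}, $\psi_n/\dot w_n = e^{\kappa_n(q,b)} = \abs{\psi_n}^2/\norm{\psi(q,\lambda_n,\cdot)}_2^2 > 0$ for real-valued $q$, so on a complex neighborhood of any real $q$ the quotient stays in a neighborhood of the positive real axis, away from the branch cut, and the logarithm is analytic there. Hence $\kappa_n(\cdot,b)$ is real analytic.

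For the gradient, I would differentiate the relation $\kappa_n(q,b)=\log\psi(q,\lambda_n(q,b),0)-\log\dot w(q,b,\lambda_n(q,b))$ with respect to $q(x)$, remembering that $\lambda_n$ itself depends on $q$. Writing $\Psi(q)\defeq\psi(q,\lambda_n(q,b),0)$, the chain rule gives
\begin{equation*}
\frac{\partial}{\partial q(x)}\log\Psi(q)
	= \frac{1}{\psi_n}\left(\frac{\partial\psi}{\partial q(x)}(q,\lambda_n,0)
		+ \dot\psi(q,\lambda_n,0)\frac{\partial\lambda_n}{\partial q(x)}\right)
	= \frac{1}{\psi_n}\frac{\partial\psi_n}{\partial q(x)}
		+ \frac{\dot\psi_n}{\psi_n}\frac{\partial\lambda_n}{\partial q(x)},
\end{equation*}
where $\partial\psi_n/\partial q(x)$ denotes the partial gradient at fixed spectral parameter $\lambda_n$ (the one recorded in Remark~\ref{rem:everything-has-gradient}). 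Similarly, since $\dot w(q,b,\lambda)$ depends on $q$ both explicitly and through $\lambda$,
\begin{equation*}
\frac{\partial}{\partial q(x)}\log\dot w_n
	= \frac{1}{\dot w_n}\left(\frac{\partial\dot w}{\partial q(x)}(q,b,\lambda_n)
		+ \ddot w(q,b,\lambda_n)\frac{\partial\lambda_n}{\partial q(x)}\right)
	= \frac{1}{\dot w_n}\frac{\partial\dot w_n}{\partial q(x)}
		+ \frac{\ddot w_n}{\dot w_n}\frac{\partial\lambda_n}{\partial q(x)}.
\end{equation*}
Subtracting the two displays and collecting the $\partial\lambda_n/\partial q(x)$ terms yields exactly the stated formula.

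The only genuinely delicate point is the one flagged above: ensuring that the logarithm can be applied analytically, i.e. that the analytic extension of $q\mapsto\psi_n/\dot w_n$ to a complex neighborhood of a real $q$ takes values off the branch cut of $\log$. This is handled by the positivity of $e^{\kappa_n(q,b)}$ for real $q$ (from \eqref{eq:norming-constants}, recalling that $\norm{\psi(q,\lambda_n,\cdot)}_2^2\ne 0$ and $\psi_n\ne 0$, the latter because $\psi_n=0$ together with $w_n=0$ would force $\psi(q,\lambda_n,\cdot)\equiv 0$) plus continuity of the extension; all remaining steps are routine applications of the Implicit Function Theorem consequences already in hand and of the chain rule for Fréchet derivatives.
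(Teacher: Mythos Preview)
Your proposal is correct and follows essentially the same approach as the paper, which simply notes that $\kappa_n$ is a composition of real analytic maps and that the gradient follows from a straightforward computation. You have merely filled in the details the paper leaves implicit, including the justification that $\psi_n/\dot{w}_n>0$ so the logarithm applies analytically.
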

\begin{proof}
Clearly, $\kappa_n$ is a composition of real analytic maps. Its gradient
with respect to $q$ follows after a straightforward computation.
\end{proof}

Let us define
\begin{equation}
\label{eq:A-B}
A_n(q,b,x)
	\defeq \frac{1}{\psi_n}\frac{\partial\psi_n}{\partial q(x)}
			- \frac{1}{\dot{w}_n}\frac{\partial\dot{w}_n}{\partial q(x)}
\quad\text{and}\quad
B_n(q,b)
	\defeq \frac{\dot{\psi}_n}{\psi_n} - \frac{\ddot{w}_n}{\dot{w}_n}.
\end{equation}
Clearly,
\begin{equation*}
\frac{\partial\kappa_n}{\partial q(x)}(q,b)
	= A_n(q,b,x) + B_n(q,b)\frac{\partial\lambda_n}{\partial q(x)}(q,b).
\end{equation*}

Let us look at the first definition in \eqref{eq:A-B}. We have
\begin{align*}
(1+x)^{r}A_n(q,b,x)
	=&\, \frac{1}{\psi_n}s(q,\lambda_n,x)\psi(q,\lambda_n,x)
	\\
	 & + \frac{1}{\dot{w}_n}
		\left[\dot{\phi}(q,b,\lambda_n,x)\psi(q,\lambda_n,x)
			+ \phi(q,b,\lambda_n,x)\dot{\psi}(q,\lambda_n,x)\right],
\end{align*}
where $\phi(q,b,z,x)$ is defined in \eqref{eq:the-other-function}.
It will be convenient to write $A_n(q,b,x)$ as follows,
\begin{align}
\label{eq:big-decomposition}
(1+x)^{r}A_n(q,b,x)
	=& \left(1+x\right)^{r}A_n^0(q,b,x)
	\nonumber
	\\
	 & + \left(\frac{\lambda_n\dot{W}_n}{\dot{\tau}_n\dot{w}_n}
			- \frac{\varXi_n}{\alpha_n\psi_n}\right)s_0(\lambda_n,x)\psi_0(\lambda_n,x)
	   - \frac{b\dot{W}_n}{\dot{\tau}_n\dot{w}_n}c_0(\lambda_n,x)\psi_0(\lambda_n,x)
	\nonumber
	\\
	 & + \frac{\dot{W}_n}{\dot{\tau}_n\dot{w}_n}
	 		\left[\phi'_0(b,\lambda_n,x)\psi_0(\lambda_n,x)
			+ \phi_0(b,\lambda_n,x)\psi'_0(\lambda_n,x)\right]
	\nonumber
	\\[1mm]
	 & + \frac{\varLambda_1(q,\lambda_n,x)}{\psi_n}
	   + \frac{\varLambda_2(q,b,\lambda_n,x)}{\dot{w}_n},
\end{align}
where
\begin{align*}
(1+x)^r A_n^0(q,b,x)
	\defeq& \left(\frac{\tau_n^\times}{\dot{\tau}_n} - \frac{\beta_n}{\alpha_n}\right)
			\psi_0^2(\lambda_n,x)
			- 2\frac{\beta_n' - b\beta_n}{\dot{\tau}_n}\psi_0(\lambda_n,x)\psi_0'(\lambda_n,x)
	\\
		  & + \frac{\alpha_n' - b\alpha_n}{\dot{\tau}_n}
		  	\left[\psi_0(\lambda_n,x)\theta_0'(\lambda_n,x)
		  	+ \psi_0'(\lambda_n,x)\theta_0(\lambda_n,x)\right],
\end{align*}
\begin{equation*}
\Lambda_1(q,z,x)
	\defeq s_0(z,x)\varXi(q,z,x) + \psi_0(z,x)\varUpsilon_s(q,z,x)
		+ \varUpsilon_s(q,z,x)\varXi(q,z,x)
\end{equation*}
and
\begin{align*}
\Lambda_2(q,b,z,x)
	\defeq&\,\dot{\phi}_0(b,z,x)\varXi(q,z,x) + \psi_0(z,x)\dot{\varPhi}(q,b,z,x)
			+ \dot{\varPhi}(q,b,z,x)\varXi(q,z,x)
	\\
		  & + \phi_0(b,z,x)\dot{\varXi}(q,z,x) - \psi_0'(z,x)\varPhi(q,b,z,x)
			+ \varPhi(q,b,z,x)\dot{\varXi}(q,z,x).
\end{align*}
The remaining new notation introduced in the preceding discussion is defined in
Remark~\ref{rem:alphas-and-betas}.
We note that the main contribution in \eqref{eq:big-decomposition} to the norming constant
is $A_n^0(q,b,x)$.

\begin{lemma}
\label{lem:A_n^0}
Assume $(q,b)\in\bsA_r\times\R$. Then, $A_n^0(q,b,\cdot)\in\sA_r$ and
\[
\int_0^1\inner{A_n^0(tq,b,\cdot)}{q}_{\sA_r} dt
	= \pi\frac{\int_0^\infty \ai^2(x+a_n')(q'(x)+bq(x)) dx}{(\tfrac32\pi n)^{1/3}}
		- \frac{q(0)}{(\tfrac32\pi n)^{2/3}}
		+ \bO(n^{-1/6}\om_r^2(n)),
\]
uniformly whenever $(q,q(0),b)$ belongs to bounded subsets of $\bsA_r\times\R\times\R$.
\end{lemma}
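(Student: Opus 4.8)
The plan has four ingredients: an integrability check; an algebraic reduction of the scalar coefficients of $A_n^0$; an integration by parts; and a final assembly using the eigenvalue asymptotics. For the first, note that by the definition of $A_n^0$ given just before the statement, $(1+x)^rA_n^0(q,b,x)$ is a fixed linear combination of $\psi_0^2(\lambda_n,x)$, $\psi_0(\lambda_n,x)\psi_0'(\lambda_n,x)$ and $\psi_0(\lambda_n,x)\theta_0'(\lambda_n,x)+\psi_0'(\lambda_n,x)\theta_0(\lambda_n,x)$, whose coefficients are finite in view of the Wronskian relation below. By \eqref{eq:vbe-psi0-prime}, the companion bounds on $\theta_0,\theta_0'$, and the facts $g_Ag_B\equiv1$, $g_A\le1$ on $\R$ and $\sigma\ge1$, each of these three functions is bounded on $\R_+$ by an absolute constant; hence $(1+x)^rA_n^0(q,b,\cdot)\in L^\infty(\R_+)$, and since $r>1$ this gives $A_n^0(q,b,\cdot)\in L^2(\R_+,(1+x)^r\,dx)=\sA_r$.

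For the algebraic reduction I would use the notation of Remark~\ref{rem:alphas-and-betas}. The identity $W(\psi_0(\lambda_n),\theta_0(\lambda_n))\equiv1$ at $x=0$ reads $\alpha_n\beta_n'-\alpha_n'\beta_n=1$, whence short manipulations give
\[
\frac{\tau_n^\times}{\dot\tau_n}-\frac{\beta_n}{\alpha_n}=\frac{b}{\alpha_n\dot\tau_n},
\qquad
\beta_n'-b\beta_n=\frac{1+(\alpha_n'-b\alpha_n)\beta_n}{\alpha_n}.
\]
Moreover $w(q,b,\lambda_n)=0$ gives $\alpha_n'-b\alpha_n=b\bigl(\psi(q,\lambda_n,0)-\psi_0(\lambda_n,0)\bigr)-\bigl(\psi'(q,\lambda_n,0)-\psi_0'(\lambda_n,0)\bigr)$, which by \eqref{eq:about-Xi}--\eqref{eq:about-Xi-prime} and Remark~\ref{rem:emergence-omega_r} is $\bO(\sigma(\lambda_n)\omega_r(n))=\bO(n^{1/6}\omega_r(n))$. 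Feeding these into Lemma~\ref{lem:crude-asymp-eigenvalues} and (the proof of) Lemma~\ref{lem:denominator} — in particular into $\lambda_n\psi_0^2(\lambda_n,0)=(\tfrac32\pi n)^{1/3}(1+o(1))$ — I would record $\alpha_n\dot\tau_n=(\tfrac32\pi n)^{1/3}(1+o(1))$, $(\beta_n'-b\beta_n)/\dot\tau_n=(\tfrac32\pi n)^{-1/3}(1+o(1))$ and $(\beta_n'-b\beta_n)\alpha_n^2/\dot\tau_n=(\tfrac32\pi n)^{-2/3}(1+o(1))$, keeping explicit track of the error sizes.

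Next I would expand $\inner{A_n^0(tq,b,\cdot)}{q}_{\sA_r}=\int_0^\infty[(1+x)^rA_n^0(tq,b,x)]q(x)\,dx$ and integrate by parts in the last two bilinear terms via $\psi_0\psi_0'=\tfrac12\partial_x(\psi_0^2)$ and $\psi_0\theta_0'+\psi_0'\theta_0=\partial_x(\psi_0\theta_0)$. Since $\psi_0^2(\lambda_n,\cdot)$ and $\psi_0(\lambda_n,\cdot)\theta_0(\lambda_n,\cdot)$ vanish at $+\infty$ while $q\in\bsA_r$ forces $q(x)\to0$, this produces the boundary terms $-\tfrac12\alpha_n^2q(0)$ and $-\alpha_n\beta_nq(0)$ together with integrals of $\psi_0^2(\lambda_n,\cdot)$ and $\psi_0(\lambda_n,\cdot)\theta_0(\lambda_n,\cdot)$ against $q'$ — this is where the derivative $q'$ enters the final formula, and the reason $q\in\bsA_r$ (rather than merely $q\in\sA_r$) is assumed; all such integrals are $\bO(\omega_r(n))$ by Remark~\ref{rem:emergence-omega_r}, \eqref{eq:vbe-psi0-prime} and $\abs{\psi_0(\lambda_n,x)\theta_0(\lambda_n,x)}\le 2C_0^2\sigma(x-\lambda_n)^{-2}$ (from $g_Ag_B\equiv1$). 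Replacing $\lambda_n(tq,b)$ by $-a_n'$ inside the integrands — an error of order $\abs{\lambda_n(tq,b)+a_n'}=\bO(n^{-1/3}\omega_r(n))$ by Theorem~\ref{thm:eigenvalues} together with the Lipschitz bounds $\abs{\psi_0^2(z_1,x)-\psi_0^2(z_2,x)}+\abs{\psi_0(z_1,x)\theta_0(z_1,x)-\psi_0(z_2,x)\theta_0(z_2,x)}\le C\abs{z_1-z_2}$ — makes the integrands independent of $t$, so $\int_0^1(\cdot)\,dt$ is trivial, and the main contributions become $\frac{b}{\alpha_n\dot\tau_n}\int_0^\infty\psi_0^2(-a_n',x)q(x)\,dx=\pi b\,(\tfrac32\pi n)^{-1/3}\int_0^\infty\ai^2(x+a_n')q(x)\,dx+\cdots$ from the $\psi_0^2$ term, $\pi\,(\tfrac32\pi n)^{-1/3}\int_0^\infty\ai^2(x+a_n')q'(x)\,dx$ from the integration by parts of the $\psi_0\psi_0'$ term, and $-q(0)(\tfrac32\pi n)^{-2/3}$ from the boundary terms (using the coefficient asymptotics above and \eqref{eq:zeros-of-ai-prime}). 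Everything else — the $\theta_0$-bilinear term, which carries the small factor $\alpha_n'-b\alpha_n$, the $q'$-integral arising from its integration by parts, and the second-order corrections from the reduction step — is $\bO(n^{-1/6}\omega_r^2(n))$ uniformly for $(q,q(0),b)$ in bounded subsets of $\bsA_r\times\R\times\R$, and the three main contributions assemble into the claimed identity.

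The delicate point is the $\theta_0$-bilinear term. Because $\theta_0$ is the exponentially growing solution, the crude bound $\abs{\psi_0\theta_0'+\psi_0'\theta_0}\le C$ only yields $\bO(\norm{q}_1)=\bO(1)$ for its integral against $q$, which after multiplication by the coefficient $(\alpha_n'-b\alpha_n)/\dot\tau_n=\bO(n^{1/6}\omega_r(n)\,n^{-1/2})=\bO(n^{-1/3}\omega_r(n))$ leaves a contribution far larger than the target error; the integration by parts is precisely what saves it, converting the integrand into $\psi_0(\lambda_n,\cdot)\theta_0(\lambda_n,\cdot)=\bO(\sigma(x-\lambda_n)^{-2})$ tested against $q'$ and thereby lowering the contribution to $\bO(n^{-1/3}\omega_r^2(n))$, which is admissible. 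A secondary annoyance is propagating the $\sigma(\lambda_n)\asymp n^{1/6}$ growth present in the estimate \eqref{eq:about-Xi-prime} and in the $z$-derivatives through the coefficient asymptotics with enough precision to keep the total error at $\bO(n^{-1/6}\omega_r^2(n))$.
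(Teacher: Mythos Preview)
Your plan is correct and follows the same route as the paper: integrate by parts in the last two terms of $(1+x)^rA_n^0$, use the coefficient asymptotics (packaged in the paper as Lemmas~\ref{lem:alpha-etcetera} and~\ref{lem:pieces} in the Appendix), and then replace $\lambda_n(tq,b)$ by $-a_n'$ in the resulting integrals via Theorem~\ref{thm:eigenvalues}. Your inline derivation of the coefficients via the exact Wronskian identity $\alpha_n\beta_n'-\alpha_n'\beta_n=1$ (giving $\tfrac{\tau_n^\times}{\dot\tau_n}-\tfrac{\beta_n}{\alpha_n}=\tfrac{b}{\alpha_n\dot\tau_n}$ exactly rather than only asymptotically) and of the smallness of $\alpha_n'-b\alpha_n$ via the eigenvalue equation $w(q,b,\lambda_n)=0$ is a minor but tidy variant of the paper's argument.
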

\begin{proof}
Let us suppose that $q$ lies in a ball $\cV\subset\bsA_r$ so $tq$
lies in a fixed bounded subset of $\sA_r$ for all $t\in[0,1]$. Also, assume $b$ lies in
some bounded interval $\cJ\subset\R$. We write $\inner{\cdot}{\cdot}$ to denote $\inner{\cdot}{\cdot}_2$.

Integration by parts yields
\begin{align}
\label{eq:long-summer}
\inner{A_n^0(tq,b,\cdot)}{q}_{\sA_r}
	=& \left(\frac{\tau_n^\times}{\dot{\tau}_n} - \frac{\beta_n}{\alpha_n}\right)
		\inner{\psi_0^2(\lambda_n(tq,b),\cdot)}{q}
		- \frac{\alpha_n}{\dot{\tau}_n} q(0)\nonumber
	\\[1mm]
	 & + \frac{\beta_n' - b\beta_n}{\dot{\tau}_n}\inner{\psi_0^2(\lambda_n(tq,b),\cdot)}{q'}
	   + \frac{\alpha_n' - b\alpha_n}{\dot{\tau}_n}
		 \inner{(\psi_0\theta_0)(\lambda_n(tq,b),\cdot)}{q'}.
\end{align}
Clearly,
\[
\abs{\inner{\psi_0^2(\lambda_n(tq,b),\cdot)}{q}}
	\le C \bom(q,\lambda_n(tq,b)),
\]
which in turn implies
\[
\inner{\psi_0^2(\lambda_n(tq,b),\cdot)}{q} = \bO(\om_r(n)),
\]
uniformly on $\cV\times\cJ$. Analogous asymptotic formulas are shown to hold for the last two terms
in \eqref{eq:long-summer}. Applying Lemma~\ref{lem:pieces}, we obtain
\[
\inner{A_n^0(tq,b,\cdot)}{q}_{\sA_r}
	= \frac{\inner{\psi_0^2(\lambda_n(tq,b),\cdot)}{q' + bq}}{(\tfrac32\pi n)^{1/3}}
		- \frac{\alpha_n}{\dot{\tau}_n} q(0)
		+ \bO(n^{-1/6}\om_r^2(n))
\]
uniformly on $\cV\times \cJ$. But we already know that
\begin{equation*}
\inner{\psi_0^2(\lambda_n(tq,b),\cdot)}{q}
	= \inner{\psi_0^2(-a_n',\cdot)}{q} + \bO(n^{-1/3}\om_r(n))
\end{equation*}
and the argument leading to this expression carries over for $q'$ so
\begin{equation*}
\inner{\psi_0^2(\lambda_n(tq,b),\cdot)}{q'}
	= \inner{\psi_0^2(-a_n',\cdot)}{q'} + \bO(n^{-1/3}\om_r(n)),
\end{equation*}
uniformly on $\cV\times \cJ$. Finally, from Lemma~\ref{lem:alpha-etcetera} and a comment in the proof of
Lemma~\ref{lem:pieces}, it follows that
\begin{equation*}
\frac{\alpha_n}{\dot{\tau}_n} = (\tfrac32\pi n)^{-2/3}\left[1 + \bO(\om_r^2(n))\right].
\end{equation*}
The proof is now complete.
\end{proof}

In what follows, we use the identity
\begin{equation*}
\dot{w}_n = \dot{\tau}_n + \dot{W}_n,\quad\text{where}\quad
\dot{W}_n \defeq \dot{\varXi}_n' - b \dot{\varXi}_n.
\end{equation*}

\begin{lemma}
\label{lem:ugly-thing}
Assume $(q,b)\in\bsA_r\times\R$. Then,
\begin{equation*}
\frac{\lambda_n\alpha_n\dot{W}_n}{\dot{\tau}_n\dot{w}_n}
	- \frac{\varXi_n}{\psi_n}
	= \bO(\omega_r^2(n)),
\end{equation*}
uniformly on bounded subsets of $\bsA_r\times\R$.
\end{lemma}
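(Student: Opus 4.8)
The plan is to reduce the assertion—by elementary algebra and the Wronskian identity $\dot w_n\psi_n=\norm{\psi(q,\lambda_n,\cdot)}_2^2$ recorded in \eqref{eq:norming-constants}--\eqref{eq:this-again}—to a single estimate that no longer involves derivatives in $z$, namely
\begin{equation*}
\int_0^\infty\psi_0(\lambda_n,x)\,\varXi(q,\lambda_n,x)\,dx
	=\lambda_n\psi_0(\lambda_n,0)\,\varXi(q,\lambda_n,0)+\bO(n^{1/3}\om_r^2(n)),
\end{equation*}
and then to prove this estimate by a Green-type identity. Throughout I write $\alpha_n\defeq\psi_0(\lambda_n,0)$, $\beta_n\defeq\psi_0'(\lambda_n,0)$, $\varXi_n\defeq\varXi(q,\lambda_n,0)$, $\varXi_n'\defeq\varXi'(q,\lambda_n,0)$, so $\psi_n=\alpha_n+\varXi_n$, and I recall $\dot w_n=\dot\tau_n+\dot W_n$, $\dot\tau_n=\lambda_n\alpha_n+b\beta_n$.

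For the reduction I would first collect the asymptotic inputs: $|\lambda_n|\asymp n^{2/3}$, $|\alpha_n|\asymp n^{-1/6}$ and $\lambda_n\alpha_n^2=(\tfrac32\pi n)^{1/3}(1+o(1))$ (from \eqref{eq:zeros-of-ai-prime}, Lemma~\ref{lem:crude-asymp-eigenvalues} and the Airy expansions in the proof of Lemma~\ref{lem:denominator}); $|\varXi_n|=\bO(n^{-1/6}\om_r(n))$ and $|\varXi_n'|=\bO(n^{1/6}\om_r(n))$ (from \eqref{eq:about-Xi}--\eqref{eq:about-Xi-prime}, using $g_A(-\lambda_n)=1$, and Remark~\ref{rem:emergence-omega_r}); and—a point that needs care—the sharp bound $|\beta_n|=\bO(n^{1/6}\om_r(n))$, obtained by writing $-\lambda_n=a_n'+\bO(n^{-1/3}\om_r(n))$ (now that Theorem~\ref{thm:eigenvalues} is available) and Taylor-expanding $\ai'$ at its zero $a_n'$ with $|\ai''|\le Cn^{1/2}$ there; the crude $\psi_0'(\lambda_n)=\bO(n^{-1/6+\epsilon})$ of Lemma~\ref{lem:denominator} does not suffice, since one needs $\beta_n^2=\bO(n^{1/3}\om_r^2(n))$. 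Substituting $\dot W_n=\dot w_n-\dot\tau_n$, $\varXi_n=\psi_n-\alpha_n$, $\dot w_n=\norm{\psi(q,\lambda_n,\cdot)}_2^2/\psi_n$ and $\norm{\psi(q,\lambda_n,\cdot)}_2^2=\lambda_n\alpha_n^2+\beta_n^2+2\int_0^\infty\psi_0\varXi+\int_0^\infty\varXi^2$ (the first three terms from \eqref{eq:this-again} with $q\equiv0$), and setting $D_n\defeq\lambda_n\alpha_n^2$, $\mu_n\defeq\varXi_n/\alpha_n$, $\varepsilon_n\defeq\bigl(\beta_n^2+2\int_0^\infty\psi_0\varXi+\int_0^\infty\varXi^2\bigr)/D_n$, a short computation gives
\begin{equation*}
\frac{\lambda_n\alpha_n\dot W_n}{\dot\tau_n\dot w_n}-\frac{\varXi_n}{\psi_n}
	=\Bigl(\frac{\lambda_n\alpha_n}{\dot\tau_n}-1\Bigr)
		+\frac{\varepsilon_n-2\mu_n-\mu_n^2}{(1+\varepsilon_n)(1+\mu_n)}.
\end{equation*}
The first summand equals $-b\beta_n/\dot\tau_n=\bO(\om_r^2(n))$; since $\mu_n,\varepsilon_n=\bO(\om_r(n))$ and $\beta_n^2,\int_0^\infty\varXi^2=\bO(D_n\om_r^2(n))$, the second summand is $\bO(\om_r^2(n))$ exactly when $\varepsilon_n-2\mu_n=\bO(\om_r^2(n))$, and this is equivalent to the displayed integral estimate.

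To prove that estimate I would use that $\varXi(q,\lambda_n,\cdot)$ solves $-\varXi''+(x-\lambda_n)\varXi=-q\psi$ while $\psi_0'(\lambda_n,\cdot)$ solves $-(\psi_0')''+(x-\lambda_n)\psi_0'=-\psi_0$. Pairing the first equation with $\psi_0'(\lambda_n,\cdot)$, the second with $\varXi(q,\lambda_n,\cdot)$, subtracting, and integrating over $\R_+$ (the boundary term at $\infty$ vanishes by the $g_A$-decay of $\psi_0,\psi_0',\psi_0'',\varXi,\varXi'$; at $x=0$ one uses $\psi_0''(\lambda_n,0)=-\lambda_n\alpha_n$) yields
\begin{equation*}
\int_0^\infty\psi_0(\lambda_n,x)\varXi(q,\lambda_n,x)\,dx
	=\lambda_n\alpha_n\varXi_n+\beta_n\varXi_n'
		+\int_0^\infty\psi_0'(\lambda_n,x)\,q(x)\,\psi(q,\lambda_n,x)\,dx.
\end{equation*}
Here $\beta_n\varXi_n'=\bO(n^{1/3}\om_r^2(n))$ by the bounds above, and in the last integral I split $\psi=\psi_0+\varXi$: the $\varXi$-part is $\le C\om(q,\lambda_n)\norm{q}_1=\bO(\om_r(n))$ (using $|\psi_0'\varXi|\le C\om(q,\lambda_n)g_A^2\le C\om(q,\lambda_n)$), while for the $\psi_0$-part one writes $\psi_0'\psi_0=\tfrac12(\psi_0^2)'$ and integrates by parts, obtaining $-\tfrac12\alpha_n^2q(0)-\tfrac12\int_0^\infty\psi_0(\lambda_n,x)^2q'(x)\,dx=\bO(n^{-1/3})+\bO(\om(q',\lambda_n))=\bO(n^{-1/3})+\bO(\om_r(n))$. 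The closing observation is that $n^{1/3}\om_r(n)\ge1$, whence $\om_r(n)=\bO(n^{1/3}\om_r^2(n))$ and $n^{-1/3}=\bO(n^{1/3}\om_r^2(n))$; so all these remainders are $\bO(n^{1/3}\om_r^2(n))$, finishing the reduction. Uniformity for $(q,b)$ in bounded sets is automatic, since every input ($\om(q,\cdot)$, $\om(q',\cdot)$, $\norm{q}_1$, $q(0)$, the eigenvalue asymptotics) is uniformly controlled there.

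The main obstacle is the last round of bookkeeping, at the $\om_r^2$-scale: the quantities $\mu_n$, $\varepsilon_n$, $\lambda_n\alpha_n/\dot\tau_n-1$ and $\int_0^\infty\psi_0'q\psi$ are individually only $\bO(\om_r(n))$, and the improvement to $\bO(\om_r^2(n))$ materializes solely through the exact cancellations encoded in $\dot w_n\psi_n=\norm{\psi(q,\lambda_n,\cdot)}_2^2$ and in the Green identity for $\int_0^\infty\psi_0\varXi$. One must in particular invoke the sharp $\beta_n^2\asymp D_n\om_r^2(n)$ (which depends on Theorem~\ref{thm:eigenvalues}, not merely on Lemma~\ref{lem:denominator}) and the size coincidence $\om_r(n)\le n^{1/3}\om_r^2(n)$; without either of these the error terms would refuse to close.
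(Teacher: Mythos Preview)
Your argument is correct, and it is genuinely different from the paper's proof. The paper reduces, as you do, to showing that $\dot W_n/(\lambda_n\alpha_n)-\varXi_n/\alpha_n=\bO(\om_r^2(n))$, but then attacks this difference head-on by expanding $\varXi$ and $\dot\varXi'$ via the Neumann-series refinement of Remark~\ref{rem:refinement}: one writes $\varXi_n=\psi_{1,n}+\varXi_n^{(2)}$ and uses the identity $\dot\varXi'(q,z,0)=-\psi_0(z,0)q(0)+z\psi_1(q,z,0)+\dot{\psi'}_1^{\text{res}}(q,z,0)+\dot\varXi'^{(2)}(q,z,0)$, so that the first-order pieces $\psi_{1,n}$ cancel explicitly and the residues are $\bO(\om_r^2(n))$ by Remarks~\ref{rem:As-and-Bs}--\ref{rem:more-of-the-same}. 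Your route instead eliminates the $z$-derivative altogether through the Wronskian identity $\dot w_n\psi_n=\norm{\psi(q,\lambda_n,\cdot)}_2^2$, which turns $\dot W_n$ into an $L^2$-norm, and then recovers the needed cancellation from a Green-type identity for $\int_0^\infty\psi_0\varXi$. This is more self-contained (it does not require setting up $\psi_1$, $\varXi^{(2)}$, $\dot\varXi^{(2)}$, $\dot{\psi'}_1^{\text{res}}$, etc.) and arguably cleaner for this lemma in isolation; the paper's apparatus, on the other hand, is reused throughout Section~\ref{sec:norming-constants} and the Appendix, so there the cost is amortized.

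Two minor remarks. First, your $\beta_n=\psi_0'(\lambda_n,0)$ clashes with the paper's notation, where $\beta_n=\theta_0(\lambda_n,0)$ and your quantity is called $\alpha_n'$; this is harmless but worth flagging. Second, your observation that $|q(0)|\le\norm{q'}_1\le(r-1)^{-1/2}\norm{q'}_{\sA_r}$ is what makes the uniformity on bounded subsets of $\bsA_r\times\R$ go through without tracking $q(0)$ separately; this is implicit in the paper's proof as well.
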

\begin{proof}
In view of Remark~\ref{rem:some-asymptotics},
\begin{equation*}
\dot{\varXi}'_n = \bO(n^{1/2}\omega_r(n))
\quad\text{and}\quad
\dot{\varXi}_n = \bO(n^{1/6}\omega_r(n))
\quad\text{so}\quad
\frac{\dot{W}_n}{\dot{\tau}_n} = \bO(\omega_r(n)).
\end{equation*}
Also,
\begin{equation*}
\frac{\alpha_n'}{\lambda_n\alpha_n}
	= \bO(n^{-1/3}\omega_r(n))
\quad\text{and, for later use,}\quad
\frac{\dot{W}_n}{\lambda_n\alpha_n}
	= \bO(\omega_r(n)).
\end{equation*}
As a consequence,
\begin{equation*}
\frac{\lambda_n\alpha_n\dot{W}_n}{\dot{\tau}_n\dot{w}_n}
	= \frac{\dot{W}_n}{\lambda_n\alpha_n}\left[1 + \bO(\omega_r(n))\right].
\end{equation*}
Similarly,
\begin{equation*}
\frac{\varXi_n}{\alpha_n} = \bO(\omega_r(n))
\quad\text{so}\quad
\frac{\varXi_n}{\psi_n} = \frac{\varXi_n}{\alpha_n} \left[1 + \bO(\omega_r(n))\right].
\end{equation*}
Thus, up to this point, we have
\begin{equation*}
\frac{\lambda_n\alpha_n\dot{W}_n}{\dot{\tau}_n\dot{w}_n}
	- \frac{\varXi_n}{\psi_n}
	= \frac{\dot{W}_n}{\lambda_n\alpha_n}
	- \frac{\varXi_n}{\alpha_n} + \bO(\omega_r^2(n)).
\end{equation*}

We now turn our attention to equations \eqref{eq:recall-1} and \eqref{eq:recall-2},
for they imply
\begin{equation*}
\frac{\dot{W}_n}{\lambda_n\alpha_n} - \frac{\varXi_n}{\alpha_n}
	= -\frac{q(0)}{\lambda_n}
	+ \frac{\dot{\psi'}_{1,n}^\text{res}}{\lambda_n\alpha_n}
	- \frac{b\dot{\varXi}_n}{\lambda_n\alpha_n}
	+ \frac{\dot{\varXi'}^{(2)}_n}{\lambda_n\alpha_n}
	- \frac{\varXi^{(2)}_n}{\alpha_n}.
\end{equation*}
Now, taking into account Remark~\ref{rem:As-and-Bs},
\begin{equation*}
\dot{\psi'}_{1,n}^\text{res}
	= \bO(n^{1/6}\omega_r(n)),
\quad\text{hence}\quad
	\frac{\dot{\psi'}_{1,n}^\text{res}}{\lambda_n\alpha_n}
	= \bO(n^{-1/3}\omega_r(n)).
\end{equation*}
Clearly,
\begin{equation*}
\frac{b\dot{\varXi}_n}{\lambda_n\alpha_n}
	= \bO(n^{-1/3}\omega_r(n)),
\quad
\frac{q(0)}{\lambda_n} = \bO(n^{-2/3})
\end{equation*}
and, in view of Remark~\ref{rem:more-of-the-same},
\begin{equation*}
\frac{\dot{\varXi'}^{(2)}_n}{\lambda_n\alpha_n}
	= \bO(\omega^2_r(n)),\quad
\frac{\varXi^{(2)}_n}{\alpha_n}
	= \bO(\omega^2_r(n)).
\end{equation*}
The assertion follows from these estimates.
\end{proof}

\begin{lemma}
\label{lem:the-2-boys}
Assume $(q,b,v)\in\bsA_r\times\R\times\sA_r$. Then,
\begin{equation*}
\inner{\varLambda_1(q,\lambda_n(q,b),\cdot)}{v}_2 = \bO(n^{-1/6}\om_r^2(n))
\quad\text{and}\quad
\inner{\varLambda_2(q,b,\lambda_n(q,b),\cdot)}{v}_2 = \bO(n^{1/2}\om_r^2(n)),
\end{equation*}
uniformly on bounded subsets of $\bsA_r\times\R\times\sA_r$.
\end{lemma}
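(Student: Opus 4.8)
The plan is to estimate the two inner products term by term, using the explicit decompositions of $\varLambda_1$ and $\varLambda_2$ together with the pointwise bounds collected in Section~\ref{sec:prelim} and the evaluation $\om(q,\lambda_n) = \bO(\om_r(n))$, $\bom(q,\lambda_n)=\bO(\om_r(n))$ from Remark~\ref{rem:emergence-omega_r}. First I would record, for $z=\lambda_n$ large and positive, the magnitudes of the ingredients at the relevant scale: $\sigma(\lambda_n)\asymp n^{1/6}$, $g_A(\lambda_n-\lambda_n)=1$ but $g_A(x-\lambda_n)$ decays for $x>\lambda_n$, and $\ch(\lambda_n,x)$ is comparable to $g_B(-\lambda_n)g_A(x-\lambda_n)$ on the bulk region. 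The key point is that each summand in $\varLambda_1$ and $\varLambda_2$ is a product of an explicit unperturbed factor ($\psi_0$, $s_0$, $\phi_0$, $\dot\phi_0$, $\psi_0'$) with one or two perturbative factors ($\varXi$, $\varUpsilon_s$, $\dot\varXi$, $\varPhi$, $\dot\varPhi$), and the perturbative factors each carry a gain of $\om(q,\lambda_n)$ or $\bom(q,\lambda_n)$, i.e.\ a factor $\om_r(n)$.

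For $\varLambda_1$: the three terms are $s_0\varXi$, $\psi_0\varUpsilon_s$, and $\varUpsilon_s\varXi$. Pairing against $v\in\sA_r$ via Cauchy--Schwarz in $L^2((1+x)^{-r}dx)$ (or, equivalently, bounding $\abs{\inner{\cdot}{v}_2}\le\norm{v}_{\sA_r}\,(\int\abs{\cdot}^2(1+x)^{-r})^{1/2}$, exactly as in the proof of Theorem~\ref{thm:eigenvalues}), one uses $\abs{s_0(\lambda_n,x)}\le C\sigma(\lambda_n)^{-1}\sigma(x-\lambda_n)^{-1}\ch(\lambda_n,x)$, $\abs{\varUpsilon_s}\le C\om_r(n)\sigma(\lambda_n)^{-1}\sigma(x-\lambda_n)^{-1}\ch(\lambda_n,x)$, and $\abs{\psi_0(\lambda_n,x)},\abs{\varXi(q,\lambda_n,x)}\le C\,(\text{const or }\om_r(n))\,g_A(x-\lambda_n)\sigma(x-\lambda_n)^{-1}$. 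Each term thus produces an integrand bounded by a constant (or $\om_r(n)$) times $\om_r(n)\cdot\sigma(\lambda_n)^{-2}\cdot(\text{product of }\ch/g_A\text{ and }\sigma\text{ weights})$; since $\sigma(\lambda_n)^{-2}\asymp n^{-1/3}$, the $L^2$-weighted norm of the integrand is $\bO(n^{-1/6}\om_r(n))$ after taking the square root of the $n^{-1/3}$, which yields the claimed $\bO(n^{-1/6}\om_r^2(n))$ once the Cauchy--Schwarz pairing against $v$ supplies nothing worse than a constant. The analogous bookkeeping for $\varLambda_2$ is the same, except that the unperturbed factors now include $\dot\phi_0$ and $\psi_0'$, which carry the larger weight $\sigma(x-\lambda_n)$ rather than $\sigma(x-\lambda_n)^{-1}$, and $\dot\varPhi$, $\dot\varXi$ likewise gain a $\sigma(x-\lambda_n)$; the net effect is to replace $\sigma(\lambda_n)^{-2}$ by $\sigma(\lambda_n)^{2}\asymp n^{1/3}$, so the weighted $L^2$-norm is $\bO(n^{1/2}\om_r(n))$ and the pairing gives $\bO(n^{1/2}\om_r^2(n))$.

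The delicate point is controlling the $\sigma(x-\lambda_n)^{\pm1}$ weights against the measure $(1+x)^{-r}dx$ and the decay of $g_A(x-\lambda_n)$, so that the $x$-integral really is uniformly bounded (and does not secretly contribute an extra power of $n$). For $r>1$ the integral of $(1+x)^{-r}$ converges, and for the terms containing $\sigma(x-\lambda_n)^2$ one exploits that $g_A(x-\lambda_n)=1$ only on $[0,\lambda_n]$ and decays super-polynomially for $x>\lambda_n$, so $\int_0^\infty \sigma(x-\lambda_n)^4 g_A(x-\lambda_n)^2(1+x)^{-r}dx$ splits into the bounded-by-$\lambda_n$ part, where $\sigma(x-\lambda_n)^4\le C(1+\lambda_n)$ is absorbed by $(1+x)^{-r}$ with $r>1$, and the tail, which is negligible; this is exactly the kind of estimate already carried out in the Appendix and in \cite{toluri-2022}, so I would invoke those auxiliary lemmas rather than redo the computation. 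I expect this weight-versus-decay bookkeeping — and keeping the uniformity in $(q,b,v)$ on bounded sets throughout — to be the main (though routine) obstacle; once it is in place the two stated orders $n^{-1/6}$ and $n^{1/2}$ drop out of the powers of $\sigma(\lambda_n)^{\pm1}$ exactly as indicated.
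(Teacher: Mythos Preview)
There is a genuine gap: the Cauchy--Schwarz route you propose loses one factor of $\om_r(n)$ and therefore falls short of the claimed bounds. Take the term $s_0\varXi$ in $\varLambda_1$. It contains exactly one perturbative factor, so the pointwise bound reads
\[
\abs{s_0(\lambda_n,x)\varXi(q,\lambda_n,x)}
	\le C\,\om(q,\lambda_n)\,\frac{\ch(\lambda_n,x)g_A(x-\lambda_n)}{\sigma(\lambda_n)\sigma^2(x-\lambda_n)}
	\le \frac{C\,\om_r(n)}{\sigma(\lambda_n)\sigma^2(x-\lambda_n)},
\]
with only a \emph{single} power of $\sigma(\lambda_n)^{-1}\asymp n^{-1/6}$, not $\sigma(\lambda_n)^{-2}$. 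Applying your Cauchy--Schwarz step then yields
\[
\abs{\inner{s_0\varXi}{v}_2}
	\le \norm{v}_{\sA_r}\left(\int_0^\infty\abs{s_0\varXi}^2(1+x)^{-r}dx\right)^{1/2}
	= \bO(n^{-1/6}\om_r(n)),
\]
because the remaining $x$-integral is merely bounded. The pairing against $v$ contributes ``nothing worse than a constant,'' as you say---and that is precisely the problem: you need it to contribute an additional $\om_r(n)$. The same deficit appears in $\psi_0\varUpsilon_s$ and in the leading terms of $\varLambda_2$.

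The paper obtains the missing factor by \emph{not} using Cauchy--Schwarz. Instead it bounds the pairing directly in $L^1$: for instance, after writing $s_0=-\beta_n\psi_0+\alpha_n\theta_0$,
\[
\abs{\inner{\psi_0\varXi}{v}_2}
	\le C\,\om(q,\lambda_n)\int_0^\infty\frac{\abs{v(x)}}{\sigma^2(x-\lambda_n)}\,dx
	\le C\,\om(q,\lambda_n)\,\om(v,\lambda_n)
	= \bO(\om_r^2(n)),
\]
and then multiplies by $\alpha_n=\bO(n^{-1/6})$. The second $\om_r(n)$ comes from $\om(v,\lambda_n)\le C_r\norm{v}_{\sA_r}\Omega_r(\lambda_n)$ (Remark~\ref{rem:omegas}), which is the estimate Cauchy--Schwarz discards. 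As a secondary point, your claim that $\int_0^\infty\sigma^4(x-\lambda_n)g_A^2(x-\lambda_n)(1+x)^{-r}dx$ is uniformly bounded is also incorrect: on $[0,\lambda_n]$ one has $g_A\equiv 1$ and $\sigma^4(x-\lambda_n)\sim\lambda_n$ near $x=0$, so the integral grows like $\lambda_n$.
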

\begin{proof}
As before, $\inner{\cdot}{\cdot}$ means $\inner{\cdot}{\cdot}_2$.
We have
\begin{equation*}
\inner{s_0\varXi}{v}
	= - \beta_n\inner{\psi_0\varXi}{v} + \alpha_n\inner{\theta_0\varXi}{v},
\end{equation*}
and note that
\begin{equation*}
\abs{\inner{\psi_0\varXi}{v}}
	\le C\om(q,\lambda_n)\om(v,\lambda_n)
\implies
\inner{\psi_0\varXi}{v} = \bO(\omega_r^2(n));
\end{equation*}
an analogous asymptotic expansion holds true for $\inner{\theta_0\varXi}{v}$. Taking into account
Lemma~\ref{lem:alpha-etcetera}, we obtain
\begin{equation}
\label{eq:1}
\inner{s_0\varXi}{v} = \bO(n^{-1/6}\omega_r^2(n)),
\end{equation}
uniformly on bounded subsets of $\bsA_r\times\R\times\sA_r$ (from this point on, we will not refer
to this remark explicitly). Next, we note that
\begin{equation*}
\ch(\lambda,x)g_A(x-\lambda)
	= g_B(-\lambda) g_A^2(x-\lambda) + g_A(-\lambda),
\end{equation*}
where $g_A(\lambda)\le 1$ on the whole real line and $g_B(-\lambda)\le 1$ whenever
$\lambda\ge 0$. As a consequence,
\begin{equation}
\label{eq:2}
\abs{\inner{\psi_0\varUpsilon_s}{v}}
	\le C\frac{\om(q,\lambda_n)\om(v,\lambda_n)}{\sigma(\lambda_n)}
\implies
\inner{\psi_0\varUpsilon_s}{v} = \bO(n^{-1/6}\omega_r^2(n)).
\end{equation}
Similarly,
\begin{equation}
\label{eq:3}
\inner{\varUpsilon_s\varXi}{v} = \bO(n^{-1/6}\omega_r^3(n)).
\end{equation}
Thus, the assertion concerning $\varLambda_1$ follows from \eqref{eq:1}--\eqref{eq:3}.

The proof of the remaining assertion runs through similar arguments. Resorting to
\eqref{eq:about-Xi} and \eqref{eq:estimate-dot-phi-0}, we obtain
\begin{equation*}
\abs{\inner{\dot{\phi}_0\varXi}{v}}
	\le C (1+\abs{b})\om(q,\lambda_n)
		\left(\frac{\abs{\lambda_n}}{\sigma(\lambda_n)}\om(v,\lambda_n) +
		\sigma(\lambda_n)\norm{v}_{\sA_r}\right).
\end{equation*}
That is,
\begin{equation*}
\inner{\dot{\phi}_0\varXi}{v} = \bO(n^{1/2}\omega_r^2(n)).
\end{equation*}
Also,
\begin{equation*}
\inner{\phi_0\dot{\varXi}}{v}
	= (\beta_n' - b \beta_n)\inner{\psi_0\dot{\varXi}}{v}
		- (\alpha_n' - b \alpha_n)\inner{\theta_0\dot{\varXi}}{v},
\end{equation*}
where
\begin{equation*}
\inner{\psi_0\dot{\varXi}}{v} = \bO(\omega_r^2(n))
\quad\text{and}\quad
\inner{\theta_0\dot{\varXi}}{v} = \bO(\omega_r(n))
\implies
\inner{\phi_0\dot{\varXi}}{v} = \bO(n^{1/6}\omega_r(n)).
\end{equation*}
Furthermore,
\begin{equation*}
\inner{\psi_0'\varPhi}{v}
	= \inner{\psi_0'\varUpsilon_c}{v} + b \inner{\psi_0'\varUpsilon_s}{v},
\end{equation*}
where an argument like the one leading to \eqref{eq:2} implies
\begin{equation*}
\inner{\psi_0'\varUpsilon_c}{v} = \bO(n^{1/6}\omega_r(n))
\quad\text{and}\quad
\inner{\psi_0'\varUpsilon_s}{v} = \bO(n^{-1/6}\omega_r(n))
\quad\text{so}\quad
\inner{\psi_0'\varPhi}{v}
	= \bO(n^{1/6}\omega_r(n)).
\end{equation*}
Similarly,
\begin{equation*}
\inner{\psi_0\dot{\varPhi}}{v}
	= \inner{\psi_0\dot{\varUpsilon}_c}{v} + b \inner{\psi_0\dot{\varUpsilon}_s}{v}
\end{equation*}
which, in view of Lemma~\ref{lem:about-c} and Lemma~\ref{lem:about-s}, yieds
\begin{equation*}
\inner{\psi_0\dot{\varPhi}}{v} = \bO(n^{1/2}\omega^2_r(n)).
\end{equation*}
Finally, one can verify that
\begin{equation*}
\inner{\dot{\varPhi}\varXi}{v}
	= \bO(n^{1/6}\omega_r^2(n))
\quad\text{and}\quad
\inner{\varPhi\dot{\varXi}}{v}
	= \bO(n^{1/6}\omega_r^2(n))
\end{equation*}
using analogous arguments.
\end{proof}

The proof of following result makes use of a decomposition of the function $\phi$ and its
partial derivatives that are discussed in Remark~\ref{rem:the-end}.

\begin{lemma}
\label{lem:Lambda_n}
Define $\varLambda_n:\bsA_r\times\R\to\sA_r$ according to the rule
\begin{equation*}
(1+x)^{r}\varLambda_n(q,b,x)
	\defeq \frac{\varLambda_1(q,\lambda_n,x)}{\psi_n}
			+ \frac{\varLambda_2(q,b,\lambda_n,x)}{\dot{w}_n}.
\end{equation*}
Then,
\begin{equation*}
\inner{\varLambda_n(q,b,\cdot)}{q}_{\sA_r}
	= \inner{(1+\cdot)^{r}\varLambda_n(q,b,\cdot)}{q}_2
	= \bO(\om_r^3(n)),
\end{equation*}
uniformly for $(q,q(0),b)$ on bounded subsets of $\bsA_r\times\R\times\R$.
\end{lemma}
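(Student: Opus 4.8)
The plan is to reduce the weighted pairing to two ordinary $L^2$ pairings and then to exhibit a cancellation that removes the a priori $\bO(\om_r^2(n))$ contributions. Since $q$ is real and $(1+x)^{r}\varLambda_n(q,b,x) = \varLambda_1(q,\lambda_n,x)/\psi_n + \varLambda_2(q,b,\lambda_n,x)/\dot{w}_n$, one has immediately
\[
\inner{\varLambda_n(q,b,\cdot)}{q}_{\sA_r}
	= \frac{\inner{\varLambda_1(q,\lambda_n,\cdot)}{q}_2}{\psi_n}
		+ \frac{\inner{\varLambda_2(q,b,\lambda_n,\cdot)}{q}_2}{\dot{w}_n}.
\]
From Lemma~\ref{lem:crude-asymp-eigenvalues}, Lemma~\ref{lem:denominator} and the estimates in its proof, $\psi_n = \psi_0(\lambda_n,0)[1+\bO(\om_r(n))]$ with $\abs{\psi_0(\lambda_n,0)}\asymp n^{-1/6}$, and, because $\norm{\psi(q,\lambda_n,\cdot)}^2_2 = \dot{w}_n\psi_n$, also $\dot{w}_n = \lambda_n\psi_0(\lambda_n,0)[1+\bO(\om_r(n))]$ with $\abs{\dot{w}_n}\asymp n^{1/2}$; in particular $1/\psi_n = \bO(n^{1/6})$, $1/\dot{w}_n = \bO(n^{-1/2})$, and
\[
\frac{1}{\psi_n} - \frac{\lambda_n}{\dot{w}_n} = \bO\bigl(n^{1/6}\om_r(n)\bigr).
\]
Feeding $v=q$ into Lemma~\ref{lem:the-2-boys} and dividing by $\psi_n$, resp.\ $\dot{w}_n$, would only give $\bO(\om_r^2(n))$, so the extra factor $\om_r(n)$ has to come from this last relation.

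Next I would substitute the refined decompositions of $\phi$, $\dot{\phi}$, $\psi$, $\psi'$, $\dot{\psi}$ and $s$ recorded in Remark~\ref{rem:the-end} (which sharpen Lemma~\ref{lem:about-c} and Lemma~\ref{lem:about-s}), writing each perturbation remainder — $\varXi$, $\varUpsilon_s$, $\varUpsilon_c$, $\dot{\varXi}$, $\varPhi$, $\dot{\varPhi}$ — as a principal part linear in $q$ (expressed through $\psi_0$, $\theta_0$ and $q$, $q'$, $q(0)$) plus a genuinely higher-order term of size $\bO(\bom(q,\lambda_n)^2)=\bO(\om_r^2(n))$. In the expansions of $\varLambda_1$ and $\varLambda_2$, every product carrying at least two remainder factors, or one second-order remainder, pairs against $q$ into a quantity of size at most $\bO(n^{-1/6}\om_r^3(n))$ or $\bO(n^{1/2}\om_r^3(n))$; after dividing by $\psi_n$ or $\dot{w}_n$, and using that $n^{-1/3}\om_r^2(n)=\bO(\om_r^3(n))$ for both ranges of $r$, each such term is $\bO(\om_r^3(n))$. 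The componentwise bounds already laid out in the proof of Lemma~\ref{lem:the-2-boys}, together with \eqref{eq:very-basic-estimates-2}, Lemma~\ref{lem:about-c}\,\ref{it:c-dot} and Lemma~\ref{lem:about-s}\,\ref{it:s-dot}, supply all the estimates needed here.

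What remains are the bilinear-in-$q$ forms coming from products with exactly one linear-in-$q$ factor: from $\varLambda_1/\psi_n$ those built on $s_0\varXi$ and $\psi_0\varUpsilon_s$, and from $\varLambda_2/\dot{w}_n$ those inside $\dot{\phi}_0\varXi$, $\phi_0\dot{\varXi}$, $\psi_0\dot{\varPhi}$ and $-\psi_0'\varPhi$ once one expands $\phi_0 = c_0+bs_0$, $\dot{\phi}_0 = -\lambda_n s_0 - c_0' + bc_0 - bs_0'$, $\varPhi = \varUpsilon_c + b\varUpsilon_s$ and $\dot{\varPhi} = \dot{\varUpsilon}_c + b\dot{\varUpsilon}_s$ (the $q(0)$-dependence entering through the $(q(0)-\lambda_n)s_0$ term of Lemma~\ref{lem:about-c}\,\ref{it:c-dot}). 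The organizing identities are $\phi(q,b,\lambda_n,\cdot) = \psi(q,\lambda_n,\cdot)/\psi_n$ and, differentiating in $z$, $\dot{\psi}(q,\lambda_n,\cdot) = \dot{\psi}_n\phi(q,b,\lambda_n,\cdot) + \psi_n\dot{\phi}(q,b,\lambda_n,\cdot) + \dot{w}_n s(q,\lambda_n,\cdot)$: after rewriting $\varLambda_2$ with them, the leading bilinear forms of $\varLambda_2/\dot{w}_n$ align with those of $\varLambda_1/\psi_n$. For instance the $s_0\varXi$ contributions combine with coefficient $1/\psi_n - \lambda_n/\dot{w}_n = \bO(n^{1/6}\om_r(n))$, and since $\inner{s_0\varXi}{q}_2 = \bO(n^{-1/6}\om_r^2(n))$ their sum is $\bO(\om_r^3(n))$; the $\varUpsilon_s$-, $\varUpsilon_c$- and $q(0)$-dependent forms are treated identically, their prefactors being $\bO(\om_r(n))$ times $1/\psi_0(\lambda_n,0)$ or better once the telescoping is carried out. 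Summing all contributions gives the claimed $\bO(\om_r^3(n))$, uniform on bounded subsets since every implicit constant depends only on $\norm{q}_{\bsA_r}$, $\abs{b}$ and $\abs{q(0)}$.

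The main obstacle is precisely this last matching step: one must determine which linear-in-$q$ principal parts coming from $\varLambda_1/\psi_n$ and from $\varLambda_2/\dot{w}_n$ pair up, and check term by term that their $\bO(\om_r^2(n))$ parts truly cancel, not merely that they are bounded. I would keep this under control by using the two structural identities above to collapse $\varLambda_1/\psi_n + \varLambda_2/\dot{w}_n$ to a shorter expression before estimating anything, so that the cancellations are built in from the outset rather than verified one product at a time.
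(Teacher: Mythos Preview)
Your reduction to $\inner{\varLambda_1}{q}_2/\psi_n + \inner{\varLambda_2}{q}_2/\dot w_n$ and the observation $1/\psi_n - \lambda_n/\dot w_n = \bO(n^{1/6}\om_r(n))$ are correct, and your example with $s_0\varXi$ works exactly as you say: that term appears in $\varLambda_1$ and (via $\dot\phi_0 = -\lambda_n s_0 - c_0' + bc_0 - bs_0'$) as $-\lambda_n s_0\varXi$ inside $\varLambda_2$, so the prefactor is $1/\psi_n - \lambda_n/\dot w_n$ and the product is $\bO(\om_r^3(n))$. This is essentially the paper's first move, which replaces $1/\psi_n,1/\dot w_n$ by $1/\alpha_n,1/(\lambda_n\alpha_n)$ and then forms $\lambda_n\varLambda_1 + \varLambda_2$.

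The gap is your claim that \emph{all} the remaining linear-in-$q$ terms are ``treated identically'' by prefactor matching. They are not. After the $s_0\varXi$ cancellation one is left (see the paper's expression for $\lambda_n\varLambda_1+\varLambda_2$) with blocks such as $\phi_0\dot\varXi - \phi_0'\varXi$ and $\lambda_n\psi_0\varUpsilon_s + \psi_0\dot\varPhi - \psi_0'\varPhi$; none of these has a partner in the other $\varLambda_j$ producing a coefficient $1/\psi_n - \lambda_n/\dot w_n$. For instance $\inner{\phi_0\dot\varXi}{q}_2 = \bO(n^{1/6}\om_r(n))$, and dividing by $\dot w_n$ only gives $\bO(n^{-1/3}\om_r(n))$, which is not $\bO(\om_r^3(n))$. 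The extra factor of $\om_r(n)$ comes from a different mechanism: using the refined decompositions $\varXi=\psi_1+\varXi^{(2)}$ and $\dot\varXi=-\psi_1'+\dot\psi_1^{\mathrm{res}}+\dot\varXi^{(2)}$ one rewrites $\phi_0\dot\varXi-\phi_0'\varXi = -(\phi_0\psi_1)' + (\text{terms with }q',\varXi^{(2)},\dot\varXi^{(2)})$ and then integrates $\inner{(\phi_0\psi_1)'}{q}_2$ by parts, trading it for a $q(0)$ boundary term and a pairing against $q'$. The block $\lambda_n\psi_0\varUpsilon_s+\psi_0\dot\varPhi-\psi_0'\varPhi$ is handled the same way via $-(\psi_0\phi_1)'$. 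Your ``organizing identities'' $\phi=\psi/\psi_n$ and $\dot\psi=\dot\psi_n\phi+\psi_n\dot\phi+\dot w_n s$ do not produce these total-derivative combinations; applied to $\varLambda_1/\psi_n+\varLambda_2/\dot w_n$ they simply reconstruct $(1+x)^rA_n$ minus its zeroth-order part, which is circular given how $\varLambda_n$ is defined in \eqref{eq:big-decomposition}. So the missing idea is the integration-by-parts step, not a further prefactor telescoping.
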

\begin{proof}
In what follows $\inner{\cdot}{\cdot}$ means $\inner{\cdot}{\cdot}_{2}$. A computation yields
\begin{align}
\frac{\varLambda_1}{\psi_n} + \frac{\varLambda_2}{\dot{w}_n}
	&= \frac{\varLambda_1}{\alpha_n} + \frac{\varLambda_2}{\dot{\tau}_n}
		- \frac{\varXi_n}{\alpha_n\psi_n}\varLambda_1
		- \frac{\dot{W}_n}{\dot{\tau}_n\dot{w}_n}\varLambda_2\nonumber
	\\
	&= \frac{\varLambda_1}{\alpha_n} + \frac{\varLambda_2}{\lambda_n\alpha_n}
		- \frac{\varXi_n}{\alpha_n\psi_n}\varLambda_1
		- \frac{\dot{W}_n}{\dot{\tau}_n\dot{w}_n}\varLambda_2
		- \frac{b\alpha'_n}{\lambda_n\alpha_n\dot{\tau}_n}\varLambda_2.\label{eq:full-expansion}
\end{align}
Since
\begin{equation*}
\frac{\varXi_n}{\alpha_n\psi_n}
	= \bO(n^{1/6}\om_r(n)),
\quad
\frac{\dot{W}_n}{\dot{\tau}_n\dot{w}_n}
	= \bO(n^{-1/2}\om_r(n))
\quad\text{and}\quad
\frac{b\alpha'_n}{\lambda_n\alpha_n\dot{\tau}_n}
	= \bO(n^{-5/6}\om_r(n)),
\end{equation*}
Lemma~\ref{lem:the-2-boys} implies
\begin{equation*}
\frac{\varXi_n}{\lambda_n\alpha_n}\inner{\varLambda_1}{q}
	= \bO(\om_r^3(n)),
\quad
\frac{\dot{W}_n}{\dot{\tau}_n\dot{w}_n}\inner{\varLambda_2}{q}
	= \bO(\om_r^3(n)),
\end{equation*}
and
\begin{equation*}
\frac{b\alpha'_n}{\lambda_n\alpha_n\dot{\tau}_n}\inner{\varLambda_2}{q}
	= \bO(n^{-1/3}\om_r^3(n)),
\end{equation*}
uniformly for $(q,b)$ on bounded subsets of $\bsA_r\times\R$.

In dealing with the two remaining terms in \eqref{eq:full-expansion}, we use
the identities \eqref{eq:s-dot-c-dot} on $\dot{\phi}_0 = \dot{c}_0 + b\dot{s}_0$. Thus, we obtain
\begin{equation}
\label{eq:pain-in-the-ass}
\lambda_n\varLambda_1 + \varLambda_2
	= b c_0\varXi + \phi_0\dot{\varXi} - \phi_0'\varXi
		+ \lambda_n\psi_0\varUpsilon_s + \psi_0\dot{\varPhi} - \psi_0'\varPhi
		+ \lambda_n\varUpsilon_s\varXi + \dot{\varPhi}\varXi + \varPhi\dot{\varXi}.
\end{equation}
Then,
\begin{equation*}
\frac{1}{\lambda_n\alpha_n}\inner{c_0\varXi}{q}
	= \frac{\beta_n'}{\lambda_n\alpha_n}\inner{\psi_0\varXi}{q}
		- \frac{\alpha_n'}{\lambda_n\alpha_n}\inner{\theta_0\varXi}{q}
	= \bO(n^{-1/3}\om_r^2(n)).
\end{equation*}
Furthermore,
\begin{equation}
\label{eq:two-more}
\inner{\phi_0\dot{\varXi} - \phi_0'\varXi}{q}
	= - \inner{(\phi_0\psi_1)'}{q} + \inner{\phi_0\dot{\psi}_1^\text{res}}{q}
		+ \inner{\phi_0\dot{\varXi}^{(2)}}{q} - \inner{\phi'_0\varXi^{(2)}}{q}.
\end{equation}
An integration by parts yields
\begin{align*}
- \inner{(\phi_0\psi_1)'}{q}
	&= \phi_0(\lambda_n,0)\psi_1(\lambda_n,0)q(0) + \inner{\phi_0\psi_1}{q'}
	\\[1mm]
	&= \psi_1(\lambda_n,0)q(0)
		+ (\beta_n' - b\beta_n)\inner{\psi_0\psi_1}{q'}
		+ (\alpha_n' - b\alpha_n)\inner{\theta_0\psi_1}{q'}.
\end{align*}
But
\begin{equation*}
\abs{\inner{\psi_0\psi_1}{q'}}
	\le C \om(q,\lambda_n)\om(q',\lambda_n)
\quad\text{and}\quad
\abs{\inner{\theta_0\psi_1}{q'}}
	\le C \om(q,\lambda_n)\om(q',\lambda_n)
\end{equation*}
so, as a consequence of Lemma~\ref{lem:pieces} and the fact that $\psi_{1,n} = \bO(n^{-1/6}\om_r(n))$,
we obtain
\begin{equation*}
\frac{1}{\lambda_n\alpha_n}\inner{(\phi_0\psi_1)'}{q}
	= \bO(n^{-1/3}\om_r^2(n)).
\end{equation*}
Analogous results holds for the remaining terms in \eqref{eq:two-more}. Therefore,
\begin{equation*}
\frac{1}{\lambda_n\alpha_n}\inner{\phi_0\dot{\varXi} - \phi_0'\varXi}{q}
	= \bO(n^{-1/3}\om_r^2(n)),
\end{equation*}
uniformly for $(q,q(0),b)$ on bounded subsets of $\bsA_r\times\R\times\R$.
Let us continue with the next three terms in \eqref{eq:pain-in-the-ass}. Some cancellation
and regrouping leads to
\begin{equation}
\label{eq:intermediate-step}
\lambda_n\psi_0\varUpsilon_s + \psi_0\dot{\varPhi} - \psi_0'\varPhi
	= - (\psi_0\phi_1)' + b\psi_0 c_1 + \psi_0\dot{\phi}_1^\text{res}
		+ \lambda_n\psi_0\varUpsilon_s^{(2)} + \psi_0\dot{\varPhi}^{(2)}
		+ \psi_0'\varPhi^{(2)}.
\end{equation}
An integration by parts yields
\begin{equation*}
	\inner{(\psi_0\phi_1)'}{q} = - \inner{\psi_0\phi_1}{q'}
\end{equation*}
(see Remark~\ref{rem:the-end}). Thus, recalling \eqref{eq:vbe-psi0-prime} and \eqref{eq:phi-1},
we obtain
\begin{equation*}
\abs{\inner{\psi_0\phi_1}{q'}} \le
	C(1+\abs{b}) \sigma(\lambda_n) \om(q,\lambda_n)\om(q',\lambda_n)
	\implies
\frac{1}{\lambda_n \alpha_n}\inner{(\psi_0\phi_1)'}{q}
	= \bO(n^{-1/3}\om_r^2(n))
\end{equation*}
(from here up to the end of the paragraph all the asymptotics are uniform for $(q,b)$ on bounded subsets of
$\bsA_r\times\R$).
An analogous procedure applies for the next two terms in \eqref{eq:intermediate-step}.
As for the fourth term in \eqref{eq:intermediate-step},
\begin{equation*}
\abs{\inner{\psi_0\varUpsilon^{(2)}_s}{q}}
	\le C \frac{\om^3(q,\lambda_n)}{\sigma(\lambda_n)}
	\implies
	\frac{1}{\alpha_n}\inner{\psi_0\varUpsilon^{(2)}_s}{q}
	= \bO(\om_r^3(n)),
\end{equation*}
this due to the inequality
\begin{equation*}
\abs{\varUpsilon^{(2)}_s(q,\lambda,x)}
	\le C \om^2(q,\lambda)
			\frac{\ch(\lambda,x)}{\sigma(\lambda)\sigma(x-\lambda)},\quad \lambda\ge 0.
\end{equation*}
Next, due to \eqref{eq:Phi-dot-2},
\begin{equation*}
\frac{1}{\lambda_n\alpha_n}\inner{\psi_0\dot{\varPhi}^{(2)}}{q} = \bO(\om_r^3(n));
\end{equation*}
a similar bound holds for the last term in \eqref{eq:intermediate-step}.

Finally, for the last three terms in \eqref{eq:pain-in-the-ass} we have
\begin{equation*}
\frac{1}{\alpha_n}\inner{\varUpsilon_s \varXi}{q}
	= \bO(\om_r^3(n)),
\quad
\frac{1}{\lambda_n \alpha_n}\inner{\dot{\varPhi}\varXi}{q}
	= \bO(\om_r^3(n)),
\quad
\frac{1}{\lambda_n \alpha_n}\inner{\varPhi\dot{\varXi}}{q}
	= \bO(n^{-1/3} \om_r^2(n)),
\end{equation*}
uniformly for $(q,q(0),b)$ on bounded subsets of $\bsA_r\times\R\times\R$.
\end{proof}

\begin{lemma}
\label{lem:asymptotics-for-B}
Assume $(q,b)\in\bsA_r\times\R$. Then,
\begin{equation*}
\frac{\dot{\psi}_n}{\psi_n} - \frac{\ddot{w}_n}{\dot{w}_n}
	= - b + \bO(n^{1/3}\om^2_r(n)),
\end{equation*}
uniformly on bounded subsets of $\bsA_r\times\R$.
\end{lemma}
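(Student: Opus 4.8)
The plan is to rewrite $B_n(q,b)=\dot{\psi}_n/\psi_n-\ddot{w}_n/\dot{w}_n$ so that only $\psi,\psi',\dot{\psi}$ enter, and then to exploit the cancellation behind the relation $\dot{\psi}_0=-\psi_0'$. Set $G(q,z,x)\defeq\dot{\varXi}(q,z,x)+\varXi'(q,z,x)$. Since $\dot{\psi}_0=-\psi_0'$ (see \eqref{eq:more-bs}), the decomposition $\psi=\psi_0+\varXi$ yields the pointwise identity $\partial_z\psi(q,z,x)=-\partial_x\psi(q,z,x)+G(q,z,x)$; taking $x=0$, $z=\lambda_n$ and using $\psi'(q,\lambda_n,0)=w_n+b\psi_n=b\psi_n$ gives
\[
\frac{\dot{\psi}_n}{\psi_n}=-b+\frac{G(q,\lambda_n,0)}{\psi_n}.
\]
For the other quotient, put $N(q,z)\defeq\int_0^\infty\psi(q,z,x)^2\,dx$, so $N(q,\lambda_n)=\norm{\psi(q,\lambda_n,\cdot)}_2^2$. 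Integrating \eqref{eq:the-identity} over $\R_+$ (all relevant functions decay at $+\infty$) and substituting $\psi'(q,z,0)=w(q,b,z)+b\psi(q,z,0)$ produces, for every $z$, the identity $N(q,z)=\psi(q,z,0)\dot{w}(q,b,z)-w(q,b,z)\dot{\psi}(q,z,0)$. Differentiating it in $z$ and evaluating at $z=\lambda_n$, where $w=0$, gives $\psi_n\ddot{w}_n=\dot{N}(q,\lambda_n)$ and $\psi_n\dot{w}_n=N(q,\lambda_n)$, hence $\ddot{w}_n/\dot{w}_n=\dot{N}(q,\lambda_n)/N(q,\lambda_n)$. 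Finally $\dot{N}(q,\lambda_n)=2\int_0^\infty\psi(q,\lambda_n,x)\dot{\psi}(q,\lambda_n,x)\,dx$, and replacing $\dot{\psi}(q,\lambda_n,\cdot)$ by $-\psi'(q,\lambda_n,\cdot)+G(q,\lambda_n,\cdot)$ and integrating by parts,
\[
\dot{N}(q,\lambda_n)=\psi_n^2+2\inner{\psi(q,\lambda_n,\cdot)}{G(q,\lambda_n,\cdot)}_2 .
\]
Putting the pieces together,
\[
B_n(q,b)+b=\frac{G(q,\lambda_n,0)}{\psi_n}-\frac{\psi_n^2}{N(q,\lambda_n)}-\frac{2\inner{\psi(q,\lambda_n,\cdot)}{G(q,\lambda_n,\cdot)}_2}{N(q,\lambda_n)} .
\]

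By Lemma~\ref{lem:denominator} and the asymptotics of $\ai$ used in its proof, $\abs{\psi_n}\asymp n^{-1/6}$ and $N(q,\lambda_n)=(\tfrac32\pi n)^{1/3}[1+\bO(\omega_r(n))]$, so $\psi_n^2/N(q,\lambda_n)=\bO(n^{-2/3})$ is harmless; the two $G$-terms carry the weight. If one estimates $\varXi'$ and $\dot{\varXi}$ separately via \eqref{eq:about-Xi-prime} and \eqref{eq:about-Xi-dot}, one only gets $\abs{G(q,\lambda_n,x)}\le C\bom(q,\lambda_n)\sigma(x-\lambda_n)g_A(x-\lambda_n)$, which is a factor $\sigma(\lambda_n)^2\asymp n^{1/3}$ too large and would leave an error $\bO(n^{1/3}\omega_r(n))$. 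The key observation is that the $\sigma g_A$-sized leading parts of $\dot{\varXi}$ and $\varXi'$ cancel, leaving a quantity sourced by $q'$: applying $\partial_x+\partial_z$ (which annihilates $\psi_0$ and $\theta_0$) to the Volterra equation that defines $\varXi$, and integrating by parts in the inner variable, turns the source $q\psi$ into $q'\psi+qG$, so that $G$ itself satisfies a Volterra equation with inhomogeneous term $-\int_x^\infty\bigl[\psi_0(z,x)\theta_0(z,y)-\theta_0(z,x)\psi_0(z,y)\bigr]q'(y)\psi(q,z,y)\,dy$ and a contraction kernel of norm $\bO(\om(q,z))$. The usual Volterra estimate then gives, at $z=\lambda_n$,
\[
\abs{G(q,\lambda_n,x)}\le C\,\bom(q,\lambda_n)\,e^{C\bom(q,\lambda_n)}\,\frac{g_A(x-\lambda_n)}{\sigma(x-\lambda_n)} ,
\]
a gain of $\sigma^2$ over the naive bound. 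The same information is encoded in the refined decompositions of Remark~\ref{rem:the-end} ($\varXi$ as a first Born term plus a quadratic remainder), and it is the analogue here of the $q(0)$ mechanism in Lemma~\ref{lem:ugly-thing}; this is the only place where $q\in\bsA_r$ rather than merely $q\in\sA_r$ is used.

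The remainder is bookkeeping. By Remark~\ref{rem:emergence-omega_r}, $\bom(q,\lambda_n)=\bO(\omega_r(n))$, and $\sigma(\lambda_n)\asymp n^{1/6}$, so the improved bound at $x=0$ gives $G(q,\lambda_n,0)=\bO(n^{-1/6}\omega_r(n))$ and hence $G(q,\lambda_n,0)/\psi_n=\bO(\omega_r(n))$. For the inner product,
\[
\bigl|\inner{\psi(q,\lambda_n,\cdot)}{G(q,\lambda_n,\cdot)}_2\bigr|
\le C\,\bom(q,\lambda_n)\int_0^\infty\frac{g_A(x-\lambda_n)^2}{\sigma(x-\lambda_n)^2}\,dx
=\bO(\omega_r(n))\cdot\bO(\lambda_n^{1/2})=\bO(n^{1/3}\omega_r(n)),
\]
where $\int_0^\infty g_A(x-\lambda_n)^2\sigma(x-\lambda_n)^{-2}\,dx=\bO(\lambda_n^{1/2})$ because $g_A\le1$ on $\R$ and $\sigma(w)^{-2}\sim\abs{w}^{-1/2}$; dividing by $N(q,\lambda_n)\asymp n^{1/3}$, this term is $\bO(\omega_r(n))$ as well. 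Therefore $B_n(q,b)+b=\bO(\omega_r(n))+\bO(n^{-2/3})=\bO(\omega_r(n))$, and since $\omega_r(n)\le n^{1/3}\omega_r^2(n)$ for all large $n$ (with equality when $r\ge2$), the claimed bound follows. Uniformity on bounded subsets of $\bsA_r\times\R$ is inherited from the uniform statements of Lemmas~\ref{lem:crude-asymp-eigenvalues} and~\ref{lem:denominator} and Remark~\ref{rem:emergence-omega_r}, the exponential factors being uniformly bounded because $\bom(q,\lambda_n)\to0$.

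The step I expect to be the main obstacle is the improved bound on $G=\dot{\varXi}+\varXi'$: one has to recognize and exploit the cancellation of the $\sigma g_A$-sized parts of $\dot{\varXi}$ and $\varXi'$ and perform the integration by parts that trades the perturbing source $q$ for $q'$. Without this gain of $\sigma^2$ the error stalls at $\bO(n^{1/3}\omega_r(n))$; with it in place, all the subsequent estimates are routine order-counting in $n$.
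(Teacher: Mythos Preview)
Your argument is correct and takes a genuinely different route from the paper. The paper expands the numerator $\dot{\psi}_n\dot{w}_n-\psi_n\ddot{w}_n$ directly into four groups of terms and then invokes the full appendix machinery (the decompositions $\varXi=\psi_1+\varXi^{(2)}$, $\dot{\varXi}=-\psi_1'+\dot{\psi}_1^{\text{res}}+\dot{\varXi}^{(2)}$, the quantities $P_n,Q_n,P_n',Q_n'$, and the Cauchy-integral bounds of Lemma~\ref{lem:z-derivatives}); the main work is tracking an explicit cancellation of $\lambda_nP_n$ between the second and third groups. You instead isolate the single object $G=\dot{\varXi}+\varXi'$ and derive a Volterra equation for it whose inhomogeneity is sourced by $q'$ rather than $q$, which directly produces the sharp bound $\abs{G}\le C\,\bom\,g_A/\sigma$; combined with the identities $\dot{\psi}_n/\psi_n=-b+G_n/\psi_n$ and $\ddot{w}_n/\dot{w}_n=\dot{N}/N$, the estimate reduces to three transparent terms. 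Your approach is shorter, avoids essentially all of the appendix beyond Remark~\ref{rem:emergence-omega_r} and Lemma~\ref{lem:denominator}, and in fact yields the slightly stronger bound $B_n+b=\bO(\omega_r(n))$, which improves on the paper's $\bO(n^{1/3}\omega_r^2(n))$ by a factor $\log^{1/2}n$ when $r\in(1,2)$.

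Two minor remarks. First, the pointer to Remark~\ref{rem:the-end} is a slip; the relevant refined decompositions of $\varXi$ are in Remark~\ref{rem:refinement}. Second, to make the Volterra argument airtight you should note that the equation $G=F-\int_x^\infty J_0\,q\,G$ has a unique solution among functions bounded by a multiple of $\sigma(x-z)g_A(x-z)$ (which $G$ is, by the crude separate bounds on $\dot{\varXi}$ and $\varXi'$), so that the iterates indeed sum to $G$; and that the boundary term in the integration by parts vanishes because $J_0(z,x,\cdot)\psi(q,z,\cdot)$ is bounded while $q(y)\to 0$, the latter following from $q,q'\in L^2(\R_+)$.
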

\begin{proof}
In terms of the notation already introduced, we can write
\begin{align*}
\dot{\psi}_n \dot{w}_n - \psi_n\ddot{w}_n
	=&	\left(\lambda_n\alpha_n\tau_n - \alpha_n'\dot{\tau}_n\right)
	\\
	 &	+ \left(\lambda_n\tau_n\varXi_n + \dot{\tau}_n\dot{\varXi}_n\right)
	 	- \left(\alpha_n'\dot{W}_n  + \alpha_n\ddot{W}_n\right)
		+ \left(\dot{\varXi}_n\dot{W}_n - \varXi_n\ddot{W}_n\right).
\end{align*}
Let us work out each of the terms above. Thus,
\begin{align*}
\text{1st term}
	&= - b \left(\lambda_n\alpha_n^2 + {\alpha_n'}^2\right)
	\\
	&= - b\, \lambda_n\alpha_n^2 \left(1 + \frac{{\alpha_n'}^2}{\lambda_n\alpha_n^2}\right)
	 = \lambda_n\alpha^2_n\left[-b + \bO(\om_r^2(n))\right].
\end{align*}
Next,
\begin{equation*}
	\text{2nd term}
	= \lambda_n\left(\alpha_n'\varXi_n + \alpha_n\dot{\varXi}_n\right)
	- b\left(\lambda_n\alpha_n\varXi_n - \alpha_n'\dot{\varXi}_n\right).
\end{equation*}
Recalling the decomposition discussed in Remark~\ref{rem:refinement}, in conjunction
with Remark~\ref{rem:As-and-Bs}, we obtain
\begin{equation*}
	\alpha_n'\varXi_n + \alpha_n\dot{\varXi}_n
	= (\alpha_n\beta_n' - \alpha_n'\beta_n) P_n
	+ \alpha_n \dot{\psi}_{1,n}^\text{res} + \alpha'_n \varXi_n^{(2)} + \alpha_n \dot{\varXi}_n^{(2)}.
\end{equation*}
Since $\alpha_n\beta_n' - \alpha_n'\beta_n = W(\psi_0(\lambda_n),\theta_0(\lambda_n)) = 1$
and $\dot{\psi}_{1,n}^\text{res} = \bO(n^{-1/6}\omega_r(n))$, and taking into account
Remark~\ref{rem:more-of-the-same} and Lemma~\ref{lem:alpha-etcetera}, it follows that
\begin{equation*}
\alpha_n'\varXi_n + \alpha_n\dot{\varXi}_n
	= P_n + \bO(\om_r^2(n))
\end{equation*}
On the other hand,
\begin{equation*}
\varXi_n - \frac{\alpha'_n}{\lambda_n\alpha_n}\dot{\varXi}_n = \bO(n^{-1/6}\om_r(n)).
\end{equation*}
Therefore, allowing some degree of informality,
\begin{equation*}
\text{2nd term}
	= \lambda_n P_n + \lambda_n\alpha_n^2 \bO(n^{1/3}\om_r^2(n)).
\end{equation*}
Now,
\begin{equation*}
\text{3rd term}
	= \left(\alpha_n'\dot{\varXi'}_n + \alpha_n\ddot{\varXi'}_n\right)
	- b \left(\alpha_n'\dot{\varXi}_n + \alpha_n\ddot{\varXi}_n\right).
\end{equation*}
Since
\begin{align*}
\alpha_n'\dot{\varXi'}_n + \alpha_n\ddot{\varXi'}_n
	=&\ \lambda_n P_n -\alpha_n\beta_n P_n + \alpha_n^2 Q_n
	  - (\alpha_n'\beta_n + \lambda_n\alpha_n\beta_n)P_n'
	\\
	 & + (\alpha_n\alpha_n' + \lambda_n\alpha_n^2)Q_n'
       + \alpha_n\partial_z\dot{\psi'}^\text{res}_{1,n}
	   + \alpha_n'\dot{\varXi}_n'^{(2)} + \alpha_n\ddot{\varXi}_n'^{(2)},
\end{align*}
it follows that (here we make use of Remark~\ref{rem:derivatives-psi-1})
\begin{equation*}
\alpha_n'\dot{\varXi'}_n + \alpha_n\ddot{\varXi'}_n
	= \lambda_n P_n + \lambda_n\alpha_n^2\bO(n^{1/3}\om_r^2(n)).
\end{equation*}
Analogously,
\begin{align*}
\alpha_n'\dot{\varXi}_n + \alpha_n\ddot{\varXi}_n
	=&\ P'_n + (\alpha'_n\beta_n + \lambda_n\alpha_n\beta_n)P_n
	\\
	 & - ({\alpha'_n}^2 + \lambda_n\alpha_n^2)Q_n
	   + \alpha_n\partial_z\dot{\psi}^\text{res}_{1,n}
	   + \alpha_n'\dot{\varXi}_n^{(2)} + \alpha_n\ddot{\varXi}_n^{(2)},
\end{align*}
whence
\begin{equation*}
\alpha_n'\dot{\varXi}_n + \alpha_n\ddot{\varXi}_n
	= \lambda_n\alpha_n^2\bO(\om_r(n)).
\end{equation*}
That is,
\begin{equation*}
\text{3rd term}
	= \lambda_n P_n + \lambda_n\alpha_n^2\bO(n^{1/3}\om_r^2(n)).
\end{equation*}
Finally, let us consider the last term. First we recall that
\begin{equation*}
\frac{\varXi_n}{\alpha_n} = \bO(\om_r(n)),\quad
\frac{\dot{\varXi}_n}{\alpha_n} = \bO(n^{1/3}\om_r(n))\quad\text{and}\quad
\frac{\dot{W}_n}{\dot{\tau}_n} = \bO(\om_r(n)).
\end{equation*}
Moreover, as a consequence of Lemma~\ref{lem:z-derivatives},
\begin{equation*}
\ddot{\varXi}_n = \bO(n^{1/2}\omega_r(n))
\quad\text{and}\quad
\ddot{\varXi'}_n = \bO(n^{5/6}\omega_r(n))
\end{equation*}
so
\begin{equation*}
\ddot{W}_n = \ddot{\varXi'}_n - b \ddot{\varXi}_n = \bO(n^{5/6}\omega_r(n))
\implies
\frac{\ddot{W}_n}{\dot{\tau}_n} = \bO(n^{1/3}\om_r(n)).
\end{equation*}
Therefore,
\begin{equation*}
\text{4th term}
	= \alpha_n\dot{\tau}_n\left(\frac{\dot{\varXi}_n}{\alpha_n}\frac{\dot{W}_n}{\dot{\tau}_n}
		-  \frac{\varXi_n}{\alpha_n}\frac{\ddot{W}_n}{\dot{\tau}_n}\right)
	= \alpha_n\dot{\tau}_n \bO(n^{1/3}\om^2_r(n)).
\end{equation*}
Finally,
\begin{equation*}
\psi_n\dot{w}_n
	= \alpha_n\dot{\tau}_n \left(1 + \frac{\varXi_n}{\alpha_n} + \frac{\dot{W}_n}{\dot{\tau}_n}
		+ \frac{\varXi_n}{\alpha_n}\frac{\dot{W}_n}{\dot{\tau}_n}\right)
	= \alpha_n\dot{\tau}_n\Bigl[1 + \bO(\om_r(n))\Bigr]
	= \alpha_n^2\lambda_n\Bigl[1 + \bO(\om_r(n))\Bigr].
\end{equation*}
The assertion follows from putting all these pieces together.
\end{proof}

\begin{theorem}
\label{thm:norming-constants}
Assume $(q,b)\in\bsA_r\times\R$. Then,
\begin{equation*}
\kappa_n(q,b)
	= - \log(-a_n')
		- 2\pi \frac{\int_0^\infty \ai(x+a_n')\ai'(x+a_n')q(x)dx}{(-a_n')^{1/2}}
		+ \frac{q(0) + b^2}{a_n'}
		+ \bO(n^{-1/6}\omega_r^2(n)),
\end{equation*}
where this expansion is uniform for $(q,q(0),b)$ on bounded subsets of $\bsA_r\times\R\times\R$.
\end{theorem}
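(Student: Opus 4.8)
The plan is to integrate the gradient of $\kappa_n(\cdot,b)$ along the segment $t\mapsto tq$, $t\in[0,1]$, as was done for the eigenvalues in Theorem~\ref{thm:eigenvalues}. Fix a ball $\cV\subset\bsA_r$ centred at $q\equiv 0$ and a bounded interval $\cJ\subset\R$, so that $tq\in\cV$ for $t\in[0,1]$. By Proposition~\ref{lem:norming-constants-are-analytic}, together with Proposition~\ref{lem:eingenvalue-is-real-analytic} (which gives $\inner{\partial\lambda_n/\partial q}{q}_{\sA_r}=\inner{\eta_n^2}{q}_2$),
\begin{equation*}
\kappa_n(q,b)-\kappa_n(0,b)
	= \int_0^1\inner{A_n(tq,b,\cdot)}{q}_{\sA_r}\,dt
	  + \int_0^1 B_n(tq,b)\inner{\eta_n^2(tq,b,\cdot)}{q}_2\,dt ,
\end{equation*}
with $A_n,B_n$ as in \eqref{eq:A-B}. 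Since Proposition~\ref{prop:spectral-data-unperturbed-mixed-bc} supplies $\kappa_n(0,b)=-\log(-a_n')+b^2/a_n'+\bO(n^{-4/3})$, it suffices to evaluate the two integrals uniformly on $\cV\times\cJ$ (with $q(0)$ also ranging over a bounded set).

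For the first integral I would insert the decomposition \eqref{eq:big-decomposition}. Its principal part $A_n^0$ is precisely what Lemma~\ref{lem:A_n^0} evaluates, producing $\pi(\tfrac32\pi n)^{-1/3}\int_0^\infty\ai^2(x+a_n')(q'+bq)\,dx-q(0)(\tfrac32\pi n)^{-2/3}$ plus an error $\bO(n^{-1/6}\om_r^2(n))$. The remaining summands of \eqref{eq:big-decomposition} are negligible: the scalar prefactors multiplying $s_0\psi_0$, $c_0\psi_0$ and $\phi_0'\psi_0+\phi_0\psi_0'=(\phi_0\psi_0)'$ are controlled by Lemma~\ref{lem:ugly-thing} and the crude sizes $\dot W_n/\dot\tau_n=\bO(\om_r(n))$, $1/\alpha_n=\bO(n^{1/6})$, $\dot w_n\asymp n^{1/2}$; the $L^2$-pairings of these three products against $q$ are bounded, via \eqref{eq:vbe-psi0-prime}--\eqref{eq:very-basic-estimates-2} (with an integration by parts and $\phi_0(\lambda_n,0)=1$ for the third), by $\bO(n^{-1/6}\om_r(n))$, $\bO(n^{1/6}\om_r(n))$ and $\bO(n^{1/6}\om_r(n))$; and the resulting products all fall within $\bO(n^{-1/6}\om_r^2(n))$. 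Finally $\inner{\varLambda_n(tq,b,\cdot)}{q}_{\sA_r}=\bO(\om_r^3(n))$ by the argument of Lemma~\ref{lem:Lambda_n} (which rests on Lemma~\ref{lem:the-2-boys}), and $\om_r^3(n)=\bO(n^{-1/6}\om_r^2(n))$.

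For the second integral, Lemma~\ref{lem:asymptotics-for-B} gives $B_n(tq,b)=-b+\bO(n^{1/3}\om_r^2(n))$ uniformly in $t$, while $\inner{\eta_n^2(tq,b,\cdot)}{q}_2=\tfrac{d}{dt}\lambda_n(tq,b)=\bO(n^{-1/3}\om_r(n))$ as in the proof of Theorem~\ref{thm:eigenvalues}; hence this integral equals $-b(\lambda_n(q,b)-\lambda_n(0,b))+\bO(\om_r^3(n))$, which by Theorem~\ref{thm:eigenvalues}, Proposition~\ref{prop:spectral-data-unperturbed-mixed-bc} and \eqref{eq:zeros-of-ai-prime} is $-b\,\pi(-a_n')^{-1/2}\int_0^\infty\ai^2(x+a_n')q\,dx+\bO(n^{-1/6}\om_r^2(n))$. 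Now assemble: the $b\,q$--terms coming from $A_n^0$ and from this second integral cancel, matching the absence of such a term in the statement. What remains is $\pi(-a_n')^{-1/2}\int_0^\infty\ai^2(x+a_n')q'\,dx$, the $q(0)$--term, and $\kappa_n(0,b)$. Integrating by parts via $\partial_x[\ai^2(x+a_n')]=2\ai(x+a_n')\ai'(x+a_n')$ turns $\int_0^\infty\ai^2(x+a_n')q'\,dx$ into $-2\int_0^\infty\ai(x+a_n')\ai'(x+a_n')q\,dx$ up to a boundary term $\propto\ai^2(a_n')q(0)$; since $a_n'$ is a zero of $\ai'$, the value $\ai(a_n')$ obeys the sharp extremal asymptotics $\pi\,\ai^2(a_n')=(-a_n')^{-1/2}(1+\bO(n^{-2}))$, so this boundary term together with $-q(0)(\tfrac32\pi n)^{-2/3}$ organise into $q(0)/a_n'$; replacing $(\tfrac32\pi n)^{1/3}$ by $(-a_n')^{1/2}(1+\bO(n^{-2}))$ in the surviving coefficient then yields the claimed formula, the uniformity being inherited from the lemmas used.

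Granting the auxiliary lemmas, the main difficulty is the second integral: $B_n$ is of size $\bO(1)$ yet carries an error as large as $\bO(n^{1/3}\om_r^2(n))$, so the argument must trade this against the smallness of $\partial\lambda_n/\partial q$ paired with $q$ and lean on the already-established expansion of $\lambda_n$; a companion difficulty is the bookkeeping of the several correction terms in \eqref{eq:big-decomposition} and of the boundary contribution from the final integration by parts, which is what produces the $q(0)/a_n'$ summand.
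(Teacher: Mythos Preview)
Your outline is essentially the paper's own proof: integrate the gradient along $t\mapsto tq$, split into the $A_n$- and $B_n$-pieces via \eqref{eq:A-B}, feed the $A_n$-part through the decomposition \eqref{eq:big-decomposition}, apply Lemma~\ref{lem:A_n^0} to $A_n^0$, Lemma~\ref{lem:asymptotics-for-B} to $B_n$, and Lemmas~\ref{lem:ugly-thing}--\ref{lem:Lambda_n} (together with the direct $\bO$-estimates you list) to the remainder $\varDelta_n$; then add $\kappa_n(0,b)$ from Proposition~\ref{prop:spectral-data-unperturbed-mixed-bc}. The paper stops at the form $\inner{\psi_0^2(-a_n',\cdot)}{q'}_2/(-a_n')^{1/2}+q(0)/a_n'$ and declares the assertion proved, whereas you go one step further and perform the integration by parts to the $\ai\,\ai'$ form.

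One caution on that last step: with the signs exactly as stated in Lemma~\ref{lem:A_n^0}, the term $-q(0)(\tfrac32\pi n)^{-2/3}\sim q(0)/a_n'$ and your IBP boundary term $-\pi\ai^2(a_n')q(0)/(-a_n')^{1/2}\sim q(0)/a_n'$ carry the \emph{same} sign, so they add to $2q(0)/a_n'$ rather than ``organise into $q(0)/a_n'$''. This is not a gap in the strategy---all the analytic ingredients are the right ones---but the sign bookkeeping between the IBP inside the proof of Lemma~\ref{lem:A_n^0} and your final IBP deserves a careful recount before you commit to the coefficient of $q(0)$.
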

\begin{proof}
Clearly,
\begin{align*}
\kappa_n(q,b) - \kappa_n(0,b)
	&= \int_0^1 \inner{\frac{\partial\kappa_n}{\partial(tq)}}{q}_{\sA_r} dt
	\\
	&= \int_{0}^{1} \inner{A_n(tq,b,\cdot)}{q}_{\sA_r} dt
		+ \int_{0}^{1} B_n(tq,b) \inner{\frac{\partial\lambda_n}{\partial(tq)}}{q}_{\sA_r} dt.
\end{align*}
In view of \eqref{eq:big-decomposition}, we have
\begin{equation*}
A_n(q,b,x) = A_n^0(q,b,x) + \varDelta_n(q,b,x),
\end{equation*}
where
\begin{align*}
(1+x)^r\varDelta_n(q,b,x)
	\defeq& \left(\frac{\lambda_n\dot{W}_n}{\dot{\tau}_n\dot{w}_n}
		- \frac{\varXi_n}{\alpha_n\psi_n}\right)s_0(\lambda_n,x)\psi_0(\lambda_n,x)
		- \frac{b\dot{W}_n}{\dot{\tau}_n\dot{w}_n}c_0(\lambda_n,x)\psi_0(\lambda_n,x)
	\\
	 & + \frac{\dot{W}_n}{\dot{\tau}_n\dot{w}_n}
		\left[\phi'_0(b,\lambda_n,x)\psi_0(\lambda_n,x)
		+ \phi_0(b,\lambda_n,x)\psi'_0(\lambda_n,x)\right]
	\\[1mm]
	 & + (1+x)^r\varLambda_n(q,b,x);
\end{align*}
the last term is defined in Lemma~\ref{lem:Lambda_n}. Clearly, Lemma~\ref{lem:A_n^0} yields
\begin{equation*}
\int_{0}^{1} \inner{A_n^0(tq,b,\cdot)}{q}_{\sA_r} dt
	= \frac{\inner{\psi_0^2(-a_n',\cdot)}{q' + b q}_2}{(-a_n')^{1/2}}
		+ \frac{q(0)}{a_n'}
		+ \bO(n^{-1/6}\om_r^2(n)),
\end{equation*}
while Lemma~\ref{lem:asymptotics-for-B} implies
\begin{equation*}
\int_{0}^{1} B_n(tq,b) \inner{\frac{\partial\lambda_n}{\partial(tq)}}{q}_{\sA_r} dt
	= - \,b \frac{\inner{\psi_0^2(-a_n',\cdot)}{q}_2}{(-a_n')^{1/2}} + \bO(n^{-1/3}\om^2_r(n))
\end{equation*}
so, up to this point,
\begin{equation*}
\kappa_n(q,b) - \kappa_n(0,b)
	= \frac{\inner{\psi_0^2(-a_n',\cdot)}{q'}_2}{(-a_n')^{1/2}}
		+ \frac{q(0)}{a_n'}
		+ \int_{0}^{1} \inner{\varDelta_n(tq,b,\cdot)}{q}_{\sA_r} dt
		+ \bO(n^{-1/6}\om_r^2(n)).
\end{equation*}

We claim that
\begin{equation*}
\int_{0}^{1} \inner{\varDelta_n(tq,b,\cdot)}{q}_{\sA_r} dt
	= \bO(n^{-1/2}\om_r(n)).
\end{equation*}
To begin with,
\begin{align*}
\inner{\varDelta_n(tq,b,\cdot)}{q}_{\sA_r}
	=& \left(\frac{\lambda_n\dot{W}_n}{\dot{\tau}_n\dot{w}_n}
		- \frac{\varXi_n}{\alpha_n\psi_n}\right)\inner{(s_0\psi_0)(\lambda_n,\cdot)}{q}_2
		- \frac{b\dot{W}_n}{\dot{\tau}_n\dot{w}_n}\inner{(c_0\psi_0)(\lambda_n,\cdot)}{q}_2
	\\
	 & + \frac{\dot{W}_n}{\dot{\tau}_n\dot{w}_n}\inner{(\phi_0\psi_0)'(\lambda_n,\cdot)}{q}_2
			+ \inner{\varLambda_n(q,b,\cdot)}{q}_{\sA_r}.
\end{align*}
Moreover,
\begin{equation*}
\inner{(s_0\psi_0)(\lambda_n,\cdot)}{q}_2
	= - \beta_n \inner{\psi_0^2(\lambda_n,\cdot)}{q}_2
		+ \alpha_n \inner{(\theta_0\psi_0)(\lambda_n,\cdot)}{q}_2
	= \bO(n^{-1/6}\om_r(n))
\end{equation*}
and, analogously,
\begin{equation*}
\inner{(c_0\psi_0)(\lambda_n,\cdot)}{q}_2 = \bO(n^{1/6}\om_r(n)).
\end{equation*}
Moving forward, an integration by parts yields
\begin{align*}
\inner{(\phi_0\psi_0)'(\lambda_n,\cdot)}{q}_2
	=& - q(0) - (\beta_n' - b \beta_n)\inner{\psi_0^2(\lambda_n,\cdot)}{q'}_2
		+ (\alpha_n' - b \alpha_n)\inner{(\psi_0\theta_0)(\lambda_n,\cdot)}{q'}_2
	\\
	=& - q(0) + \bO(n^{1/6}\om_r(n)).
\end{align*}
Thus, so far we have
\begin{equation*}
\inner{\varDelta_n(tq,b,\cdot)}{q}_{\sA_r}
	= \bO(n^{-1/2}\om_r(n)) + \inner{\varLambda_n(q,b,\cdot)}{q}_{\sA_r},
\end{equation*}
once we recall Lemma~\ref{lem:ugly-thing}. Thus, the assertion follows since the last term above is $\bO(\om_r^3(n))$ according to Lemma~\ref{lem:Lambda_n}.
\end{proof}


\appendix

\section{Appendix}
\label{sec:appendix}

\begin{remark}
\label{rem:alphas-and-betas}
Let us define
\begin{equation*}
\alpha_n	\defeq \psi_0(\lambda_n,0),\quad
\alpha_n'	\defeq \psi_0'(\lambda_n,0),\quad
\beta_n		\defeq \theta_0(\lambda_n,0)\quad\text{and}\quad
\beta_n'	\defeq \theta_0'(\lambda_n,0).
\end{equation*}
Also,
\begin{equation*}
\dot{\tau}_n
	\defeq \dot{w}(0,b,\lambda_n)
	= \lambda_n \alpha_n + b \alpha_n'\quad\text{and}\quad
\tau_n^\times
	\defeq \lambda_n \beta_n + b \beta_n'.
\end{equation*}
We observe that Theorem~\ref{thm:eigenvalues} implies
\begin{equation}
	\label{eq:uniform-estimate-eigenvalues}
	\lambda_n = \lambda_n(q,b) = - a_n' + \bO(n^{-1/3}\omega_r(n)),
\end{equation}
uniformly on bounded sets of $\bsA_r\times\R$.
\end{remark}

\begin{lemma}
\label{lem:alpha-etcetera}
The asymptotic formulas
\begin{align}
\alpha_n  &= (-1)^{n+1}(\tfrac32\pi n)^{-1/6}
			\left[1 + \bO(\omega^2_r(n))\right],\label{eq:alpha}
\\
\alpha_n' &= \bO(n^{1/6}\omega_r(n)), \nonumber
\\
\beta_n   &= \bO(n^{-1/6}\omega_r(n)) \quad\text{and}\nonumber
\\
\beta_n'  &= (-1)^{n+1}(\tfrac32\pi n)^{1/6}
			\left[1 + \bO(\omega^2_r(n))\right] \label{eq:beta-prime}
\end{align}
hold uniformly on bounded subsets of $\bsA_r\times\R$.
\end{lemma}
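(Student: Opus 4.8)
The plan is to substitute the eigenvalue asymptotics \eqref{eq:uniform-estimate-eigenvalues} into the classical large-argument expansions of the Airy functions and their derivatives \cite[\S 9.7]{nist}, which I would record, for $\lambda\to\infty$, as
\begin{gather*}
\sqrt{\pi}\,\ai(-\lambda) = \lambda^{-1/4}\bigl[\cos\zeta(\lambda) + \bO(\lambda^{-3/2})\bigr],
\qquad
\sqrt{\pi}\,\ai'(-\lambda) = \lambda^{1/4}\bigl[\sin\zeta(\lambda) + \bO(\lambda^{-3/2})\bigr],
\\
\sqrt{\pi}\,\bi(-\lambda) = \lambda^{-1/4}\bigl[-\sin\zeta(\lambda) + \bO(\lambda^{-3/2})\bigr],
\qquad
\sqrt{\pi}\,\bi'(-\lambda) = \lambda^{1/4}\bigl[\cos\zeta(\lambda) + \bO(\lambda^{-3/2})\bigr],
\end{gather*}
where $\zeta(\lambda)\defeq\tfrac23\lambda^{3/2}-\tfrac14\pi$. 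Recalling from Remark~\ref{rem:alphas-and-betas} that $\alpha_n=\sqrt{\pi}\,\ai(-\lambda_n)$, $\alpha_n'=\sqrt{\pi}\,\ai'(-\lambda_n)$, $\beta_n=\sqrt{\pi}\,\bi(-\lambda_n)$, $\beta_n'=\sqrt{\pi}\,\bi'(-\lambda_n)$, and that $\lambda_n=-a_n'+\bO(n^{-1/3}\omega_r(n))\to\infty$, the whole statement reduces to evaluating $\zeta(\lambda_n)$, $\lambda_n^{\pm1/4}$ and the error $\bO(\lambda_n^{-3/2})$ with the required precision. Uniformity on bounded subsets of $\bsA_r\times\R$ is then automatic, since among these quantities only $\lambda_n$ depends on $(q,b)$ and its error bound is uniform by Theorem~\ref{thm:eigenvalues}.

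The heart of the matter is the phase. A first-order binomial expansion in \eqref{eq:zeros-of-ai-prime} gives $\tfrac23(-a_n')^{3/2}=\pi(n-\tfrac34)+\bO(n^{-1})$, hence $\zeta(-a_n')=\pi(n-1)+\bO(n^{-1})$. Setting $\delta_n\defeq\lambda_n+a_n'=\bO(n^{-1/3}\omega_r(n))$, the Mean Value Theorem gives $\abs{\lambda_n^{3/2}-(-a_n')^{3/2}}\le\tfrac32\bigl(\max\{\lambda_n,-a_n'\}\bigr)^{1/2}\abs{\delta_n}=\bO(n^{1/3})\cdot\bO(n^{-1/3}\omega_r(n))=\bO(\omega_r(n))$, so $\zeta(\lambda_n)=\pi(n-1)+\bO(\omega_r(n))$. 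A second-order expansion of the cosine about the multiple of $\pi$ then yields $\cos\zeta(\lambda_n)=(-1)^{n-1}\cos\bigl(\bO(\omega_r(n))\bigr)=(-1)^{n+1}\bigl[1+\bO(\omega_r^2(n))\bigr]$, while $\sin\zeta(\lambda_n)=(-1)^{n-1}\sin\bigl(\bO(\omega_r(n))\bigr)=\bO(\omega_r(n))$. For the amplitudes, \eqref{eq:zeros-of-ai-prime} gives $(-a_n')^{\pm1/4}=(\tfrac32\pi n)^{\pm1/6}[1+\bO(n^{-1})]$, while $\lambda_n^{\pm1/4}=(-a_n')^{\pm1/4}[1+\bO(n^{-4/3}\omega_r(n))]$, so $\lambda_n^{\pm1/4}=(\tfrac32\pi n)^{\pm1/6}[1+\bO(n^{-1})]$; also $\bO(\lambda_n^{-3/2})=\bO(n^{-1})$.

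Putting these together, and using $n^{-1}=\bO(\omega_r^2(n))$ (valid since $\omega_r(n)\ge n^{-1/3}$), one obtains
\begin{align*}
\alpha_n&=(\tfrac32\pi n)^{-1/6}[1+\bO(n^{-1})]\cdot\bigl[(-1)^{n+1}(1+\bO(\omega_r^2(n)))+\bO(n^{-1})\bigr]\\
	&=(-1)^{n+1}(\tfrac32\pi n)^{-1/6}\bigl[1+\bO(\omega_r^2(n))\bigr],
\end{align*}
which is \eqref{eq:alpha}; the identical computation for $\beta_n'=\lambda_n^{1/4}[\cos\zeta(\lambda_n)+\bO(\lambda_n^{-3/2})]$ gives $\beta_n'=(-1)^{n+1}(\tfrac32\pi n)^{1/6}[1+\bO(\omega_r^2(n))]$, which is \eqref{eq:beta-prime}. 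For the remaining two the relevant trigonometric factor is $\bO(\omega_r(n))$ rather than near $\pm1$, so $\alpha_n'=(\tfrac32\pi n)^{1/6}[\bO(\omega_r(n))+\bO(n^{-1})]=\bO(n^{1/6}\omega_r(n))$ and $\beta_n=(\tfrac32\pi n)^{-1/6}[\bO(\omega_r(n))+\bO(n^{-1})]=\bO(n^{-1/6}\omega_r(n))$. The delicate point I would check most carefully is the balance between the $n^{1/3}$ magnification incurred in passing from $\lambda$ to $\tfrac23\lambda^{3/2}$ and the $n^{-1/3}\omega_r(n)$ size of $\delta_n$: it is exactly this balance that keeps the phase error at $\bO(\omega_r(n))$ and, once the flat behavior of $\cos$ at a multiple of $\pi$ is exploited, the relative error in $\alpha_n$ and $\beta_n'$ at $\bO(\omega_r^2(n))$. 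Everything else is bookkeeping.
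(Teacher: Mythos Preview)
Your argument is correct and coincides with the paper's own proof: both substitute the eigenvalue asymptotics \eqref{eq:uniform-estimate-eigenvalues} into the large-argument expansions of the Airy functions, reduce to the computation of $\tfrac23\lambda_n^{3/2}$ and $\lambda_n^{\pm1/4}$, and exploit that the phase lands within $\bO(\omega_r(n))$ of $\pi(n-1)$ so that $\cos$ picks up only an $\bO(\omega_r^2(n))$ correction while $\sin$ is $\bO(\omega_r(n))$. You actually spell out the cases $\alpha_n'$ and $\beta_n$ and the Mean Value step for the phase more explicitly than the paper, which simply states \eqref{eq:powers-of-eigenvalues}--\eqref{eq:cosine-powers-eigenvalues} and leaves those two cases to the reader.
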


\begin{proof}
We write the proof for $\alpha_n$ and $\beta_n'$ since the remaining formulas
can be proven following the same reasoning.
In view of \eqref{eq:zeros-of-ai-prime} and \eqref{eq:uniform-estimate-eigenvalues}, we see that
\begin{equation}
\label{eq:powers-of-eigenvalues}
\lambda^{3/2}_n= \tfrac32 \pi (n-\tfrac34) + \bO(\omega_r(n)),
\quad
\lambda^{1/4}_n = (\tfrac32 \pi n)^{1/6}
		\left[1+\bO(n^{-1})\right].
\end{equation}
As a consequence,
\begin{equation}
\label{eq:cosine-powers-eigenvalues}
\cos(\tfrac23 \lambda^{3/2}_n - \tfrac{\pi}{4})
	= \cos\left(\pi(n-1)+\bO(\omega_r(n))\right)
	= (-1)^{n+1}+\bO(\omega^2_r(n)).
\end{equation}
Since
\begin{equation*}
\alpha_n
	= \sqrt{\pi} \ai(-\lambda_n)
	= \frac{1}{\lambda^{1/4}_n}
		\left[\cos(\tfrac{2}{3}\lambda^{3/2}_n - \tfrac{\pi}{4})
		+ \bO(\lambda_n^{-3/2})\right],
\end{equation*}
we obtain \eqref{eq:alpha}. Similarly,
\begin{equation}
\label{eq:asymptotic-bi}
\beta_n'
	= \sqrt{\pi} \bi'(-\lambda_n)
	= \lambda^{1/4}_n
		\left[ \cos( \tfrac{2}{3}\lambda^{3/2}_n - \tfrac{\pi}{4})
		+ \bO(\lambda_n^{-3/2}) \right].
\end{equation}
Thus, \eqref{eq:powers-of-eigenvalues}--\eqref{eq:asymptotic-bi}
yields \eqref{eq:beta-prime}.
\end{proof}

\begin{lemma}
\label{lem:pieces}
The asymptotic formulas
\begin{align*}
\frac{\alpha_n' - b\alpha_n}{\dot{\tau}_n}
	&= \bO(n^{-1/3}\omega_r(n)),
	\\
\frac{\beta_n' - b\beta_n}{\dot{\tau}_n}
	&= (\tfrac32\pi n)^{-1/3}
		\left[1 + \bO(n^{1/6}\omega_r(n))\right]
\quad\text{and}
	\\
\frac{\tau_n^\times}{\dot{\tau}_n} - \frac{\beta_n}{\alpha_n}
	&= b(\tfrac32\pi n)^{-1/3}\left[1 + \bO(\omega_r^2(n))\right]
\end{align*}
hold uniformly on bounded subsets of $\bsA_r\times\R$.
\end{lemma}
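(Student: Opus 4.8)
The plan is to reduce all three identities to elementary manipulations built on three ingredients: the asymptotics of $\alpha_n,\alpha_n',\beta_n,\beta_n'$ supplied by Lemma~\ref{lem:alpha-etcetera}, the power expansions \eqref{eq:powers-of-eigenvalues} of $\lambda_n$ (themselves consequences of \eqref{eq:zeros-of-ai-prime} and \eqref{eq:uniform-estimate-eigenvalues}), and the Wronskian identity $\alpha_n\beta_n'-\alpha_n'\beta_n=W(\psi_0(\lambda_n),\theta_0(\lambda_n))=1$. First I would pin down the common denominator $\dot{\tau}_n=\lambda_n\alpha_n+b\alpha_n'$: since $\lambda_n\alpha_n=(-1)^{n+1}(\tfrac32\pi n)^{1/2}[1+\bO(\omega_r^2(n))]$ (combine $\alpha_n=(-1)^{n+1}(\tfrac32\pi n)^{-1/6}[1+\bO(\omega_r^2(n))]$ with $\lambda_n=(\tfrac32\pi n)^{2/3}[1+\bO(n^{-1})]$) while $b\alpha_n'=\bO(n^{1/6}\omega_r(n))$ is a relative perturbation of size $\bO(n^{-1/3}\omega_r(n))$, and because $n^{-1/3}\omega_r(n)=\bO(\omega_r^2(n))$ for both ranges of $r$, one obtains $\dot{\tau}_n=(-1)^{n+1}(\tfrac32\pi n)^{1/2}[1+\bO(\omega_r^2(n))]$; in particular $|\dot{\tau}_n|$ is bounded above and below by constant multiples of $n^{1/2}$, uniformly for $b$ in a bounded interval.

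With this in hand the first two formulas are immediate. For the first, $\alpha_n'-b\alpha_n=\bO(n^{1/6}\omega_r(n))$ because $\alpha_n'=\bO(n^{1/6}\omega_r(n))$ dominates $b\alpha_n=\bO(n^{-1/6})$ (using $\omega_r(n)\ge n^{-1/3}$), so dividing by $\dot{\tau}_n$ gives $\bO(n^{-1/3}\omega_r(n))$. For the second, $\beta_n'-b\beta_n=(-1)^{n+1}(\tfrac32\pi n)^{1/6}[1+\bO(\omega_r^2(n))]$, the term $b\beta_n=\bO(n^{-1/6}\omega_r(n))$ being a relative perturbation subsumed by $\bO(\omega_r^2(n))$; dividing by $\dot{\tau}_n$ yields $(\tfrac32\pi n)^{-1/3}[1+\bO(\omega_r^2(n))]$, which a fortiori is $(\tfrac32\pi n)^{-1/3}[1+\bO(n^{1/6}\omega_r(n))]$. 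For the third I would first exploit the Wronskian:
\[
\alpha_n\tau_n^\times-\beta_n\dot{\tau}_n=\alpha_n(\lambda_n\beta_n+b\beta_n')-\beta_n(\lambda_n\alpha_n+b\alpha_n')=b(\alpha_n\beta_n'-\alpha_n'\beta_n)=b,
\]
so that $\tfrac{\tau_n^\times}{\dot{\tau}_n}-\tfrac{\beta_n}{\alpha_n}=\tfrac{b}{\alpha_n\dot{\tau}_n}$; then $\alpha_n\dot{\tau}_n=\lambda_n\alpha_n^2+b\alpha_n\alpha_n'=(\tfrac32\pi n)^{1/3}[1+\bO(\omega_r^2(n))]+\bO(\omega_r(n))=(\tfrac32\pi n)^{1/3}[1+\bO(\omega_r^2(n))]$, giving exactly $b(\tfrac32\pi n)^{-1/3}[1+\bO(\omega_r^2(n))]$.

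Along the way I would also record the by-product $\alpha_n/\dot{\tau}_n=\lambda_n^{-1}[1+\bO(\omega_r^2(n))]=(\tfrac32\pi n)^{-2/3}[1+\bO(\omega_r^2(n))]$ (since $b\alpha_n'/(\lambda_n\alpha_n)=\bO(n^{-1/3}\omega_r(n))$), as this is the ``comment in the proof'' invoked later in Lemma~\ref{lem:A_n^0}. There is no serious obstacle here: the whole argument is bookkeeping of the asymptotics from Lemma~\ref{lem:alpha-etcetera}. The only points needing a little care are (i) keeping straight which of the errors $n^{-1}$, $n^{-1/3}\omega_r(n)$ and $\omega_r^2(n)$ dominates at each step — they coincide only for $r\ge 2$, whereas for $r\in(1,2)$ one has $\omega_r^2(n)=n^{-2/3}\log n$ — and (ii) checking uniformity in $b$, which is trivial since $b$ enters only through the linear terms $b\alpha_n$, $b\alpha_n'$, $b\beta_n$. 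The one genuine idea is the Wronskian collapse $\alpha_n\tau_n^\times-\beta_n\dot{\tau}_n=b$, which is what turns the third formula into a one-line computation.
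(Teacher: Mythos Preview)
Your argument is correct. The first two formulas and the derivation of $\dot{\tau}_n=(-1)^{n+1}(\tfrac32\pi n)^{1/2}[1+\bO(\omega_r^2(n))]$ match the paper's approach exactly: both are read off directly from Lemma~\ref{lem:alpha-etcetera}. Your bookkeeping of which error terms dominate (in particular $n^{-1/3}\omega_r(n)=\bO(\omega_r^2(n))$ and $\omega_r(n)\ge n^{-1/3}$) is accurate, and your remark about the by-product $\alpha_n/\dot{\tau}_n$ correctly identifies the ``comment in the proof'' cited in Lemma~\ref{lem:A_n^0}.

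For the third formula you take a different, and somewhat cleaner, route than the paper. The paper expands
\[
\frac{\tau_n^\times}{\dot{\tau}_n}
	= \frac{\lambda_n\beta_n + b\beta_n'}{\lambda_n\alpha_n\bigl(1 + b\tfrac{\alpha_n'}{\lambda_n\alpha_n}\bigr)}
	= \Bigl(\frac{\beta_n}{\alpha_n} + b\frac{\beta_n'}{\lambda_n\alpha_n}\Bigr)
		\bigl[1 + \bO(n^{-1/3}\omega_r(n))\bigr]
\]
via the geometric series and then subtracts $\beta_n/\alpha_n$, tracking the cross-terms. Your Wronskian collapse $\alpha_n\tau_n^\times - \beta_n\dot{\tau}_n = b(\alpha_n\beta_n'-\alpha_n'\beta_n)=b$ reduces the question immediately to the single quantity $b/(\alpha_n\dot{\tau}_n)$, which avoids that bookkeeping entirely. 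The paper's approach has the minor advantage of displaying the intermediate estimates $\alpha_n'/(\lambda_n\alpha_n)$ and $\beta_n'/(\lambda_n\alpha_n)$ explicitly (they are reused elsewhere), but your argument is shorter and makes the structural reason for the answer transparent.
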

\begin{proof}
The first two asymptotic expansions are direct consequences of the previous lemma, along
with
\begin{equation*}
\dot{\tau}_n = (-1)^{n+1}(\tfrac32\pi n)^{1/2}
			\left[1 + \bO(\omega^2_r(n))\right],
\end{equation*}
which holds locally uniformly on $\bsA_r\times\R$. To obtain the remaining one, we first note
that
\begin{equation*}
\frac{\alpha_n'}{\lambda_n\alpha_n}
	= \bO(n^{-1/3}\omega_r(n))\quad\text{and}\quad
\frac{\beta_n'}{\lambda_n\alpha_n}
	= (\tfrac32\pi n)^{-1/3}\left[1 + \bO(\omega_r^2(n))\right].
\end{equation*}
Therefore,
\begin{align*}
\frac{\tau_n^\times}{\dot{\tau}_n}
	= \frac{\lambda_n\beta_n + b\beta_n'}
		{\lambda_n\alpha_n\left(1 + b\frac{\alpha_n'}{\lambda_n\alpha_n}\right)}
	&= \left(\frac{\beta_n}{\alpha_n} + b\frac{\beta_n'}{\lambda_n\alpha_n}\right)
		\left[1 + \bO(n^{-1/3}\omega_r(n))\right]
	\\
	&= \frac{\beta_n}{\alpha_n} + b\frac{\beta_n'}{\lambda_n\alpha_n}
		+ \bO(n^{-1/3}\omega_r^2(n)),
\end{align*}
which leads to the result.
\end{proof}

The function $g_A(z) = \exp(-\tfrac23\re z^{3/2})$ is defined in terms of the principal branch of the square
root, that is,
\begin{equation*}
g_A(r e^{i\theta}) = \exp(-\tfrac23 r^{3/2} \cos(\tfrac32\theta)),\quad \theta\in(-\pi,\pi].
\end{equation*}
Therefore, in order to evaluate $g_A(-z)$, we must choose the argument of $-z$ according to the rule
\begin{equation*}
-z = \abs{z} e^{i\eta}\quad\text{with}\quad
\eta = 	\begin{dcases}
		\arg(z) +\pi,& \text{if }\arg(z)\in(-\pi,0],
		\\
		\arg(z) -\pi,& \text{if }\arg(z)\in(0,\pi].
		\end{dcases}
\end{equation*}
Thus,
\begin{equation}
\label{eq:bound-gA}
g_A(-z)
	= \begin{dcases}
		\exp(\tfrac23\im z^{3/2}),& \text{if }\arg(z)\in(0,\pi],
		\\
		\exp(-\tfrac23\im z^{3/2}),& \text{if }\arg(z)\in(-\pi,0].
	  \end{dcases}
\end{equation}
Alternatively,
\begin{equation}
\label{eq:bound-gA-alt}
g_A(-z)
	= \begin{dcases}
		\exp(\tfrac23\abss{\im z^{3/2}}),& \text{if }\arg(z)\in(-\tfrac23\pi,\tfrac23\pi],
		\\
		\exp(-\tfrac23\abss{\im z^{3/2}}),& \text{if }\arg(z)\in(-\pi,-\tfrac23\pi]\cup(\tfrac23\pi,\pi].
	  \end{dcases}
\end{equation}

\begin{lemma}
\label{lem:another-bound-on-gA}
There exists $C_A>0$ such that
\begin{equation*}
	g_A(-(z + \abs{z}^{-1/2}e^{it}))
	\le C_A g_A(-z),
\end{equation*}
for all $z\in\C$ such that $\abs{z}\ge 2$ and $t\in[0,2\pi]$.
\end{lemma}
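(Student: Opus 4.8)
The plan is to reduce the claimed inequality to an upper bound on $\re\bigl((-z)^{3/2}\bigr)$ and then split into two regimes according to the distance from the branch cut. Write $u \defeq -z$, so $\abs u = \abs z \ge 2$, and $h \defeq \abs z^{-1/2}e^{it}$, so $\abs h = \abs u^{-1/2}$. Since $g_A(w) = \exp(-\tfrac23\re w^{3/2})$ with $w^{3/2}$ the principal branch, we have $g_A(-z) = \exp(-\tfrac23\re u^{3/2})$ and $g_A(-(z+h)) = \exp(-\tfrac23\re(u-h)^{3/2})$, so it suffices to produce an absolute constant $M$ with $\re u^{3/2} - \re(u-h)^{3/2} \le M$; then $C_A = e^{2M/3}$ works. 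The segment joining $u-h$ and $u$ is a radius of the closed disk $\overline{D(u,\abs u^{-1/2})}$, and everything hinges on whether that disk meets the cut $(-\infty,0]$ of $w\mapsto w^{3/2}$.

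First I would treat the case $\operatorname{dist}(u,(-\infty,0]) > \abs u^{-1/2}$, which (using $\abs u \ge 2$) means either $\re u \ge 0$, or $\re u < 0$ with $\abs{\im u} > \abs u^{-1/2}$. Then $\overline{D(u,\abs u^{-1/2})}$ lies entirely in the slit plane $\C\setminus(-\infty,0]$, where $w^{3/2}$ is holomorphic with derivative $\tfrac32 w^{1/2}$; integrating along the radius gives $u^{3/2} - (u-h)^{3/2} = \int_{[u-h,u]}\tfrac32 w^{1/2}\,dw$, and since $\abs w \le \abs u + \abs u^{-1/2} \le 2\abs u$ on that disk, the integral is bounded in modulus by $\tfrac32(2\abs u)^{1/2}\abs u^{-1/2} = 3/\sqrt2$. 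In particular $\re u^{3/2} - \re(u-h)^{3/2} \le 3/\sqrt2$, with no need to track signs.

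The remaining case is $\re u < 0$ and $\abs{\im u} \le \abs u^{-1/2}$, i.e. $u$ hugs the negative real axis, and here the gradient bound is useless since the disk can cross the cut. The plan is instead to bound $\re u^{3/2}$ and $\re(u-h)^{3/2}$ individually, via the elementary estimate: if $\re w < 0$ and $\abs{\im w} \le c\abs w^{-1/2}$, then $\abs{\re w^{3/2}} \le \tfrac{3\pi c}{4}$. To see this, write $w = \abs w e^{i(\pi - \theta)}$ with $\theta \in [0,\tfrac\pi2)$ (conjugating if $\im w < 0$, which changes neither $\re w^{3/2}$ nor the hypotheses), note $\sin\theta = \abs{\im w}/\abs w \le c\abs w^{-3/2}$ and hence $\theta \le \tfrac\pi2\sin\theta \le \tfrac{\pi c}{2}\abs w^{-3/2}$ by Jordan's inequality, and use $\re w^{3/2} = -\abs w^{3/2}\sin(\tfrac32\theta)$ to get $\abs{\re w^{3/2}} \le \tfrac32\abs w^{3/2}\theta \le \tfrac{3\pi c}{4}$. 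Applying this to $u$ with $c=1$ gives $\abs{\re u^{3/2}} \le \tfrac{3\pi}{4}$. Then I would check that $u - h$ stays in the left half-plane (because $\abs{\re u} \ge (\abs u^2 - \abs u^{-1})^{1/2}$ dominates $\abs{\re h}\le\abs u^{-1/2}$ for $\abs u\ge2$), that $\abs{\im(u-h)} \le 2\abs u^{-1/2}$, and that $\abs{u-h} \le \tfrac32\abs u$, so $\abs{\im(u-h)} \le \sqrt6\,\abs{u-h}^{-1/2}$; the same estimate with $c=\sqrt6$ then gives $\abs{\re(u-h)^{3/2}} \le \tfrac{3\pi\sqrt6}{4}$. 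Hence $\re u^{3/2} - \re(u-h)^{3/2} \le \tfrac{3\pi}{4}(1+\sqrt6)$.

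Taking $M \defeq \tfrac{3\pi}{4}(1+\sqrt6)$, which dominates $3/\sqrt2$, covers both cases and yields the lemma with $C_A = e^{2M/3}$. The only genuine obstacle is the branch cut of $w^{3/2}$: away from $(-\infty,0]$ the bound is a one-line mean-value estimate, and the real work sits in the thin layer $\{\re u < 0,\ \abs{\im u}\le\abs u^{-1/2}\}$, where the key observation is that $\re w^{3/2}$ is uniformly small there (of size $\bO(\abs w^{1/2}\abs{\im w})$), so no cancellation across the cut needs to be controlled.
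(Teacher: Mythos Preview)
Your proof is correct and follows a genuinely different route from the paper's. The paper partitions the annulus $\{\abs{z}\ge 2\}$ into four overlapping angular sectors $D_+,D_-,D_R,D_L$, argues that the circle $C_z=\{z+\abs{z}^{-1/2}e^{it}\}$ is entirely contained in at least one of them, and in each sector uses the explicit identities \eqref{eq:bound-gA}--\eqref{eq:bound-gA-alt} for $g_A(-z)$ together with a first-order Taylor expansion of $(z+\abs{z}^{-1/2}e^{it})^{3/2}$ in powers of $\abs{z}^{-3/2}$; the bound then comes out of the $\bO$-term in that expansion. Your argument instead works with $u=-z$ and splits according to whether the disk $\overline{D(u,\abs{u}^{-1/2})}$ meets the branch cut $(-\infty,0]$: away from the cut you get the bound in one stroke from the line-integral (mean-value) estimate for the holomorphic $w\mapsto w^{3/2}$, and in the thin strip $\{\re u<0,\ \abs{\im u}\le\abs{u}^{-1/2}\}$ you observe that $\re w^{3/2}$ is already uniformly $\bO(1)$ for both $u$ and $u-h$, so no comparison across the cut is needed. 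Your approach is more conceptual, avoids the explicit trigonometric expansions, and yields an explicit constant; the paper's approach is more computational but stays closer to the concrete formulas for $g_A$ used elsewhere in the appendix.
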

\begin{proof}
Let us define
\begin{gather*}
D_+ \defeq \left\{re^{i\theta} : r\ge 2\land\theta\in(0,\pi]\right\},\quad
D_- \defeq \left\{re^{i\theta} : r\ge 2\land\theta\in(-\pi,0]\right\},
\\
D_R \defeq \left\{re^{i\theta} : r\ge 2\land\theta\in(-\tfrac23\pi,\tfrac23\pi]\right\}
	\;\text{ and }\;
D_L \defeq \left\{re^{i\theta} : r\ge 2\land\theta\in(-\pi,-\tfrac23\pi]\cup(\tfrac23\pi,\pi]\right\}.
\end{gather*}
Consider $z = \abs{z}e^{i\theta}$ with $\abs{z}\ge 2$. Then, it is not difficult to see that the curve
\begin{equation*}
C_z\defeq\left\{z + \abs{z}^{-1/2}e^{it}: t\in[0,2\pi]\right\}
\end{equation*}
lies entirely within $D_+$, $D_-$, $D_R$ or $D_L$.
Let us assume $C_z\subset D_+$. Then, in view of \eqref{eq:bound-gA},
\begin{equation*}
g_A(-(z + \abs{z}^{-1/2}e^{it}))
	= e^{\frac23\im(z + \abs{z}^{-1/2}e^{it})^{3/2}}.
\end{equation*}
Clearly,
\begin{equation*}
z + \abs{z}^{-1/2}e^{it} = z[1 + \abs{z}^{-3/2}e^{i(t-\theta)}]
\quad\text{and}\quad
1 + \abs{z}^{-3/2}e^{i(t-\theta)} = \kappa e^{i u},
\end{equation*}
where
\begin{equation*}
\kappa
	= \sqrt{1 + 2 \abs{z}^{-3/2}\cos(t-\theta) + \abs{z}^{-3}}
	= 1 + \bO(\abs{z}^{-3/2})
\end{equation*}
as $\abs{z}\to\infty$ (we will tacitly assume this limit from now on). Also,
\begin{equation*}
\tan u
	= \abs{z}^{-3/2} \frac{\sin(t-\theta)}{1 + \abs{z}^{-3/2} \cos(t-\theta)}
	= \abs{z}^{-3/2} \sin(t-\theta) [1 + \bO(\abs{z}^{-3/2})],
\end{equation*}
which in turn implies
\begin{equation*}
u = \abs{z}^{-3/2} \sin(t-\theta) [1 + \bO(\abs{z}^{-3/2})].
\end{equation*}
Therefore,
\begin{equation*}
\im(z + \abs{z}^{-1/2}e^{it})^{3/2}
	= \abs{z}^{3/2}\kappa^{3/2}\sin(\tfrac32\theta + \tfrac32u)
	= \abs{z}^{3/2}[1 + \bO(\abs{z}^{-3/2})]\sin(\tfrac32\theta + \tfrac32u).
\end{equation*}
Moreover,
\begin{align*}
\sin(\tfrac32\theta + \tfrac32u)
	&= \sin(\tfrac32\theta)\cos(\tfrac32u) + \cos(\tfrac32\theta)\sin(\tfrac32u)
	\\
	&=  \sin(\tfrac32\theta)[1 + \bO(\abs{z}^3)]
		+ \tfrac32 \abs{z}^{-3/2}\cos(\tfrac32\theta)\sin(t-\theta)[1 + \bO(\abs{z}^{-3/2})].
\end{align*}
Thus,
\begin{equation*}
\im(z + \abs{z}^{-1/2}e^{it})^{3/2}
	= \left[\abs{z}^{3/2}\sin(\tfrac32\theta) + \tfrac32\cos(\tfrac32\theta)\sin(t-\theta)\right]
		[1 + \bO(\abs{z}^{-3/2})],
\end{equation*}
implying the existence of a positive constant $G_+$ such that
\begin{equation*}
g_A(-(z + \abs{z}^{-1/2}e^{it}))
	\le G_+ e^{\frac23\im{z^{3/2}}},\quad t\in[0,2\pi],
\end{equation*}
for all $z$ such that $C_z\subset D_+$ (and $\abs{z}\ge 2$). Analogously, there exists $G_- > 0$ such
that
\begin{equation*}
g_A(-(z + \abs{z}^{-1/2}e^{it}))
	\le G_- e^{-\frac23\im{z^{3/2}}},\quad t\in[0,2\pi],
\end{equation*}
for all $z$ such that $C_z\subset D_-$. Also, now resorting to \eqref{eq:bound-gA-alt}, either
\begin{equation*}
g_A(-(z + \abs{z}^{-1/2}e^{it}))
	\le G_R e^{\frac23\abs{\im{z^{3/2}}}}
\quad\text{or}\quad
g_A(-(z + \abs{z}^{-1/2}e^{it}))
	\le G_L e^{-\frac23\abs{\im{z^{3/2}}}},
\end{equation*}
whenever $C_z\subset D_R$ or $C_z\subset D_L$, respectively. The assertion now follows.
\end{proof}

We refer to Remark~\ref{rem:omegas} for the definition of $\Omega_r$ and its relation to $\om_r$
and certain positive constant $C_r$.

\begin{lemma}
\label{lem:z-derivatives}
Given a bounded subset $\cU\subset\sA_r$, let us define  $C_\cU\defeq C_r\sup\{\norm{q}_{\cA_r} : q\in\cU\}$.
Suppose $\abs{z}\ge 2$. Then,
\begin{equation}
\label{eq:derivatives-Xi}
	\abs{\partial_z^n\varXi(q,z,0)}
	\le  n!C_\cU e^{C_\cU\Omega_r(z - \abs{z}^{-1/2})}
		\frac{\abs{z}^{n/2}}{\sigma(z - \abs{z}^{-1/2})}\Omega_r(z - \abs{z}^{-1/2})g_A(-z)
\end{equation}
and
\begin{equation}
\label{eq:derivatives-Xi-prime}
	\abs{\partial_z^n\varXi'(q,z,0)}
	\le n!C_\cU e^{C_\cU\Omega_r(z - \abs{z}^{-1/2})}
		\abs{z}^{n/2}\sigma(z + \abs{z}^{-1/2})\Omega_r(z - \abs{z}^{-1/2})g_A(-z).
\end{equation}
for all $q\in\cU$.
\end{lemma}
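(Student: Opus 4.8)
The plan is to deduce both estimates from the Cauchy integral formula on the circle $C_z$ of radius $\abs{z}^{-1/2}$ centred at $z$ -- precisely the curve appearing in Lemma~\ref{lem:another-bound-on-gA} -- controlling the integrand by the pointwise bounds \eqref{eq:about-Xi} and \eqref{eq:about-Xi-prime} together with that lemma. Since $\varXi(q,\cdot,0)=\psi(q,\cdot,0)-\psi_0(\cdot,0)$ and $\varXi'(q,\cdot,0)$ are entire functions of $z$ (as recalled in Section~\ref{sec:prelim}), for $\abs{z}\ge 2$ we have
\[
\partial_z^n\varXi(q,z,0)=\frac{n!}{2\pi i}\oint_{C_z}\frac{\varXi(q,\zeta,0)}{(\zeta-z)^{n+1}}\,d\zeta ,
\qquad
C_z=\{\,z+\abs{z}^{-1/2}e^{it}:t\in[0,2\pi]\,\},
\]
and the standard Cauchy estimate (the radius being $\abs{z}^{-1/2}$) yields
\[
\abs{\partial_z^n\varXi(q,z,0)}\le n!\,\abs{z}^{n/2}\sup_{\zeta\in C_z}\abs{\varXi(q,\zeta,0)} ,
\]
with the analogous inequality for $\varXi'$. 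This accounts for the factor $\abs{z}^{n/2}$ in \eqref{eq:derivatives-Xi} and \eqref{eq:derivatives-Xi-prime}, so it only remains to estimate the supremum over $C_z$.

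For \eqref{eq:derivatives-Xi} I would insert \eqref{eq:about-Xi} at $x=0$, that is $\abs{\varXi(q,\zeta,0)}\le C\,\om(q,\zeta)e^{C\om(q,\zeta)}g_A(-\zeta)/\sigma(\zeta)$, and then combine three facts: (i) $\om(q,\zeta)\le C_r\norm{q}_{\sA_r}\Omega_r(\zeta)$ from Remark~\ref{rem:omegas}; (ii) on $C_z$ one has $\abs{z}-\abs{z}^{-1/2}\le\abs{\zeta}\le\abs{z}+\abs{z}^{-1/2}$, and since $\abs{z}\ge 2$ both $\Omega_r$ and $1/\sigma$ are decreasing in the modulus throughout this range, so $\Omega_r(\zeta)\le\Omega_r(z-\abs{z}^{-1/2})$ and $\sigma(\zeta)^{-1}\le\sigma(z-\abs{z}^{-1/2})^{-1}$ (these being evaluated at modulus $\abs{z}-\abs{z}^{-1/2}$, which is legitimate since both functions depend on the modulus alone); and (iii) $g_A(-\zeta)\le C_A\,g_A(-z)$ for every $\zeta\in C_z$, by Lemma~\ref{lem:another-bound-on-gA}. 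Collecting these estimates and absorbing the absolute constants $C$ and $C_A$ (and, if necessary, enlarging $C_r$) into $C_\cU$ gives \eqref{eq:derivatives-Xi}. The bound \eqref{eq:derivatives-Xi-prime} follows identically from \eqref{eq:about-Xi-prime} at $x=0$; the only difference is that the factor $\sigma(\zeta)$ now appears in the numerator, and since $\sigma$ is increasing in the modulus with $\abs{\zeta}\le\abs{z}+\abs{z}^{-1/2}$ on $C_z$, it is bounded by $\sigma(z+\abs{z}^{-1/2})$, exactly the factor in \eqref{eq:derivatives-Xi-prime}.

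The only genuinely nontrivial ingredient is the uniform comparison $g_A(-\zeta)\le C_A\,g_A(-z)$ over the small circle $C_z$, and this has already been isolated as Lemma~\ref{lem:another-bound-on-gA}, whose proof is where the hypothesis $\abs{z}\ge 2$ is needed (to keep $C_z$ within one of the sectors $D_+$, $D_-$, $D_R$, $D_L$). I expect the rest to be routine bookkeeping: verifying that the radius $\abs{z}^{-1/2}$ is the one that balances the Cauchy factor $\abs{z}^{n/2}$ against the near-constancy of $\om$, $\sigma$ and $g_A$ along $C_z$ -- a fixed radius would destroy the decay carried by $\Omega_r$ and $g_A$, a smaller one would weaken the power of $\abs{z}$ -- and checking that the exponential factor $e^{C\om(q,\zeta)}$ stays controlled, which it does because $\Omega_r$ is bounded and $\Omega_r(\zeta)\le\Omega_r(z-\abs{z}^{-1/2})$ on $C_z$.
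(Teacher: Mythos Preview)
Your proof is correct and follows essentially the same approach as the paper: Cauchy estimates on the circle $C_z$ of radius $\abs{z}^{-1/2}$, combined with the pointwise bounds for $\varXi$ and $\varXi'$, the monotonicity of $\sigma$ and $\Omega_r$ in the modulus, and Lemma~\ref{lem:another-bound-on-gA} to control $g_A$. The only cosmetic difference is that the paper applies the Cauchy formula termwise to each $\psi_k$ in the series $\varXi=\sum_{k\ge 1}\psi_k$ and then sums, whereas you apply it directly to $\varXi$; both routes land on the same estimate after absorbing absolute constants into $C_\cU$ (which the paper also does explicitly).
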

\begin{proof}
We recall that
\begin{equation*}
	\varXi(q,z,0) = \sum_{k=1}^\infty \psi_k(q,z,0),
\end{equation*}
where the terms of this series are defined in the proof of Lemma~3.1 in \cite{toluri-2022}
and the convergence is uniform on compact subsets of $\C$. Therefore,
\begin{equation*}
	\partial_z^n\varXi(q,z,0) = \sum_{k=1}^\infty \partial_z^n\psi_k(q,z,0).
\end{equation*}
For suitable $r(z)>0$,
\begin{equation*}
	\partial_z^n\psi_k(q,z,0)
		= \frac{n!}{2\pi r(z)^j}\int_0^{2\pi}\psi_k(q,z+r(z)e^{it},0)e^{-int}dt,
\end{equation*}
hence
\begin{equation*}
\abs{\partial_z^n\psi_k(q,z,0)}
	\le \frac{n!}{r(z)^n} \max_{t\in[0,2\pi]}\abs{\psi_k(q,z+r(z)e^{it},0)}.
\end{equation*}
Moreover,
\begin{equation*}
\abs{\psi_k(q,w,0)}
	\le \frac{4^k}{k!}C_0^{2k+1} \frac{g_A(-w)}{\sigma(w)} \omega(q,w)^k.
\end{equation*}
From now on we assume $\abs{z}\ge 2$ and set $r(z) = \abs{z}^{-1/2}$. Noting that
\begin{equation*}
\sigma(z - \abs{z}^{-1/2})
	\le \sigma(z + \abs{z}^{-1/2} e^{it})
	\le \sigma(z + \abs{z}^{-1/2}),
\quad
\Omega_r(z + \abs{z}^{-1/2} e^{it})
	\le \Omega_r(z - \abs{z}^{-1/2}),
\end{equation*}
and recalling Lemma~\ref{lem:another-bound-on-gA}, we obtain
\begin{equation*}
\abs{\partial_z^n\psi_k(q,\lambda,0)}
	\le 4^k \frac{n!}{k!} C_A C_0^{2k+1} C_\cU^k
		\frac{\abs{z}^{n/2}}{\sigma(z - \abs{z}^{-1/2})}g_A(-z)
		\Omega_r(z - \abs{z}^{-1/2})^k.
\end{equation*}
Then, \eqref{eq:derivatives-Xi} follows immediately (after redefining the constant $C_\cU$). The proof of
\eqref{eq:derivatives-Xi-prime} is nearly identical, hence omitted.
\end{proof}

\begin{remark}
\label{rem:some-asymptotics}
As a consequence, we have
	\begin{gather*}
		\varXi_n = \bO(n^{-1/6}\om_r(n)),
		\quad
		\dot{\varXi}_n = \bO(n^{1/6}\om_r(n)),
		\quad
		\ddot{\varXi}_n = \bO(n^{1/2}\om_r(n)),
		\\
		\varXi_n' = \bO(n^{1/6}\om_r(n)),
		\quad
		\dot{\varXi}_n' = \bO(n^{1/2}\om_r(n)),
		\quad
		\ddot{\varXi}_n' = \bO(n^{5/6}\om_r(n)),
	\end{gather*}
uniformly for $(q,b)$ on bounded subsets of $\bsA_r\times\R$.
\end{remark}

\begin{remark}
\label{rem:refinement}
Let us recall the following identities, introduced in \cite{toluri-2022},
\begin{equation*}
\varXi(q,z,x) = \psi_1(q,z,x) + \varXi^{(2)}(q,z,x),
\end{equation*}
where
\begin{equation*}
\psi_1(q,z,x)
	\defeq - \int_x^\infty J_0(z,x,y)\psi_0(z,y)q(y)dy
\end{equation*}
and
\begin{equation*}
\abs{\varXi^{(2)}(q,z,x)}
	\le C \om^2(q,z) e^{C \om(q,z)}\frac{g_A(x-z)}{\sigma(x-z)}.
\end{equation*}
Also,
\begin{equation}
\label{eq:recall-1}
	\varXi'(q,z,x) = \psi_1'(q,z,x) + \varXi^{(2)\prime}(q,z,x),
\end{equation}
where
\begin{equation*}
	\psi_1'(q,\lambda,x)
	= - \int_x^\infty \partial_xJ_0(z,x,y)\psi_0(z,y)q(y)dy
\end{equation*}
and
\begin{equation*}
\abs{\varXi^{(2)\prime}(q,z,x)}
	\le C \om^2(q,z) e^{C \om(q,z)} \sigma(x-z) g_A(x-z).
\end{equation*}
The estimates above hold true if $q\in\sA_r^\C$. Now suppose $q\in\bsA_r^\C$. Then,
\begin{equation*}
\dot{\varXi}(q,z,x) = - \psi_1'(q,z,x) + \dot{\psi}_1^\text{res}(q,z,x) + \dot{\varXi}^{(2)}(q,z,x),
\end{equation*}
where
\begin{equation*}
\dot{\psi}_1^\text{res}(q,z,x)
	\defeq - \int_x^\infty J_0(z,x,y)\psi_0(z,y)q'(y)dy,
\end{equation*}
and
\begin{equation*}
\abs{\dot{\varXi}^{(2)}(q,z,x)}
	\le C \bom^2(q,z) e^{C \bom(q,z)} \sigma(x-z) g_A(x-z).
\end{equation*}
Moreover,
\begin{equation}
\label{eq:recall-2}
\dot{\varXi}'(q,z,x)
	= - \psi_0(z,x)q(x) + z\psi_1(q,z,x)
	+ \dot{\psi'}_1^\text{res}(q,z,x)
	+ \dot{\varXi'}^{(2)}(q,z,x),
\end{equation}
where
\begin{equation*}
\dot{\psi'}_1^\text{res}(q,z,x)
	= -\int_x^\infty\partial_xJ_0(z,x,y)\psi_0(z,y)q'(y)dy.
\end{equation*}
Furthermore,
\begin{equation*}
\ddot{\varXi}(q,z,x)
	= \psi_0(z,x)q(x) - z \psi_1(q,z,x) - \dot{\psi'}_1^\text{res}(q,z,x)
	+ \partial_z\dot{\psi}_1^\text{res}(q,z,x) + \ddot{\varXi}^{(2)}(q,z,x).
\end{equation*}
Finally,
\begin{align*}
\ddot{\varXi'}(q,z,x)
	=&\, \psi'_0(z,x)q(x) + \psi_1(q,z,x)
	\\
	 & - z\psi'_1(q,z,x) + z \dot{\psi}_1^\text{res}(q,z,x)
	   + \partial_z\dot{\psi'}_1^\text{res}(q,z,x) + \ddot{\varXi'}^{(2)}(q,z,x).\qedhere
\end{align*}
\end{remark}

\begin{remark}
\label{rem:As-and-Bs}
Assume $q\in\bsA_r$ and define
\begin{gather*}
P_n \defeq \int_{0}^{\infty}\psi_0(\lambda_n,x)^2q(x) dx,
\quad
Q_n \defeq \int_{0}^{\infty}\psi_0(\lambda_n,x)\theta_0(\lambda_n,x)q(x) dx,
\\
P'_n \defeq \int_{0}^{\infty}\psi_0(\lambda_n,x)^2q'(x) dx,
\quad
Q'_n \defeq \int_{0}^{\infty}\psi_0(\lambda_n,x)\theta_0(\lambda_n,x)q'(x) dx.
\end{gather*}
Clearly all these expressions are $\bO(\om_r(n))$. Moreover,
\begin{gather*}
\psi_{1,n} = - \beta_n P_n + \alpha_n Q_n,\quad
\psi'_{1,n} = - \beta'_n P_n + \alpha'_n Q_n,
\\
\dot{\psi}_{1,n}^\text{res} = - \beta_n P'_n + \alpha_n Q'_n,\quad
\dot{\psi'}_{1,n}^\text{res} = - \beta'_n P'_n + \alpha'_n Q'_n.\qedhere
\end{gather*}
\end{remark}

\begin{remark}
\label{rem:more-of-the-same}
	The proof of Lemma~\ref{lem:z-derivatives} also yields the inequalities
	\begin{equation*}
	\abs{\partial_z^n\varXi^{(2)}(q,z,0)}
	\le  n!C_\cU e^{C_\cU\Omega_r(z - \abs{z}^{-1/2})}
		\frac{\abs{z}^{n/2}}{\sigma(z - \abs{z}^{-1/2})}
			\Omega_r^2(z - \abs{z}^{-1/2})g_A(-z)
	\end{equation*}
	and
	\begin{equation*}
	\abs{\partial_z^n\varXi'^{(2)}(q,z,0)}
	\le n!C_\cU e^{C_\cU\Omega_r(z - \abs{z}^{-1/2})}
		\abs{z}^{n/2}\sigma(z + \abs{z}^{-1/2})
		\Omega_r^2(z - \abs{z}^{-1/2})g_A(-z).
	\end{equation*}
	Recalling that $\varXi_n^{(2)}=\varXi_n^{(2)}(q,\lambda_{n}(q,b),0)$ et cetera,
	these inequalities implies
	\begin{gather*}
		\varXi_n^{(2)} = \bO(n^{-1/6}\om^2_r(n)),
		\quad
		\dot{\varXi}_n^{(2)} = \bO(n^{1/6}\om^2_r(n)),
		\quad
		\ddot{\varXi}_n^{(2)} = \bO(n^{1/2}\om^2_r(n)),
		\\
		\varXi_n'^{(2)} = \bO(n^{1/6}\om^2_r(n)),
		\quad
		\dot{\varXi}_n'^{(2)} = \bO(n^{1/2}\om^2_r(n)),
		\quad
		\ddot{\varXi}_n'^{(2)} = \bO(n^{5/6}\om^2_r(n)),
	\end{gather*}
uniformly for $(q,b)$ on bounded subsets of $\bsA_r\times\R$.
\end{remark}

\begin{remark}
\label{rem:derivatives-psi-1}
By comparing $\dot{\psi}_1^\text{res}$ and $\dot{\psi'}_1^\text{res}$ with
$\psi_1$ and $\psi'_1$, respectively, we obtain the estimates
\begin{equation*}
\abs{\partial_z^n\dot{\psi}_1^\text{res}(q,\lambda,0)}
	\le n! C_\cV
		\frac{\abs{z}^{n/2}}{\sigma(z - \abs{z}^{-1/2})}\Omega_r(z - \abs{z}^{-1/2})g_A(-z).
\end{equation*}
and
\begin{equation*}
	\abs{\partial_z^n\dot{\psi'}_1^\text{res}(q,\lambda,0)}
	\le n! C_\cV
	\abs{z}^{n/2}\sigma(z - \abs{z}^{-1/2})
	\Omega_r(z - \abs{z}^{-1/2})g_A(-z),
\end{equation*}
but now assuming $q\in\cV$, a bounded subset of $\bsA_r$. Consequently,
\begin{equation*}
\partial_z\dot{\psi}_{1,n}^\text{res}
	= \bO(n^{1/6}\om_r(n))
\quad\text{and}\quad
\partial_z\dot{\psi'}_{1,n}^\text{res}
	= \bO(n^{1/2}\om_r(n)),
\end{equation*}
uniformly on bounded subsets of $\bsA_r\times\R$.
\end{remark}

\begin{remark}
\label{rem:the-end}
For the sake of brevity, let us just suppose $q\in\bsA_r^\C$. We have the following identities:
\begin{align*}
\phi(q,b,z,x)
	=&\, \phi_0(b,z,x) + \phi_1(q,b,z,x) + \varPhi^{(2)}(q,b,z,x),
	\\
\phi'(q,b,z,x)
	=&\, \phi'_0(b,z,x) + \phi'_1(q,b,z,x) + {\varPhi}^{(2)\prime}(q,b,z,x)
\intertext{and}
\dot{\phi}(q,b,z,x)
	=&\, \phi^\times_0(b,z,x) - \phi_0'(b,z,x) + s_0(z,x) q(0) + \phi^\times_1(q,b,z,x) - \phi_1'(q,b,z,x)
	\\
	 & + \dot{\phi}^\text{res}_1(q,b,z,x) + \dot{\varPhi}^{(2)}(q,b,z,x),
\end{align*}
where
\begin{align*}
\phi_0^\times(b,z,x)	&= -zs_0(z,x) + b c_0(z,x),
	\\
\phi_1(q,b,z,x) 		&= \int_0^x J_0(z,x,y)\phi_0(b,y)q(y) dy,
	\\
\phi^\times_1(q,b,z,x)	&= \int_0^x J_0(z,x,y)\phi^\times_0(b,y)q(y) dy
\intertext{and}
\dot{\phi}^\text{res}_1(q,b,z,x) &= \int_0^x J_0(z,x,y)\phi_0(b,y)q'(y) dy.
\end{align*}
Besides,
\begin{equation*}
\varPhi^{(2)}(q,b,z,x)
	= \varUpsilon_c^{(2)}(q,z,x) + b \varUpsilon_s^{(2)}(q,z,x),
\end{equation*}
where $\varUpsilon_c^{(2)}$ and $\varUpsilon_s^{(2)}$ are defined in Remark~3.11 of \cite{toluri-2022}.
Analogous definitions hold for $\varPhi^{(2)\prime}$ and $\dot{\varPhi}^{(2)}$.
We note that
\begin{equation}
\label{eq:phi-1}
\abs{\phi_1(q,b,z,x)}
	\le C(1+\abs{b}) \bom(q,z) \frac{\sigma(z)}{\sigma(x-z)}\ch(z,x)
\end{equation}
and the same bound holds for $\dot{\phi}^\text{res}_1$. Also, Remark~3.11 of \cite{toluri-2022}
implies the following bounds:
\begin{gather}
\abs{\varPhi^{(2)}(q,b,z,x)}
	\le C(1+\abs{b})\bom^2(q,z)e^{C\bom(q,z)}
		\left(\sigma(z) + \frac{1}{\sigma(z)}\right)\frac{\ch(z,x)}{\sigma(x-z)},\nonumber
\\
\abs{\varPhi^{(2)\prime}(q,b,z,x)}
	\le C(1+\abs{b})\bom^2(q,z)e^{C\bom(q,z)}
		\left(\frac{\sigma(z)}{\sigma(x-z)} + \frac{\sigma(x-z)}{\sigma(z)}\right)\ch(z,x)\nonumber
\intertext{and}
\label{eq:Phi-dot-2}
\abs{\dot{\varPhi}^{(2)}(q,b,z,x)}
	\le C(1+\abs{b})\bom^2(q,z) e^{C\bom(q,z)}
		\left(\sigma(z)\sigma(x-z) + \frac{\abs{q(0)} + \abs{z}}{\sigma(z)\sigma(x-z)}\right)\ch(z,x).
\end{gather}
No further comment is required.
\end{remark}



\begin{thebibliography}{xx}

\bibitem{chelkak1}
	D. Chelkak, P. Kargaev and E. Korotyaev,
	{\em An inverse problem for an harmonic oscillator perturbed by
	potential: uniqueness},
	Lett. Math. Phys. {\bf 64} (2003), 7--21.

\bibitem{chelkak2}
	D. Chelkak, P. Kargaev and E. Korotyaev,
	{\em Inverse problem for harmonic oscillator perturbed by
	potential, characterization},
	Comm. Math. Phys. {\bf 249} (2004), 133--196.

\bibitem{chelkak2.5}
	D. Chelkak and E. Korotyaev,
	{\em The inverse problem for perturbed harmonic oscillator
	on the half-line},
	Institut Mittag–Leffler, Preprint No. 10, 2005/2006 fall.

\bibitem{chelkak3}
	D. Chelkak and E. Korotyaev,
	{\em The inverse problem for perturbed harmonic oscillator on the
	half-line with a {D}irichlet boundary condition},
	Ann. Henri Poincar{\'e} {\bf 8} (2007), 1115--1150.

\bibitem{dahlber}
	Björn E. J. Dahlberg and E. Trubowitz,
	{\em The inverse Sturm-Liouville problem. III},
	Comm. Pure Appl. Math. {\bf 37} (1984), 255--267.

\bibitem{gesztesy}
	F. Gesztesy and B. Simon,
	{\em Uniqueness theorems in inverse spectral theory for one-dimensional
	Schr\"odinger operators},
	Trans. Amer. Math. Soc. {\bf 348} (1996), 349--373.

\bibitem{isaacson-2}
	E. L. Isaacson, H. P. McKean and E. Trubowitz,
	{\em The inverse Sturm-Liouville problem. II},
	Comm. Pure Appl. Math. {\bf 37} (1984), 1--11.

\bibitem{isaacson-1}
	E. L. Isaacson and E. Trubowitz,
	{\em The inverse Sturm-Liouville problem. I},
	Comm. Pure Appl. Math. {\bf 36} (1983), 767--783.

\bibitem{lang}
	S. Lang,
	{\em Real and Functional Analysis} (Third Edition),
	Graduate Texts in Mathematics 142, Springer-Verlag Berlin Heidelberg, 1993.

\bibitem{nist}
	F. W. J. Olver, D. W. Lozier, R. F. Boisvert and
	C. W. Clark,
	{\em NIST Handbook of Mathematical Functions},
	U.S. Department of Commerce National Institute of Standards
	and Technology, Washington D.C., 2010.

\bibitem{poeschel}
	J. Pöschel and E. Trubowitz,
	{\em Inverse Spectral Theory},
	Pure and Applied Mathematics vol. 130, Academic Press, Boston, 1987.

\bibitem{toluri-2022}
	J. H. Toloza and A. Uribe,
	{\em The Dirichlet problem for perturbed Stark operators in the half-line},
	Anal. Math. Phys. {\bf 13} (2023), 8 (40pp).

\bibitem{weidmann}
	J. Weidmann,
	{\em Spectral Theory of Ordinary Differential Operators},
	Lecture Notes in Mathematics {\bf 1258}, Springer, Berlin, 1987.

\end{thebibliography}
\end{document}